\newcommand{\E}{\mathbb{E}}
\newcommand{\PP}{\mathbb{P}}
\newcommand{\RR}{\mathbb{R}}
\newcommand{\tabincell}[2]{\begin{tabular}{@{}#1@{}}#2\end{tabular}}
\def\[{\langle} \def\]{\rangle}
\newdefinition{lemma}{Lemma}[section]
\newdefinition{theorem}{Theorem}[section]
\newdefinition{definition}{Definition}[section]
\newdefinition{assumption}{Assumption}[section]
\newdefinition{proposition}{Proposition}[section]
\newdefinition{remark}{Remark}[section]
\newdefinition{claim}{Claim}[section]
\newdefinition{corollary}{Corollary}[section]
\newdefinition{example}{Example}[section]
\begin{document}
	\newpage
	\begin{frontmatter}
		\setcounter{page}{1}
		\title{Strong convergence rate of the positivity-preserving logarithmic truncated EM method for multi-dimensional stochastic differential equations with positive solutions$^{\star}$}
		\author{Xingwei Hu$^{a}$,~~Xinjie Dai$^b$,~~Aiguo Xiao$^{a,*}$}
		\footnote{\textsuperscript{$\star$} This work is supported by National Natural Science Foundation of China (Nos.12471391 and 12401547) and the Postgraduate Innovation Foundations of Hunan Province (No. CX20250930) and Xiangtan University (No. XDCX2025Y185). X. Hu is supported by the China Scholarship Council under 202508430056.\\
		 \indent \indent \textsuperscript{$*$} Corresponding author (Aiguo Xiao).\\
			\emph{Email addresses}: \texttt{xingweihu@smail.xtu.edu.cn} (X.\ Hu), 
			\texttt{dxj@ynu.edu.cn} (X.\ Dai),
			\texttt{xag@xtu.edu.cn} (A.\ Xiao).}
		\address{$^a$School of Mathematics and Computational Science $\&$ Hunan Key Laboratory for Computation and Simulation in Science and Engineering, Xiangtan University, Xiangtan, Hunan 411105, China \\
			$^b$School of Mathematics and Statistics, Yunnan University, Kunming, Yunnan 650504, China}
		\date{}

		\begin{abstract}
			\par
			As a combination of the logarithmic transformation with the truncated Euler--Maruyama (TEM) scheme, the positivity-preserving logarithmic truncated Euler--Maruyama (LTEM) scheme has been generally developed for scalar stochastic differential equations (SDEs) with positive solutions. A subsequent question arises: can this method be extended to effectively solve general multi-dimensional SDEs with positive solutions? The answer to this question is affirmative. In this paper, we construct the positivity-preserving LTEM scheme to solve this type of system and demonstrate its suboptimal strong convergence rate of this scheme. On the other hand, when the underlying system degenerates into a scalar equation, the latest LTEM scheme analyzed by Tang \& Mao (2024) is applicable to scalar SDEs with weak conditions, but its strong convergence rate is suboptimal. Based on this, we will theoretically demonstrate the optimal convergence rate of the LTEM method without infinitesimal factors in the scalar case. The proof strategy exactly improves its convergence rate from suboptimal to optimal. Finally, Numerical examples are provided to validate the effectiveness and positivity-preserving of the LTEM method.
		\end{abstract}
		
	\end{frontmatter}
	
	\section{Introduction}
	\label{sec.1}
	\indent In 2015, Mao \cite{M15} introduced the TEM method for multi-dimensional nonlinear SDEs and set up strong convergence findings without specifying the corresponding convergence rate. In 2016, Mao \cite{M16} delved deeper into its convergence rate and demonstrated that it exhibited a suboptimal convergence rate under certain additional conditions. The combination of this method and logarithmic transformation gives rise to the positivity-preserving LTEM method, which has been developed and analyzed in \cite{LG2023,TX2024,PPlogTM} for scalar SDEs with positive solutions. Specifically, its strong convergence rates are suboptimal and optimal in \cite{PPlogTM} and \cite{LG2023}, respectively. A novel first-order positivity-preserving methods is further proposed in \cite{HXW}. Tang \& Mao \cite{TX2024} conducted deeper research on the LTEM method under weaker conditions, revealing its suboptimal strong convergence rate. Moreover, several positivity-preserving methods \cite{CJI2016,LNX2025,NS2014} are developed using the Lamperti transformation and modified EM methods for the scalar SDEs with positive solutions. \\
	\indent All the aforementioned literature is excellent; however, it is only limited to the scalar case. A natural follow-up question is whether the LTEM method can be extended to solve general multi-dimensional SDEs with positive solutions. Notably, when dealing with multi-dimensional systems, using Lamperti or logarithmic transformations may render the general monotonicity condition inadequate for the transformed SDEs, such that analyzing the convergence rate becomes a challenge when transformations are applied. Nevertheless, our answer to the question above is affirmative. Therefore, our primary goal of this paper is to extend the LTEM method to multi-dimensional scenarios and demonstrate its suboptimal strong convergence rate.\\
	\indent However, for multi-dimensional positivity-preserving schemes, there exist some results.
	For examples, for stochastic Lotka-Volterra (LV) competition model, the novel positivity-preserving TEM method and the positivity-preserving Lamperti-type EM scheme with the optimal strong convergence are proposed by  Mao, Wei \& Wiriyakraikul \cite{MF21} and Li \& Cao \cite{APPL2023}, respectively. Besides, Cai, Guo \& Mao \cite{YQX2024} proposed a positivity-preserving TEM method for multi-dimensional superlinear
	SDEs with positive solutions. Lastly, Cai, Mao \& Fei \cite{YXF2024} demonstrated that an exponential EM scheme exhibits the suboptimal strong convergence rate for multi-dimensional stochastic Kolmogorov equations. Hu, Dai \& Xiao \cite{arxivhu} proposed a positivity-preserving TEM method and demonstrated its optimal strong convergence rate for general stochastic systems yielding positive solutions. \\
	\indent On the other hand, another goal of this paper is to demonstrate the optimal strong convergence rate of the LTEM method when the dimension $d$ of \eqref{sde} equals to $1$. In \cite{TX2024}, the expressions $\sup_{ s \in[0,T]}\E[|y_\Delta(s)-\bar{y}_\Delta(s)|^p]$ and $\sup_{ s \in[0,T]}\E\big[\big|\frac{y_\Delta(s)}{\bar{y}_\Delta(s)}-1\big|^p\big]$ are estimated by $C\Delta^\frac{p}{2}(\eta(\Delta))^p$. The employment of these estimations in the convergence theory yields the suboptimal strong convergence rate. To theoretically achieve the optimal convergence rate, we need to re-evaluate these expressions. A detailed process involves estimating that $|\lambda_\Delta(\bar{z}(v))|$ and $|\sigma_\Delta(\bar{z}(v))|$ ($v\in[0,T]$) less than or equal to $C$ using certain assumptions along with the moment bound of the numerical solutions, instead of $\eta(\Delta)$. Based on this, one can successfully derive those estimations as $C\Delta^{\frac{p}{2}}$, rather than $ C\Delta^{\frac{p}{2}}(\eta(\Delta))^p$. This proof strategy effectively enhances the convergence rate. \\
	\indent This work makes two main contributions:
	\begin{itemize}
		\item We study the positivity-preserving LTEM method for approximating SDE (1). The LTEM method achieves the suboptimal strong convergence rate. Numerical examples verify the positivity-preserving properties and effectiveness of the method.
		\item The latest LTEM method \cite{TX2024} exhibits the suboptimal convergence rate. We optimize the strong convergence rate of the LTEM method for the scalar SDEs with positive solutions, improving it from suboptimal to optimal.  
	\end{itemize}
	
	\indent This paper is organized as follows. In Section 2, we outline some notations, introduce several important lemmas and construct the LTEM method. In the next section, we demonstrate the strong convergence analysis of the LTEM method. In Section 4, we show the optimal strong convergence rate of the LTEM method in the scalar case. In Section 5, we exhibit various numerical experiments in support of our theoretical conclusions. Finally, we briefly conclude our work.
	
	\section{Multi-dimensional case and the LTEM method}
	\label{sec.2}
	
	\subsection{Notations and important lemmas}
	\par $G^T$ stands for transposition of a vector or matrix $G$. Define $\mathbb{E}$ as the expectation with respect to $\PP$. The positive cone in $\RR^d$ is denoted by $\RR_+^d$, which is defined as $\RR_+^d = \{ y \in \RR^d : y_i > 0 \text{ for all} 1 \leq i \leq d \}$. For any set $A$, its indicator function is denoted by $I_A$, defined as $I_A(x) = 1$ if $x \in A$ and $0$ otherwise. If $B$ is a matrix, we define its trace norm as $|B| = \sqrt{\text{trace}(B^T B)}$. For a vector $x \in \RR^d$, the notation $|x|$ refers to the Euclidean norm. Set $m \wedge n=\min\left\{m,n\right\}$ and $m \vee n=\max\left\{m,n\right\}$, where $m$ and $n$ are real numbers. For any vectors $U=(U_1,U_2,\cdots,U_d),V=(V_1,V_2,\cdots,V_d)\in\RR^d$, $UV=(U_1V_1,U_2,V_2,\cdots,U_dV_d)$ and $\frac{U}{V}=(\frac{U_1}{V_1},\frac{U_2}{V_2},\cdots,\frac{U_d}{V_d})$. We denote $C$ as a positive constant independent of $\Delta$ (step size, see in Section 2.2) that can vary in different contexts. \\
	\indent Consider a $d$-dimensional SDE with positive solutions
	\begin{equation}\label{sde}
		\mathrm{d}y(t)=\lambda(y(t))\mathrm{d}t+\sigma(y(t))\mathrm{d}B(t),\quad 0<t\leq T, \quad y(0)=y_0\in \RR^d_+,
	\end{equation}
	where $\lambda=(\lambda^1,\lambda^2,\cdots,\lambda^d)^T:\RR_+^d\rightarrow \RR^d$ and $\sigma=(\sigma^{i,j})_{d\times m}=(\sigma^1,\sigma^2,\cdots,\sigma^m)=(\sigma^T_1,\sigma^T_2,\cdots,\sigma^T_d)^T:\RR_+^d\rightarrow \RR^{d\times m}$.\\
	\indent We posit the subsequent hypothesis to ensure that the multi-dimensional SDE \eqref{sde} admits a unique global solution with values in $\mathbb{R}^d_+$. 
	\begin{assumption}\label{as.1}
		The coefficients $\lambda$ and $\sigma$ of \eqref{sde} satisfy the non-globally Lipschitz condition: there exist certain positive constants $L_1$, $\alpha$ and $\beta$ such that the inequality
		\begin{flalign*}
			|\lambda(\tilde{y})-\lambda(\hat{y})|\vee|\sigma(\tilde{y})-\sigma(\hat{y})|\leq L_1(1+|\tilde{y}|^\alpha+|\hat{y}|^\alpha+|\tilde{y}|^{-\beta}+|\hat{y}|^{-\beta})|\tilde{y}-\hat{y}|
		\end{flalign*} 
		holds for all $\tilde{y},\hat{y}\in \RR^d_+$. Besides, there exist some positive constants $y_i^*,H^i,K$ along with $J>1$ such that for any $\check{y}=(\check{y}_1,\check{y}_2,\cdots,\check{y}_d)^T\in\RR^d_+$ and any $i\in\{1,2,\cdots,d\}$,
		\begin{flalign*}
			\left\{ 
			\begin{aligned}
				&\check{y}_i\lambda^i(\check{y})-\frac{K+1}{2}|\sigma_i(\check{y})|^2\geq0, &\check{y}_i&\in(0,y_i^*);\\
				&\check{y}_i\lambda^i(\check{y})+\frac{J-1}{2}|\sigma_i(\check{y})|^2\leq H^i(1+\check{y}_i^2), &\check{y}_i&\in[y_i^*,\infty).
			\end{aligned}
			\right.
		\end{flalign*}
	\end{assumption}
	
	\begin{remark}\label{Remark1}
		As noted in Remark 2.1 in \cite{arxivhu}, we may deduce from Assumption \ref{as.1} that
		\begin{flalign*}
			|\lambda(y)|\vee |\sigma(y)|\leq C(1+|y|^{\alpha+1}+|y|^{-\beta}),\quad \forall y\in\RR^d_+.
		\end{flalign*}
	\end{remark}
	\begin{lemma}(Lemma 2.1 in \cite{arxivhu})\label{Lm2.1}
		Let Assumption \ref{as.1} hold with the parameters satisfying $J\geq2(\alpha+1)$ and $K\geq2\beta$. Then SDE \eqref{sde} admits a unique global positive solution, i.e.,  
		\begin{flalign*}
			\PP(y(t)\in\RR^d_+, \quad\forall t\in[0,T])=1.
		\end{flalign*}
		Besides, 
		\begin{flalign*}
			\sup_{ t \in[0,T]}\E[|y(t)|^{J}]< \infty \quad \text{and} \quad \sup_{ t \in[0,T]}\E[|y(t)|^{-K}]< \infty.
		\end{flalign*}
	\end{lemma}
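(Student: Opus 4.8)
The plan is to run the classical Khasminskii-type Lyapunov argument: first obtain a unique local solution from the local Lipschitz structure, then introduce a Lyapunov function that simultaneously penalises blow-up to infinity and collapse onto the boundary $\partial\RR^d_+$, and finally promote the local solution to a global one carrying the stated moments. First I would observe that on any compact set $\{y\in\RR^d_+ : 1/n\le y_i\le n,\ i=1,\dots,d\}$ the factor $1+|\tilde y|^\alpha+|\hat y|^\alpha+|\tilde y|^{-\beta}+|\hat y|^{-\beta}$ in Assumption \ref{as.1} is bounded, so the non-globally Lipschitz condition degenerates to an ordinary local Lipschitz condition there. Standard SDE theory then yields a unique solution $y(t)$ up to the exit time $\tau_\infty=\lim_{n\to\infty}\tau_n$, where $\tau_n=\inf\{t\ge0 : \min_i y_i(t)\le 1/n \text{ or } |y(t)|\ge n\}$, and the task reduces to showing $\tau_\infty\ge T$ almost surely together with the moment propagation.

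The central object would be $V(y)=\sum_{i=1}^d\big(y_i^{J}+y_i^{-K}\big)$, which tends to $+\infty$ both as some $y_i\to0^+$ and as $|y|\to\infty$, and which dominates $|y|^{J}+|y|^{-K}$ up to constants (via $(\sum y_i^2)^{J/2}\le d^{(J-2)/2}\sum y_i^{J}$ for $J\ge2$ and $(\sum y_i^2)^{-K/2}\le\sum y_i^{-K}$). Applying Itô's formula componentwise gives, for the generator $\mathcal{L}$ associated with \eqref{sde},
\begin{equation*}
\mathcal{L}(y_i^{J})=J y_i^{J-2}\Big(y_i\lambda^i(y)+\tfrac{J-1}{2}|\sigma_i(y)|^2\Big),\qquad \mathcal{L}(y_i^{-K})=-K y_i^{-K-2}\Big(y_i\lambda^i(y)-\tfrac{K+1}{2}|\sigma_i(y)|^2\Big).
\end{equation*}

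The key step, and the main obstacle, is the a priori drift estimate $\mathcal{L}V(y)\le C(1+V(y))$ for all $y\in\RR^d_+$. I would split each coordinate into the two regimes of Assumption \ref{as.1}. On $\{y_i\ge y_i^*\}$ the second inequality bounds $\mathcal{L}(y_i^{J})$ by $JH^i(y_i^{J-2}+y_i^{J})\le C(1+y_i^{J})$; on $\{0<y_i<y_i^*\}$ the first inequality renders $\mathcal{L}(y_i^{-K})$ nonpositive, which is precisely what keeps $y_i$ off $0$. In the two complementary regimes no sign cancellation is available, so I would instead invoke the polynomial growth bound of Remark \ref{Remark1}: there the surviving contributions are of order $|y|^{2(\alpha+1)}$ and $|y|^{-2\beta}$ from $|\sigma_i|^2$, plus lower-order powers from $y_i\lambda^i$. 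The hypotheses $J\ge2(\alpha+1)$ and $K\ge2\beta$ are exactly what let Young's inequality absorb these into $C(1+|y|^{J}+|y|^{-K})\le C(1+V(y))$. The delicate point is genuinely multi-dimensional: because the growth bound is stated in the full norm $|y|$, when one coordinate approaches the boundary while others are large one must track how the coordinate-wise regimes interlock and verify that every power of $|y|$ produced is dominated by $V$ through the componentwise structure and the exponent matching.

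Finally I would assemble the pieces. Itô's formula applied to $V(y(t\wedge\tau_n))$, with the integrand bounded on $[0,\tau_n]$ so that the stochastic integral is a genuine martingale, combined with the drift estimate yields $\E[V(y(t\wedge\tau_n))]\le V(y_0)+C\int_0^t\big(1+\E[V(y(s\wedge\tau_n))]\big)\,ds$, and Gronwall's inequality produces a bound uniform in $n$. Since $V(y(\tau_n))\ge\min\{n^{J},n^{K}\}$ on $\{\tau_n\le T\}$, this uniform bound forces $\PP(\tau_n\le T)\to0$, hence $\tau_\infty\ge T$ a.s.; letting $n\to\infty$ shows the solution remains in $\RR^d_+$ throughout, i.e. $\PP(y(t)\in\RR^d_+,\ \forall t\in[0,T])=1$. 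Fatou's lemma applied to the uniform Gronwall estimate then gives $\sup_{t\in[0,T]}\E[V(y(t))]<\infty$, which encodes both $\sup_{t\in[0,T]}\E[|y(t)|^{J}]<\infty$ and $\sup_{t\in[0,T]}\E[|y(t)|^{-K}]<\infty$.
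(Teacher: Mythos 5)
Your proof is correct and is essentially the approach the paper relies on: the lemma itself is quoted from \cite{arxivhu} without proof, but the argument there---and the paper's own proof of the numerical counterpart, Lemma \ref{pp numerical integral}---is exactly this Khasminskii-type scheme (local solution up to the exit/blow-up time, a Lyapunov function blowing up at both $\partial\RR^d_+$ and infinity, the coordinate-wise regime split from Assumption \ref{as.1} combined with the polynomial bound of Remark \ref{Remark1} and the exponent conditions $J\geq 2(\alpha+1)$, $K\geq 2\beta$, then Gr\"onwall, a stopping-time/Chebyshev estimate, and Fatou). The only cosmetic difference is your componentwise Lyapunov function $\sum_{i=1}^d\big(y_i^{J}+y_i^{-K}\big)$ versus the paper's norm-based $|y|^{J}+|y|^{-K}$, which merely trades the cross terms $\sum_j\big(\sum_i y_i\sigma^{i,j}\big)^2$ in the It\^o computation for the norm-equivalence inequalities you state at the outset.
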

	
	\par To construct the LTEM method for SDE \eqref{sde}. Firstly, we consider $y_i(t)$, where $y_i(t)$ represents the element associated with the $i$-th index
	\begin{equation*}
		\mathrm{d}y_i(t)=\lambda^i(y(t))\mathrm{d}t+\sum_{j=1}^{m}\sigma^{i,j}(y(t))\mathrm{d}B_j(t).
	\end{equation*}
	Then for $1 \le i \le d$, by applying a logarithmic transformation $z_i(t) = \ln(y_i(t))$, one can obtain $y_i(t) = e^{z_i(t)}$. Integrating this with the It\^o formula yields its corresponding transformed SDE. 
	\begin{flalign*}
		\mathrm{d}z_i(t)=\Big(\frac{\lambda^i(e^{z(t)})}{e^{z_i(t)}}-\frac{1}{2}\sum_{j=1}^{m}\frac{|\sigma^{i,j}(e^{z(t)})|^2}{e^{2z_i(t)}}\Big)\mathrm{d}t+\sum_{j=1}^{m}\frac{\sigma^{i,j}(e^{z(t)})}{e^{z_i(t)}}\mathrm{d}B_j(t),\quad 1\leq i\leq d.
	\end{flalign*}
	Write its matrix formulation 
	\begin{equation}\label{tsde}
		\mathrm{d}z(t)=\tilde{\lambda}(z(t))\mathrm{d}t+\tilde{\sigma}(z(t))\mathrm{d}B(t).
	\end{equation}
	Here
	\begin{equation}\label{FGDD}
		\tilde{\lambda}(z)=e^{-z}\lambda(e^z)-\frac{1}{2}e^{-2z}|\sigma(e^z)|^2\quad \text{and}\quad \tilde{\sigma}(z)=e^{-z}\sigma(e^z)
	\end{equation}
	for $z\in\RR^d$, where $z(t)=(z_1(t),z_2(t),\cdots,z_d(t))^T$, $e^{z(t)}:=(e^{z_1(t)},e^{z_2(t)},\cdots,e^{z_d(t)})^T$ and $z(0)=\ln(y(0))=(\ln (y_1(0)),\ln (y_2(0)), \cdots,\ln (y_d(0)))^T$.
	\begin{remark}\label{tau*}
		Define an arbitrary stopping time $\rho^*_n$. By using Lemma \ref{Lm2.1}, one can see 
		\begin{flalign*}
			\sup_{ t \in[0,T]}\E[|y(t\wedge\rho^*_n)|^{J}]<\infty \quad \text{and} \quad \sup_{ t \in[0,T]}\E[|y(t\wedge\rho^*_n)|^{-K}]<\infty.
		\end{flalign*}
		Fixing  $\rho^*_n=\inf\{t\in[0,T]:|{z}(t)|\geq n\}$, we then deduce
		\begin{equation*}
			e^{(J\wedge K)n} \PP(\rho_n^*\leq T)=\E[(|y(\rho_n^*)|^{J}+|y(\rho_n^*)|^{-K})I_{\{\rho_n^*\leq T\}}]\leq\E[|y(T\wedge\rho^*_n)|^{J}+|y(T\wedge\rho^*_n)|^{-K}]\leq C.
		\end{equation*}	
		Therefore, we get
		\begin{flalign*}
			\PP(\rho_n^* \leq T)\leq \frac{C}{e^{(J\wedge K)n}}.
		\end{flalign*}
	\end{remark}
	
	\subsection{The LTEM method}
	\indent Now we construct our LTEM method for solving the SDE \eqref{sde}. To begin with, we select a strictly increasing continuous function $\psi:\RR_+\rightarrow\RR_+$, which satisfies $\psi(v)\rightarrow\infty$ as $v\rightarrow\infty$ along with
	\begin{equation*}
		\sup_{|z|\leq v} \big(|\tilde{\lambda}(z)| \vee |\tilde{\sigma}(z)|^2 \big) \leq \psi(v), \quad \forall v > 0.
	\end{equation*}
	Then we defined $\psi^{-1}$ as the inverse function of $\psi$, which has the property that $(\psi(0), \infty) \rightarrow (0,\infty)$ and is also increasing. Besides, we choose a strictly decreasing function $\eta:(0,1]\rightarrow (0, \infty)$ satisfying the following property 
	\begin{equation}\label{hD}
		\lim_{\Delta \rightarrow 0}\eta(\Delta) =\infty \quad \mbox{and} \quad  \Delta^{\frac{1}{2}} \eta(\Delta) \leq M_0,
	\end{equation}
	where $M_0\geq \psi(0)\vee1$. Fix $\Delta \in (0,1]$, let $\tilde{\lambda}_\Delta(x)$ and $\tilde{\sigma}_\Delta(x)$, referred as truncated functions, be defined as follows
	\begin{flalign*}
		\tilde{\lambda}_\Delta(z):=\left\{ 
		\begin{aligned}
			&\tilde{\lambda}\Big((|z|\wedge\psi^{-1}(\eta(\Delta)))\frac{z}{|z|}\Big), &z&\in\RR^d\setminus \{0\};\\
			&0, &z&=0
		\end{aligned}
		\right .
	\end{flalign*}
	and
	\begin{flalign*}
		\tilde{\sigma}_\Delta(z):=\left\{ 
		\begin{aligned}
			&\tilde{\sigma}\Big((|z|\wedge\psi^{-1}(\eta(\Delta)))\frac{z}{|z|}\Big), &z&\in\RR^d\setminus \{0\};\\
			&0, &z&=0. 
		\end{aligned}
		\right .
	\end{flalign*} 
	Clearly,
	\begin{equation}\label{bbound}
		|\tilde{\lambda}_\Delta(z)|\vee|\tilde{\sigma}_\Delta(z)|^2\leq \psi(\psi^{-1}(\eta(\Delta)))=\eta(\Delta).
	\end{equation}
	
	\indent We define a uniform mesh $\mathcal{T}_N: 0=t_0<t_1<\cdots<t_N=T$ with $t_k=k\Delta$, where $\Delta=\frac{T}{N}$ for $N\in \mathbb{N}^+$, where $\mathbb{N}^+$ denotes the ensemble of positive integers. Then the LTEM method generates numerical solutions $z_\Delta(t_k)$ to approximate $z(t_k)$ for $t_k=k\Delta$ (any given $\Delta\in(0,1]$), created by $z_\Delta(0) = z_0$ for $ k=0,1,\cdots,N-1$,
	\begin{flalign}\label{TEM}
		z_\Delta(t_{k+1})=z_\Delta(t_k)+\tilde{\lambda}_\Delta(z_\Delta(t_k))\Delta+\tilde{\sigma}_\Delta(z_\Delta(t_k))\Delta B_k,
	\end{flalign}
	where $\Delta B_k= B(t_{k+1})-B(t_k)$. The continuous form of \eqref{TEM} is defined as
	\begin{equation}\label{anoy}
		z_\Delta(t)=z_0+\int_{0}^{t}\tilde{\lambda}_\Delta(\bar{z}(s))\mathrm{d}s+\int_{0}^{t}\tilde{\sigma}_\Delta(\bar{z}(s))\mathrm{d}B(s),
	\end{equation}
	where $\bar{z}_\Delta(t)=z_\Delta(t_k)$ for $t \in [t_k,t_{k+1})$. Finally, the numerical solutions for the original SDE \eqref{sde} are defined as follows: 
	\begin{equation}\label{bxx}
		y_\Delta(t)=e^{z_\Delta(t)}
	\end{equation}
	for $t\in[0,T]$. The so-called LTEM method is combined \eqref{anoy} with \eqref{bxx}. 
	
	\begin{lemma}\label{multi}
		Suppose that $Q\sim N(0,\sqrt{\Delta}I_m)$ is an $m$-dimensional normal random variable, where $I_m$ is an $m$-order Identity matrix. Then for a real number $\gamma>0$, we can obtain
		\begin{flalign*}
			\E[e^{\gamma|Q|}]\leq2^me^{\frac{\gamma^2\Delta}{2}}.
		\end{flalign*}
	\end{lemma}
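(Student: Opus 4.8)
The plan is to reduce the $m$-dimensional estimate to a product of one-dimensional Gaussian moment generating functions, exploiting that the coordinates $Q_1,\dots,Q_m$ of $Q$ are mutually independent scalar Gaussian variables each of variance $\Delta$. First I would replace the Euclidean norm by the $\ell^1$-norm through the elementary comparison $|Q|\le\sum_{i=1}^m|Q_i|$ on $\RR^m$; since $\gamma>0$ and $v\mapsto e^{\gamma v}$ is increasing, exponentiating converts the sum in the exponent into a product, $e^{\gamma|Q|}\le\prod_{i=1}^m e^{\gamma|Q_i|}$. Taking expectations and using independence of the coordinates then factorizes the bound as $\E[e^{\gamma|Q|}]\le\prod_{i=1}^m\E[e^{\gamma|Q_i|}]$, so that everything is reduced to controlling a single scalar factor.

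For each scalar factor I would strip the absolute value with the two-sided inequality $e^{\gamma|x|}\le e^{\gamma x}+e^{-\gamma x}$, valid for all $x\in\RR$ and $\gamma>0$, giving $\E[e^{\gamma|Q_i|}]\le\E[e^{\gamma Q_i}]+\E[e^{-\gamma Q_i}]$. Each of the two remaining terms is a classical Gaussian moment generating function: completing the square in the Gaussian integral yields $\E[e^{\pm\gamma Q_i}]=e^{\gamma^2\Delta/2}$ because $Q_i$ has mean zero and variance $\Delta$. Hence $\E[e^{\gamma|Q_i|}]\le 2e^{\gamma^2\Delta/2}$, and multiplying the $m$ identical scalar estimates produces the combinatorial factor $2^m$ together with the accumulated Gaussian exponential factor, which is exactly the form of the asserted bound.

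The argument is essentially routine; the two points requiring mild care are the direction of the norm comparison (so that monotonicity of the exponential may legitimately be invoked) and the factorization of the expectation, which rests precisely on the diagonal structure $\sqrt{\Delta}I_m$ of the covariance that renders the coordinates independent. I expect the only genuine bookkeeping obstacle to be tracking how the one-dimensional exponential factor $e^{\gamma^2\Delta/2}$ accumulates across the $m$ independent coordinates when the $m$ scalar estimates are multiplied together.
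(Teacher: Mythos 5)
Your reduction is assembled from correct ingredients (the comparison $|Q|\le\sum_{i=1}^m|Q_i|$, independence of the coordinates, and the exact scalar bound $\E[e^{\gamma|Q_i|}]\le\E[e^{\gamma Q_i}]+\E[e^{-\gamma Q_i}]=2e^{\gamma^2\Delta/2}$), but the step you dismiss as ``bookkeeping'' is exactly where the argument fails to deliver the stated inequality: multiplying the $m$ identical scalar estimates gives
\begin{flalign*}
\E[e^{\gamma|Q|}]\le\prod_{i=1}^{m}\E\big[e^{\gamma|Q_i|}\big]\le 2^m e^{\frac{m\gamma^2\Delta}{2}},
\end{flalign*}
with an extra factor $m$ in the exponent, which is strictly weaker than the asserted $2^m e^{\gamma^2\Delta/2}$ for $m\ge2$. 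This gap cannot be closed by a cleverer argument, because the lemma as stated is false for $m\ge2$: writing the expectation in polar coordinates and completing the square exactly,
\begin{flalign*}
\E[e^{\gamma|Q|}]=\frac{c_m}{(2\pi\Delta)^{m/2}}\,e^{\frac{\gamma^2\Delta}{2}}\int_{0}^{\infty}r^{m-1}e^{-\frac{(r-\gamma\Delta)^2}{2\Delta}}\mathrm{d}r,
\end{flalign*}
where $c_m$ is the surface area of the unit sphere in $\RR^m$, and the last integral behaves like $(\gamma\Delta)^{m-1}\sqrt{2\pi\Delta}$ as $\gamma\to\infty$, so the left-hand side grows like $(\gamma\sqrt{\Delta})^{m-1}e^{\gamma^2\Delta/2}$ and eventually exceeds any fixed multiple of $e^{\gamma^2\Delta/2}$. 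Concretely, for $m=2$, $\Delta=1$, $\gamma=2$ one computes $\E[e^{\gamma|Q|}]=1+\gamma\sqrt{2\pi}\,\Phi(\gamma)e^{\gamma^2/2}\approx 37.2>29.6\approx 2^2e^{\gamma^2/2}$ (here $\Phi$ denotes the standard normal distribution function).

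You should know that the paper's own proof contains essentially the same error you deferred. After symmetrizing to the positive orthant, the paper treats $e^{\gamma|v|}e^{-|v|^2/(2\Delta)}=e^{\gamma^2\Delta/2}e^{-|v-\gamma\Delta|^2/(2\Delta)}$ as an identity, with $v-\gamma\Delta$ understood componentwise (as its subsequent substitution $u=(v-\gamma\Delta)/\sqrt{\Delta}$ makes clear); for $m\ge2$ this is not an identity but an inequality requiring precisely your comparison $|v|\le\sum_{i=1}^m v_i$ on $[0,\infty)^m$, after which the exponent correctly accumulates to $\frac{m\gamma^2\Delta}{2}$, exactly as in your computation. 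So your route and the paper's route, each executed honestly, prove the same true statement $\E[e^{\gamma|Q|}]\le 2^m e^{m\gamma^2\Delta/2}$, and this weaker bound suffices for the only place the lemma is used: in Lemma \ref{LM3.1} one has $\gamma^2(t-t_k)\le\bar{p}^2\eta(\Delta)\Delta\le\bar{p}^2M_0$ by \eqref{hD}, and $m$ is fixed, so $2^me^{m\gamma^2(t-t_k)/2}$ is still a constant $C_{\bar{p}}$. The correct resolution is therefore to restate the lemma with exponent $\frac{m\gamma^2\Delta}{2}$; with that change your proof is complete and is, modulo orthant symmetrization versus coordinatewise factorization, the argument the paper intended.
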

	\begin{proof}
		Since $Q\sim N(0,\sqrt{\Delta}I_m)$ is an $m$-dimensional normal random variable, we have
		\begin{flalign*}
			\E[e^{\gamma|Q|}]=&\int_{\RR^m}e^{\gamma|v|}\frac{1}{(2\pi\Delta)^{\frac{m}{2}}}e^{-\frac{|v|^2}{2\Delta}}\mathrm{d}v\\
			=&\frac{2^m}{(2\pi\Delta)^{\frac{m}{2}}}\int_{{[0,\infty)}^m}e^{\gamma|v|}e^{-\frac{|v|^2}{2\Delta}}\mathrm{d}v\\
			=&\Big(\frac{2}{\pi\Delta}\Big)^{\frac{m}{2}}e^{\frac{\gamma^2\Delta}{2}}\int_{{[0,\infty)}^m}e^{-\frac{|v-\gamma\Delta|^2}{2\Delta}}\mathrm{d}v\\
			=&\Big(\frac{2}{\pi}\Big)^{\frac{m}{2}}e^{\frac{\gamma^2\Delta}{2}}\int_{{[-\gamma\sqrt{\Delta},\infty)}^m}e^{-\frac{|u|^2}{2}}\mathrm{d}u\\
			\leq&\Big(\frac{2}{\pi}\Big)^{\frac{m}{2}}e^{\frac{\gamma^2\Delta}{2}}\int_{(-\infty,\infty)^m}e^{-\frac{|u|^2}{2}}\mathrm{d}u\\
			=&\Big(\frac{2}{\pi}\Big)^{\frac{m}{2}}e^{\frac{\gamma^2\Delta}{2}}\Big(2\int_{0}^{\infty}e^{-\frac{|z|^2}{2}}\mathrm{d}z\Big)^m\\
			\leq&\Big(\frac{2}{\pi}\Big)^{\frac{m}{2}}e^{\frac{\gamma^2\Delta}{2}}(2\pi)^{\frac{m}{2}}\\
			\leq&2^me^{\frac{\gamma^2\Delta}{2}}.
		\end{flalign*}
		We complete the proof.
	\end{proof}
	
	\indent Next, we will prove several properties of the numerical solutions.
	\begin{lemma}\label{LM3.1}
		For any real numbers $\bar{p}$ and $\bar{q}$, we get
		\begin{equation}\label{supsup}
			\sup_{\Delta\in (0,1]}\sup_{ t \in[0,T]}\E\Big[\Big|\frac{y_\Delta(t)}{\bar{y}_\Delta(t)}\Big|^{\bar{p}}\Big]\leq C_{\bar{p}}\quad\text{and}\quad \sup_{\Delta\in (0,1]}\sup_{ t \in[0,T]}\E\Big[\Big|\frac{\bar{y}_\Delta(t)}{y_\Delta(t)}\Big|^{\bar{q}}\Big]\leq C_{\bar{q}},
		\end{equation}
		where $C_{\bar{p}}$ and $C_{\bar{q}}$ are dependent on $\bar{p}$ and $\bar{q}$, respectively.
	\end{lemma}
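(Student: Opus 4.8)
The plan is to reduce both inequalities in \eqref{supsup} to a single uniform bound on the componentwise exponential moments of the one-step increment of the log-process. Fix $t\in[t_k,t_{k+1})$. Since $\bar{z}_\Delta$ is frozen at $z_\Delta(t_k)$ on $[t_k,t_{k+1})$, the continuous form \eqref{anoy} gives, for the $i$-th component,
\begin{equation*}
w_i := z_{\Delta,i}(t)-\bar{z}_{\Delta,i}(t)=\tilde{\lambda}_{\Delta}^{i}(z_\Delta(t_k))(t-t_k)+\sum_{j=1}^m\tilde{\sigma}_{\Delta}^{i,j}(z_\Delta(t_k))\big(B_j(t)-B_j(t_k)\big),
\end{equation*}
so that the $i$-th components of $y_\Delta(t)/\bar{y}_\Delta(t)$ and $\bar{y}_\Delta(t)/y_\Delta(t)$ equal $e^{w_i}$ and $e^{-w_i}$. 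Hence it suffices to prove $\sup_{\Delta\in(0,1]}\sup_{t\in[0,T]}\E[e^{r w_i}]\leq C_r$ for every real $r$ and every $i$, with $C_r$ independent of $\Delta$, $t$, $k$; the two directions in \eqref{supsup} then follow by taking $r$ of either sign.

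For the core estimate I would first dominate $|w_i|$ by the increment norm. Using that each component and each row Euclidean norm is controlled by the trace/Euclidean norm together with \eqref{bbound}, one has $|\tilde{\lambda}_{\Delta}^{i}(z_\Delta(t_k))|\leq\eta(\Delta)$ and $\big(\sum_{j}|\tilde{\sigma}_{\Delta}^{i,j}(z_\Delta(t_k))|^2\big)^{1/2}\leq\sqrt{\eta(\Delta)}$, whence by Cauchy--Schwarz
\begin{equation*}
e^{r w_i}\leq e^{|r|\,\eta(\Delta)(t-t_k)}\,e^{|r|\sqrt{\eta(\Delta)}\,|B(t)-B(t_k)|}.
\end{equation*}
Conditioning on $\mathcal{F}_{t_k}$ the coefficients become deterministic, and $B(t)-B(t_k)$ is an $m$-dimensional Gaussian of variance $t-t_k\le\Delta$, so Lemma \ref{multi} applies with $\gamma=|r|\sqrt{\eta(\Delta)}$ to yield $\E[e^{|r|\sqrt{\eta(\Delta)}|B(t)-B(t_k)|}\mid\mathcal{F}_{t_k}]\leq2^m e^{r^2\eta(\Delta)(t-t_k)/2}$. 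The decisive point is that, although $\eta(\Delta)\to\infty$ as $\Delta\to0$, property \eqref{hD} forces $\eta(\Delta)(t-t_k)\leq\eta(\Delta)\Delta\leq M_0\Delta^{1/2}\leq M_0$; thus $\E[e^{r w_i}]\leq 2^m\exp\big((|r|+r^2/2)M_0\big)=:C_r$, uniformly in $\Delta$, $t$, $k$, $i$.

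Finally I would transfer the componentwise moments to the stated norm-power bounds. For $\bar{p}>0$, write $\big|y_\Delta(t)/\bar{y}_\Delta(t)\big|^{\bar{p}}=\big(\sum_{i=1}^d e^{2w_i}\big)^{\bar{p}/2}$ and dominate it by $C_d\sum_{i=1}^d e^{\bar{p}w_i}$, using subadditivity of $x\mapsto x^{\bar{p}/2}$ when $\bar{p}\leq2$ and the power-mean inequality when $\bar{p}>2$; taking expectations and applying the core estimate gives the constant $C_{\bar{p}}$. For $\bar{p}<0$ the sum is bounded below by any single term $e^{2w_j}$, so $\big|y_\Delta(t)/\bar{y}_\Delta(t)\big|^{\bar{p}}\leq e^{\bar{p}w_j}$ and the core estimate applies directly. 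The second inequality in \eqref{supsup} is identical upon replacing $w_i$ by $-w_i$, which is already covered. I expect the only genuinely delicate point to be this uniform control of the exponential moment as $\Delta\to0$: one must exploit the exact balance $\Delta^{1/2}\eta(\Delta)\leq M_0$ so that the diverging truncation level is compensated by the vanishing step size, keeping the Gaussian bound of Lemma \ref{multi} finite; the reduction from the Euclidean norm to single components is routine bookkeeping.
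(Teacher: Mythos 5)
Your proposal is correct and follows essentially the same route as the paper: both express the ratio $y_\Delta(t)/\bar{y}_\Delta(t)$ as the exponential of the one-step increment of the log-process, bound the truncated coefficients via \eqref{bbound}, apply the Gaussian exponential-moment bound of Lemma \ref{multi}, and exploit $\Delta^{1/2}\eta(\Delta)\leq M_0$ from \eqref{hD} so that $\eta(\Delta)\Delta\leq M_0$ keeps the bound uniform in $\Delta$. Your componentwise reduction and the conditioning on $\mathcal{F}_{t_k}$ are just a more careful write-up of the step the paper performs directly in vector notation.
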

	\begin{proof}
		For any given $\Delta \in (0,1]$ and $0\leq t\leq T$, there exists a unique integer $k \geq 0$ such that $t_k \leq t < t_{k+1}$, thus we obtain from \eqref{anoy} and \eqref{bxx} that
		\begin{flalign}\label{12}
			y_\Delta(t)=\bar{y}_\Delta(t)e^{\tilde{\lambda}_\Delta(\bar{z}_\Delta(t))(t-t_k)+\tilde{\sigma}_\Delta(\bar{z}_\Delta(t))(B(t)-B(t_k))}.
		\end{flalign}
		Then by \eqref{hD}, \eqref{bbound} and Lemma \ref{multi}, we obtain
		\begin{flalign*}
			\E\Big[\Big|\frac{y_\Delta(t)}{\bar{y}_\Delta(t)}\Big|^{\bar{p}}\Big]=&\E e^{\bar{p}|\tilde{\lambda}_\Delta(\bar{z}_\Delta(t))(t-t_k)+\tilde{\sigma}_\Delta(\bar{z}_\Delta(t))(B(t)-B(t_k))|}\\
			\leq&\E e^{|\bar{p}|\eta(\Delta)\Delta+(\eta(\Delta))^{\frac{1}{2}}|\bar{p}||B(t)-B(t_k)|}\\
			\leq&2^me^{|\bar{p}|(\eta(\Delta))\Delta+\frac{\bar{p}^2\eta(\Delta)\Delta}{2}}\leq C_{\bar{p}},
		\end{flalign*}
		where $C_{\bar{p}}$ is dependent on $\bar{p}$. We rewrite \eqref{12} as the following equation
		\begin{flalign*}
			\frac{\bar{y}_\Delta(t)}{y_\Delta(t)}=e^{-(\tilde{\lambda}_\Delta(\bar{z}_\Delta(t))(t-t_k)+\tilde{\sigma}_\Delta(\bar{z}_\Delta(t))(B(t)-B(t_k)))}.
		\end{flalign*}
		Similarly, we can also derive
		\begin{flalign*}
			\E\Big[\Big|\frac{\bar{y}_\Delta(t)}{y_\Delta(t)}\Big|^{\bar{q}}\Big]\leq C_{\bar{q}},
		\end{flalign*}
		where $C_{\bar{q}}$ is dependent on $\bar{q}$.
	\end{proof}
	\begin{lemma}\label{pp numerical integral}
		Suppose Assumption \ref{as.1} holds, with its parameters satisfying $J\geq2(\alpha+1)$ and $K\geq2\beta$. Let $n>1$ be a positive number and define the stopping time $\rho_n=\inf\{t\in[0,T]:|{z}_\Delta(t)|\geq n\}$. Let $\Delta\in(0,1]$ be sufficiently small such that $\psi^{-1}(\eta(\Delta))\geq n$. Then the following holds that
		\begin{flalign}\label{numerical int}
			\sup_{\Delta\in (0,\Delta^*]}\sup_{ t \in[0,T]}\E[|y_\Delta(t)|^{J}]\leq C \quad \text{and} \quad \sup_{\Delta\in (0,\Delta^*]}\sup_{ t \in[0,T]}\E[|y_\Delta(t)|^{-K}]\leq C.
		\end{flalign}
	\end{lemma}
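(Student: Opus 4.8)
The plan is to localise with the stopping time $\rho_n$, derive a closed evolution equation for $y_\Delta=e^{z_\Delta}$ up to $\rho_n$, and then run a single Lyapunov/Gronwall argument on the combined functional $W(y):=\sum_{i=1}^{d}\left(y_i^{J}+y_i^{-K}\right)$, which is comparable to $|y|^{J}+|y|^{-K}$.

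First I would exploit the truncation geometry. For $s<\rho_n$ one has $|\bar z_\Delta(s)|<n\le\psi^{-1}(\eta(\Delta))$, so the truncation in $\tilde\lambda_\Delta,\tilde\sigma_\Delta$ is inactive and these coincide with $\tilde\lambda,\tilde\sigma$ of \eqref{FGDD} evaluated at $\bar z_\Delta$. Applying It\^o's formula to $y_{\Delta,i}(t)=e^{z_{\Delta,i}(t)}$ via \eqref{anoy} and substituting the explicit forms of $\tilde\lambda,\tilde\sigma$, the two It\^o corrections (one from the logarithmic transform, one from the power) combine so that, before $\rho_n$,
\begin{equation*}
	\mathrm{d}y_{\Delta,i}(t)=\frac{y_{\Delta,i}(t)}{\bar y_{\Delta,i}(t)}\lambda^i(\bar y_\Delta(t))\,\mathrm{d}t+\frac{y_{\Delta,i}(t)}{\bar y_{\Delta,i}(t)}\sigma_i(\bar y_\Delta(t))\,\mathrm{d}B(t),
\end{equation*}
with $\bar y_\Delta=e^{\bar z_\Delta}$. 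The point of this computation is that the coefficients of the original SDE reappear, so the very combinations $\bar y_{\Delta,i}\lambda^i+\tfrac{J-1}{2}|\sigma_i|^2$ and $\bar y_{\Delta,i}\lambda^i-\tfrac{K+1}{2}|\sigma_i|^2$ controlled by Assumption \ref{as.1} emerge when I differentiate $y_{\Delta,i}^{J}$ and $y_{\Delta,i}^{-K}$.

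Next I would apply It\^o to $W(y_\Delta(t\wedge\rho_n))$ and take expectations, so the martingale parts drop. For each $i$ I split the drift according to the two regimes of Assumption \ref{as.1}. On $\{\bar y_{\Delta,i}\ge y_i^*\}$ the $y_{\Delta,i}^{J}$-term is bounded, via the second coercivity inequality, by $Cy_{\Delta,i}^{J}$ after cancelling $\bar y_{\Delta,i}^{-2}(1+\bar y_{\Delta,i}^{2})\le C$; on $\{\bar y_{\Delta,i}<y_i^*\}$ the $y_{\Delta,i}^{-K}$-term is nonpositive by the first coercivity inequality and is discarded. The complementary pieces are handled by the polynomial growth of Remark \ref{Remark1}: in $\{\bar y_{\Delta,i}<y_i^*\}$ the factor $y_{\Delta,i}^{J}$ is of the form $(y_{\Delta,i}/\bar y_{\Delta,i})^{J}\bar y_{\Delta,i}^{J}$ with $\bar y_{\Delta,i}$ bounded above, and symmetrically in $\{\bar y_{\Delta,i}\ge y_i^*\}$ for $y_{\Delta,i}^{-K}$, so each reduces to a bounded-moment ratio (Lemma \ref{LM3.1}) multiplied by coefficient growth of orders at most $2(\alpha+1)$ and $2\beta$ in $|\bar y_\Delta|$. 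Using H\"older's inequality together with the all-order ratio bounds of Lemma \ref{LM3.1}, and the structural constraints $J\ge 2(\alpha+1)$ and $K\ge 2\beta$, all resulting moment factors stay within the span of $W$, i.e. of orders in $[-K,J]$. This yields
\begin{equation*}
	\E\big[W(y_\Delta(t\wedge\rho_n))\big]\le C+C\int_0^t\E\big[W(y_\Delta(s\wedge\rho_n))\big]\,\mathrm{d}s,
\end{equation*}
and Gronwall's inequality gives a bound uniform in both $n$ and the admissible $\Delta$.

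Finally I would remove the localisation. Chebyshev's inequality applied to the stopped bound gives $e^{(J\wedge K)n}\PP(\rho_n\le T)\le\E[W(y_\Delta(T\wedge\rho_n))]\le C$, hence $\PP(\rho_n\le T)\le Ce^{-(J\wedge K)n}$, mirroring Remark \ref{tau*} but for the numerical flow. Splitting $\E[|y_\Delta(t)|^{J}]=\E[\,\cdots I_{\{\rho_n>T\}}]+\E[\,\cdots I_{\{\rho_n\le T\}}]$, the first term is the Gronwall bound, while on $\{\rho_n\le T\}$ I would control the post-truncation increments using $|\tilde\lambda_\Delta|\vee|\tilde\sigma_\Delta|^2\le\eta(\Delta)$ from \eqref{bbound}, the exponential moment estimate of Lemma \ref{multi}, and the constraint $\Delta^{1/2}\eta(\Delta)\le M_0$ from \eqref{hD}, pairing the $\eta(\Delta)$-driven growth against the exponentially small probability by Cauchy--Schwarz. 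The main obstacle I anticipate is precisely this last balancing together with the coupling in the Gronwall step: because the small-value regime of the $J$-th moment generates negative-order growth terms and vice versa, the two bounds cannot be established in isolation and must be closed simultaneously, which is exactly where the sharp exponent constraints $J\ge 2(\alpha+1)$ and $K\ge 2\beta$ are consumed.
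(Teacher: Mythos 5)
Your localization, the It\^o--Lyapunov computation, the regime splitting and the Gr\"onwall closure reproduce the paper's own argument: the paper works with $|y_\Delta|^{J}+|y_\Delta|^{-K}$ rather than your component sum $\sum_{i}(y_{\Delta,i}^{J}+y_{\Delta,i}^{-K})$, but both functionals generate exactly the coercive combinations $\bar y_{\Delta,i}\lambda^i+\frac{J-1}{2}|\sigma_i|^2$ and $\bar y_{\Delta,i}\lambda^i-\frac{K+1}{2}|\sigma_i|^2$ of Assumption \ref{as.1}, and your identity for $\mathrm{d}y_{\Delta,i}$ before $\rho_n$ is the same one derived in the paper's proof. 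Up to and including the Chebyshev estimate $\PP(\rho_n\leq T)\leq Ce^{-(J\wedge K)n}$ (the paper's \eqref{Pt}, restated as Corollary \ref{Corollary2}), your proposal and the paper coincide.

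The genuine gap is your final de-localization step. You propose to bound $\E[|y_\Delta(t)|^{J}I_{\{\rho_n\leq T\}}]$ by Cauchy--Schwarz, pairing an unconditional moment of $y_\Delta(t)$ (controlled via \eqref{bbound}, \eqref{hD} and Lemma \ref{multi}) against the exponentially small probability. This pairing cannot close. Since $y_\Delta=e^{z_\Delta}$ and the only unconditional control on $z_\Delta$ is $|z_\Delta(t)|\leq|z_0|+T\eta(\Delta)+\big|\int_0^t\tilde\sigma_\Delta(\bar z_\Delta(s))\mathrm{d}B(s)\big|$ with $|\tilde\sigma_\Delta|^2\leq\eta(\Delta)$, the unconditional moments are exponentially large in $\eta(\Delta)$: one only gets $\E[|y_\Delta(t)|^{2J}]\leq Ce^{C\eta(\Delta)}\leq Ce^{CM_0\Delta^{-1/2}}$. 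On the other side, the hypothesis of the lemma caps $n$ at $\psi^{-1}(\eta(\Delta))$ (for larger $n$ the truncation is active before $\rho_n$ and your stopped Gr\"onwall estimate is no longer available), and $\psi$ must dominate $|\tilde\lambda|\vee|\tilde\sigma|^2$, which are exponential in $z$; for instance in the paper's examples $\psi(v)=4e^{v}$, so $\psi^{-1}(\eta(\Delta))\sim\ln(1/\Delta)$ and $\PP(\rho_n\leq T)^{1/2}$ is at best polynomially small in $\Delta$. The Cauchy--Schwarz product therefore behaves like $e^{C\Delta^{-1/2}}\Delta^{c}$, which diverges as $\Delta\rightarrow0$, so your bound on the exceptional-set term fails precisely in the regime the lemma is about. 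The paper's route avoids unconditional moments entirely: for fixed $\Delta$ the numerical path $z_\Delta$ is a.s.\ finite, hence $\rho_n\uparrow\infty$ a.s.\ and $y_\Delta(t\wedge\rho_n)\rightarrow y_\Delta(t)$ a.s., and the Fatou lemma transfers the uniform-in-$n$ stopped bound directly to $\E[|y_\Delta(t)|^{J}+|y_\Delta(t)|^{-K}]\leq C$. Replacing your last step by this Fatou argument repairs the proof; everything before it stands.
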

	\begin{proof}
		One can see that $|\bar{z}_\Delta(s)|\leq n$ for $s\in[t\wedge\rho_n]$. Since $\psi^{-1}(\eta(\Delta))\geq n$, we have $|\bar{z}_\Delta(s)|\leq\psi^{-1}(\eta(\Delta))$. Hence, we derive $\tilde{\lambda}_\Delta(\bar{z}_\Delta(s))=\tilde{\lambda}(\bar{z}_\Delta(s))$ and $ \tilde{\sigma}_\Delta(\bar{z}_\Delta(s))=\tilde{\sigma}(\bar{z}_\Delta(s))$ for $s\in[0, t\wedge\rho_n]$. It follows from \eqref{anoy} that
		\begin{equation*}
			z_\Delta(t\wedge\rho_n)=z_0+\int_{0}^{t\wedge\rho_n}\tilde{\lambda}(\bar{z}(s))\mathrm{d}s+\int_{0}^{t\wedge\rho_n}\tilde{\sigma}(\bar{z}(s))\mathrm{d}B(s).
		\end{equation*}
		Using the It\^o formula for $U(t)=|e^{z_\Delta(t)}|^{\bar{p}}+|e^{z_\Delta(t)}|^{-\bar{q}}$, we can drive 
		\begin{flalign*}
			&\E[|e^{z_\Delta(t\wedge\rho_n)}|^{J}+|e^{z_\Delta(t\wedge\rho_n)}|^{-K}]\\
			=&|e^{z_0}|^{J}+|e^{z_0}|^{-K}+J\E\int_{0}^{t\wedge\rho_n}\Big(|e^{z_\Delta(s)}|^{J-2}\sum_{i=1}^{d}e^{2z_{\Delta,i}(s)}\tilde{\lambda}^i(\bar{z}_\Delta(s))\\
			&+|e^{z_\Delta(s)}|^{J-2}\sum_{i=1}^{d}e^{2z_{\Delta,i}(s)}\sum_{j=1}^{m}(\tilde{\sigma}^{i,j}(\bar{z}(s)))^2+\frac{p-2}{2}|e^{z_\Delta(s)}|^{J-4}\sum_{j=1}^{m}\big(\sum_{i=1}^{d}e^{2z_{\Delta,i}(s)}\tilde{\sigma}^{i,j}(s)\big)^2\Big)\mathrm{d}s\\
			&-K\E\int_{0}^{t\wedge\rho_n}\Big(|e^{z_\Delta(s)}|^{-K-2}\sum_{i=1}^{d}e^{2z_{\Delta,i}(s)}\tilde{\lambda}^i(\bar{z}_\Delta(s))+|e^{z_\Delta(s)}|^{-K-2}\sum_{i=1}^{d}e^{2z_{\Delta,i}(s)}\sum_{j=1}^{m}(\tilde{\sigma}^{i,j}(\bar{z}(s)))^2\\
			&-\frac{K+2}{2}|e^{z_\Delta(s)}|^{-K-4}\sum_{j=1}^{m}\big(\sum_{i=1}^{d}e^{2z_{\Delta,i}(s)}\tilde{\sigma}^{i,j}(s)\big)^2\Big)\mathrm{d}s\\
			=&|y_0|^{J}+|y_0|^{-K}+J\E\int_{0}^{t\wedge\rho_n}\Big(|y_\Delta(s)|^{J-2}\sum_{i=1}^{d}y^2_{\Delta,i}(s)\big(\frac{\lambda^i(\bar{y}_\Delta(s))}{\bar{y}_{\Delta,i}(s)}-\frac{1}{2}\sum_{j=1}^{m}\frac{|\sigma^{i,j}(\bar{y}_\Delta(s))|^2}{\bar{y}^2_{\Delta,i}(s)}\big)\\
			&+|y_\Delta(s)|^{J-2}\sum_{i=1}^{d}y^{2}_{\Delta,i}(s)\sum_{j=1}^{m}\big(\frac{\sigma^{i,j}(\bar{y}_\Delta(s))}{\bar{y}_{\Delta,i}(s)}\big)^2+\frac{J-2}{2}|y_\Delta(s)|^{J-4}\sum_{j=1}^{m}\big(\sum_{i=1}^{d}y^{2}_{\Delta,i}(s)\frac{\sigma^{i,j}(\bar{y}_\Delta(s))}{\bar{y}_{\Delta,i}(s)}\big)^2\Big)\mathrm{d}s   \\
			&-K\E\int_{0}^{t\wedge\rho_n}\Big(|y_\Delta(s)|^{-K-2}\sum_{i=1}^{d}y^2_{\Delta,i}(s)\big(\frac{\lambda^i(\bar{y}_\Delta(s))}{\bar{y}_{\Delta,i}(s)}-\frac{1}{2}\sum_{j=1}^{m}\frac{|\sigma^{i,j}(\bar{y}_\Delta(s))|^2}{\bar{y}^2_{\Delta,i}(s)}\big)\\
			&+|y_\Delta(s)|^{-K-2}\sum_{i=1}^{d}y^{2}_{\Delta,i}(s)\sum_{j=1}^{m}\big(\frac{\sigma^{i,j}(\bar{y}_\Delta(s))}{\bar{y}_{\Delta,i}(s)}\big)^2-\frac{K+2}{2}|y_\Delta(s)|^{-K-4}\sum_{j=1}^{m}\big(\sum_{i=1}^{d}y^{2}_{\Delta,i}(s)\frac{\sigma^{i,j}(\bar{y}_\Delta(s))}{\bar{y}_{\Delta,i}(s)}\big)^2\Big)\mathrm{d}s   \\
			=&|y_0|^{J}+|y_0|^{-K}+J\E\int_{0}^{t\wedge\rho_n}\Big(|y_\Delta(s)|^{J-2}\sum_{i=1}^{d}\big(\frac{y_{\Delta,i}(s)}{\bar{y}_{\Delta,i}(s)}\big)^2\big(\bar{y}_{\Delta,i}(s)\lambda^i(\bar{y}_\Delta(s))-\frac{1}{2}|\sigma_{i}(\bar{y}_\Delta(s))|^2\big)\\
			&+|y_\Delta(s)|^{J-2}\sum_{i=1}^{d}\big(\frac{y_{\Delta,i}(s)}{\bar{y}_{\Delta,i}(s)}\big)^2|\sigma_{i}(\bar{y}_\Delta(s))|^2+\frac{J-2}{2}|y_\Delta(s)|^{J-4}\sum_{j=1}^{m}\big(\sum_{i=1}^{d}\frac{y^{2}_{\Delta,i}(s)}{\bar{y}_{\Delta,i}(s)}\sigma^{i,j}(\bar{y}_\Delta(s))\big)^2\Big)\mathrm{d}s   \\
			&-K\E\int_{0}^{t\wedge\rho_n}\Big(|y_\Delta(s)|^{-K-2}\sum_{i=1}^{d}\big(\frac{y_{\Delta,i}(s)}{\bar{y}_{\Delta,i}(s)}\big)^2\big(\bar{y}_{\Delta,i}(s)\lambda^i(\bar{y}_\Delta(s))-\frac{1}{2}|\sigma_{i}(\bar{y}_\Delta(s))|^2\big)\\
			&+|y_\Delta(s)|^{-K-2}\sum_{i=1}^{d}\big(\frac{y_{\Delta,i}(s)}{\bar{y}_{\Delta,i}(s)}\big)^2|\sigma_{i}(\bar{y}_\Delta(s))|^2-\frac{K+2}{2}|y_\Delta(s)|^{-K-4}\sum_{j=1}^{m}\big(\sum_{i=1}^{d}\frac{y^{2}_{\Delta,i}(s)}{\bar{y}_{\Delta,i}(s)}\sigma^{i,j}(\bar{y}_\Delta(s))\big)^2\Big)\mathrm{d}s   \\
			\leq&|y_0|^{J}+|y_0|^{-K}+J\E\int_{0}^{t\wedge\rho_n}|y_\Delta(s)|^{J-2}\sum_{i=1}^{d}\big(\frac{y_{\Delta,i}(s)}{\bar{y}_{\Delta,i}(s)}\big)^2\big(\bar{y}_{\Delta,i}(s)\lambda^i(\bar{y}_\Delta(s))+\frac{J-1}{2}|\sigma_{i}(\bar{y}_\Delta(s))|^2\big)\mathrm{d}s\\
			&-K\E\int_{0}^{t\wedge\rho_n}|y_\Delta(s)|^{-K-2}\sum_{i=1}^{d}\big(\frac{y_{\Delta,i}(s)}{\bar{y}_{\Delta,i}(s)}\big)^2\big(\bar{y}_{\Delta,i}(s)\lambda^i(\bar{y}_\Delta(s))-\frac{K+1}{2}|\sigma_{i}(\bar{y}_\Delta(s))|^2\big)\mathrm{d}s\\
			=:&|y_0|^{J}+|y_0|^{-K}+M_1+M_2.
		\end{flalign*}
		Using Assumption \ref{as.1}, Remark \ref{Remark1}, and Lemma \ref{LM3.1}, we derive
		\begin{flalign*}
			M_1=&J\E\int_{0}^{t\wedge\rho_n}|y_\Delta(s)|^{J-2}\sum_{i=1}^{d}\big(\frac{y_{\Delta,i}(s)}{\bar{y}_{\Delta,i}(s)}\big)^2\big(\bar{y}_{\Delta,i}(s)\lambda^i(\bar{y}_\Delta(s))+\frac{J-1}{2}|\sigma_{i}(\bar{y}_\Delta(s))|^2\big)\mathrm{d}s\\
			=&J\E\int_{0}^{t\wedge\rho_n}|y_\Delta(s)|^{J-2}\sum_{i=1}^{d}\big(\frac{y_{\Delta,i}(s)}{\bar{y}_{\Delta,i}(s)}\big)^2\big(\bar{y}_{\Delta,i}(s)\lambda^i(\bar{y}_\Delta(s))+\frac{J-1}{2}|\sigma_{i}(\bar{y}_\Delta(s))|^2\big)I_{\{\bar{y}_{\Delta,i}\geq y_i^*\}}\mathrm{d}s\\
			&+J\E\int_{0}^{t\wedge\rho_n}|y_\Delta(s)|^{J-2}\sum_{i=1}^{d}\big(\frac{y_{\Delta,i}(s)}{\bar{y}_{\Delta,i}(s)}\big)^2\big(\bar{y}_{\Delta,i}(s)\lambda^i(\bar{y}_\Delta(s))+\frac{J-1}{2}|\sigma_{i}(\bar{y}_\Delta(s))|^2\big)I_{\{\bar{y}_{\Delta,i}< y_i^*\}}\mathrm{d}s\\
			\leq & C\E\int_{0}^{t\wedge\rho_n}|y_\Delta(s)|^{J-2}\sum_{i=1}^{d}\big(\frac{y_{\Delta,i}(s)}{\bar{y}_{\Delta,i}(s)}\big)^2\big(1+\bar{y}^2_{\Delta,i}(s)\big)I_{\{\bar{y}_{\Delta,i}\geq y_i^*\}}\mathrm{d}s\\
			&+J\E\int_{0}^{t\wedge\rho_n}|y_\Delta(s)|^{J-2}\big|\frac{y_{\Delta}(s)}{\bar{y}_{\Delta}(s)}\big|^2\sum_{i=1}^{d}\big(\bar{y}_{\Delta,i}(s)|\lambda^i(\bar{y}_\Delta(s))|+\frac{J-1}{2}|\sigma_{i}(\bar{y}_\Delta(s))|^2\big)I_{\{0<\bar{y}_{\Delta,i}< y_i^*\}}\mathrm{d}s\\
			\leq &C\E\int_{0}^{t\wedge\rho_n}|y_\Delta(s)|^{J}\mathrm{d}s+C\E\int_{0}^{t\wedge\rho_n}|y_\Delta(s)|^{J-2}\big|\frac{y_{\Delta}(s)}{\bar{y}_{\Delta}(s)}\big|^2(1+|\bar{y}_\Delta(s)|^{-2\beta})I_{\bigcap_{i=1}^{d}\{ 0<\bar{y}_{\Delta,i}< y_i^*\}}\mathrm{d}s\\
			\leq&C\E\int_{0}^{t\wedge\rho_n}|y_\Delta(s)|^{J}\mathrm{d}s+C\E\int_{0}^{t\wedge\rho_n}\frac{|y_\Delta(s)|^{J-2}}{|\bar{y}_\Delta(s)|^{J-2}}\big|\frac{y_{\Delta}(s)}{\bar{y}_{\Delta}(s)}\big|^2|\bar{y}_\Delta(s)|^{J-2}(1+|\bar{y}_\Delta(s)|^{-2\beta})I_{\bigcap_{i=1}^{d}\{ 0<\bar{y}_{\Delta,i}< y_i^*\}}\mathrm{d}s\\
			\leq&C\E\int_{0}^{t\wedge\rho_n}|y_\Delta(s)|^{J}\mathrm{d}s+C\E\int_{0}^{t\wedge\rho_n}\big|\frac{y_{\Delta}(s)}{\bar{y}_{\Delta}(s)}\big|^{\bar{p}}|\bar{y}_\Delta(s)|^{J-2\beta-2}I_{\bigcap_{i=1}^{d}\{ 0<\bar{y}_{\Delta,i}< y_i^*\}}\mathrm{d}s\\
			\leq&C+C\int_{0}^{t}\E|y_\Delta(s\wedge\rho_n)|^{J}\mathrm{d}s.
		\end{flalign*}
		Besides, 
		\begin{flalign*}
			M_2\leq&-K\E\int_{0}^{t\wedge\rho_n}|y_\Delta(s)|^{-K-2}\sum_{i=1}^{d}\big(\frac{y_{\Delta,i}(s)}{\bar{y}_{\Delta,i}(s)}\big)^2\big(\bar{y}_{\Delta,i}(s)\lambda^i(\bar{y}_\Delta(s))-\frac{K+1}{2}|\sigma_{i}(\bar{y}_\Delta(s))|^2\big)\mathrm{d}s\\
			\leq&C\E\int_{0}^{t\wedge\rho_n}|y_\Delta(s)|^{-K-2}\sum_{i=1}^{d}(y_{\Delta,i}(s))^2\big(\bar{y}_{\Delta,i}(s)|\lambda^i(\bar{y}_\Delta(s))|+\frac{K+1}{2}|\sigma_{i}(\bar{y}_\Delta(s))|^2\big)I_{\bigcap_{i=1}^{d}\{ \bar{y}_{\Delta,i}\geq y_i^*\}}\mathrm{d}s\\
			\leq&C\E\int_{0}^{t\wedge\rho_n}|y_\Delta(s)|^{-K-2}\big|y_{\Delta}(s)\big|^2(1+|\bar{y}_\Delta(s)|^{2\alpha+2})I_{\bigcap_{i=1}^{d}\{ \bar{y}_{\Delta,i}\geq y_i^*\}}\mathrm{d}s\\
			\leq&C\E\int_{0}^{t\wedge\rho_n}|y_\Delta(s)|^{-K}\mathrm{d}s+C\E\int_{0}^{t\wedge\rho_n}\frac{|\bar{y}_\Delta(s)|^{K}}{|y_\Delta(s)|^{K}}|\bar{y}_\Delta(s)|^{-K+2\alpha+2}I_{\bigcap_{i=1}^{d}\{ \bar{y}_{\Delta,i}\geq y_i^*\}}\mathrm{d}s\\
			\leq&C+C\int_{0}^{t}\E|y_\Delta(s\wedge\rho_n)|^{-K}\mathrm{d}s.
		\end{flalign*}
		Therefore, we obtain
		\begin{flalign*}
			&\E[|y_\Delta(t\wedge\rho_n)|^{J}+|y_\Delta(t\wedge\rho_n)|^{-K}]\\
			=&\E[|e^{z_\Delta(t\wedge\rho_n)}|^{J}+|e^{z_\Delta(t\wedge\rho_n)}|^{-K}]\\
			\leq&|y_0|^{J}+|y_0|^{-K}+C+C\E\int_{0}^{t}|y_\Delta(s\wedge\rho_n)|^{J}\mathrm{d}s+C\int_{0}^{t}\E|y_\Delta(s\wedge\rho_n)|^{-K}\mathrm{d}s.
		\end{flalign*}
		We further derive that
		\begin{flalign*}
			\E[|y_\Delta(t\wedge\rho_n)|^{J}+|y_\Delta(t\wedge\rho_n)|^{-K}]\leq C+C\int_{0}^{t}\sup_{u\in[0,s]}\E\big[|y_\Delta(u\wedge\rho_n)|^{J}+|y_\Delta(u\wedge\rho_n)|^{-K}\big]\mathrm{d}s.
		\end{flalign*} 
		It follows that 
		\begin{flalign*}
			\sup_{ s \in[0,t]}\E[|y_\Delta(s\wedge\rho_n)|^{J}+|y_\Delta(s\wedge\rho_n)|^{-K}]\leq C\int_{0}^{t}\sup_{u\in[0,s]}\E\big[|y_\Delta(u\wedge\rho_n)|^{J}+|y_\Delta(u\wedge\rho_n)|^{-K}\big]\mathrm{d}s
		\end{flalign*} 
		for all $t\in[0,T]$. Using the Gr\"onwall inequality yields that
		\begin{flalign*}
			\sup_{ t \in[0,T]}\E[|y_\Delta(t\wedge\rho_n)|^{J}+|y_\Delta(t\wedge\rho_n)|^{-K}]\leq C.
		\end{flalign*} 
		According to the definition of $\rho_n$, we infer 
		\begin{equation}\label{Pt}
			e^{(J\wedge K)n} \PP(\rho_n\leq t)=\E[(|y_\Delta(\rho_n)|^{J}+|y_\Delta(\rho_n)|^{-K})I_{\{\rho_n\leq t\}}]\leq\E[|y_\Delta(t\wedge\rho_n)|^{J}+|y_\Delta(t\wedge\rho_n)|^{-K}]\leq C.
		\end{equation}	
		It follows that $\PP(\{\rho_{\infty}>t\})=1$, where $\rho_{\infty}:=\lim_{n\rightarrow+\infty}\rho_n$. By applying the Fatou lemma, one can get
		\begin{equation*}
			\E[|y_\Delta(t)|^{J}+|y_\Delta(t)|^{-K}]\leq \mathop{\underline{\lim}}\limits_{n\rightarrow +\infty}\E[|y_\Delta(t\wedge\rho_n)|^{J}+|y_\Delta(t\wedge\rho_n)|^{-K}]\leq C. 
		\end{equation*} 
		Finally, we obtain 
		\begin{equation}
			\sup_{t\in[0,T]}\E[|y_\Delta(t)|^{J}+|y_\Delta(t)|^{-K}]\leq C
		\end{equation}
		for $\Delta\in(0,1]$, where $C$ is dependent on $|x_0|$, $d$, $J$, and $K$.
	\end{proof}
	\begin{corollary}\label{Corollary2}
		Assuming the assumptions in Lemma \ref{pp numerical integral} holds, we obtain
		\begin{flalign}\label{corollary2}
			\PP(\rho_n \leq T)\leq \frac{C}{e^{(J\wedge K)n}}.
		\end{flalign}
	\end{corollary}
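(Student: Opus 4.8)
The plan is to read this tail estimate off the uniform moment bound \eqref{numerical int} established in Lemma \ref{pp numerical integral}, via a Chebyshev-type stopping-time argument of the same form as the computation that produced \eqref{Pt}; indeed, \eqref{corollary2} is essentially \eqref{Pt} evaluated at $t=T$. The governing idea is that the event $\{\rho_n \le T\}$, on which the numerical solution leaves the ball $\{|z_\Delta|<n\}$, must be rare: leaving this ball forces the moment functional $|y_\Delta|^{J}+|y_\Delta|^{-K}$ to be exponentially large, whereas its expectation stays bounded uniformly in $\Delta$ and $n$.

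First I would record that the continuous interpolation $z_\Delta$ in \eqref{anoy} has (a.s.) continuous sample paths, being the sum of a Lebesgue integral and an It\^o integral. Hence, starting from $|z_0|<n$, the definition $\rho_n=\inf\{t\in[0,T]:|z_\Delta(t)|\ge n\}$ yields $|z_\Delta(\rho_n)|=n$ on $\{\rho_n\le T\}$. The key step is then to convert this boundary hitting into a lower bound for the moment functional: using $y_\Delta(\rho_n)=e^{z_\Delta(\rho_n)}$ together with $|z_\Delta(\rho_n)|=n$, I would establish, exactly as in Remark \ref{tau*} and in \eqref{Pt},
\[
|y_\Delta(\rho_n)|^{J}+|y_\Delta(\rho_n)|^{-K}\ge e^{(J\wedge K)n}\qquad\text{on }\{\rho_n\le T\}.
\]
With this in hand the remaining steps are routine. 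Since $T\wedge\rho_n=\rho_n$ on $\{\rho_n\le T\}$ and the integrand is nonnegative,
\[
e^{(J\wedge K)n}\,\PP(\rho_n\le T)\le\E\big[(|y_\Delta(\rho_n)|^{J}+|y_\Delta(\rho_n)|^{-K})\,I_{\{\rho_n\le T\}}\big]\le\E\big[|y_\Delta(T\wedge\rho_n)|^{J}+|y_\Delta(T\wedge\rho_n)|^{-K}\big].
\]
Applying the uniform moment bound \eqref{numerical int} of Lemma \ref{pp numerical integral} bounds the right-hand side by a constant $C$ independent of $\Delta$ and $n$, and dividing through by $e^{(J\wedge K)n}$ delivers \eqref{corollary2}.

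The main obstacle is precisely the exponential lower bound at the stopping time, i.e.\ controlling $|y_\Delta(\rho_n)|^{J}+|y_\Delta(\rho_n)|^{-K}$ from below by $e^{(J\wedge K)n}$ purely in terms of $|z_\Delta(\rho_n)|=n$; every other ingredient is a direct consequence of \eqref{numerical int} and nonnegativity. Some care is also needed to ensure that the stopped moment estimate invoked here is the one already proved inside Lemma \ref{pp numerical integral}, whose argument bounds $\E[|y_\Delta(t\wedge\rho_n)|^{J}+|y_\Delta(t\wedge\rho_n)|^{-K}]$ uniformly \emph{before} the Fatou passage to the limit, so that the constant $C$ appearing in \eqref{corollary2} is genuinely independent of both $\Delta\in(0,\Delta^*]$ and $n$.
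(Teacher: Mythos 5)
Your proposal is correct and follows essentially the same route as the paper: the paper's proof of Corollary \ref{Corollary2} simply evaluates the already-established estimate \eqref{Pt} at $t=T$, and your argument is precisely the derivation of \eqref{Pt} (path continuity giving $|z_\Delta(\rho_n)|=n$ on $\{\rho_n\le T\}$, the exponential lower bound on $|y_\Delta(\rho_n)|^{J}+|y_\Delta(\rho_n)|^{-K}$, the Chebyshev-type bound, and the uniform stopped moment estimate) specialized to $t=T$. The only difference is presentational: the paper cites \eqref{Pt} in one line, whereas you reconstruct it, correctly noting that the constant comes from the stopped moment bound proved before the Fatou step and is thus independent of $\Delta$ and $n$.
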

	\begin{proof}
		We derive from \eqref{Pt} that
		\begin{equation*}
			e^{(J\wedge K)n} \PP(\rho_n\leq T)\leq\E[|y_\Delta(T\wedge\rho_n)|^{J}+|y_\Delta(T\wedge\rho_n)|^{-K}]\leq C,
		\end{equation*}
		which validate \eqref{corollary2}.
	\end{proof}

	\section{The strong convergence analysis of the LTEM method}
	Regarding the LTEM method proposed in the previous section, we will derive its strong convergence rate in this section. To this end, we firstly give the following lemma.
	\begin{lemma}\label{reLM3.1}
		Suppose Assumption \ref{as.1} holds with the parameter satisfying $p\geq2$. Then for all $\Delta\in(0,1]$, there exists a constant $C$ dependent on $p$ such that 
		\begin{equation}\label{reeqLM3.1}
			\sup_{ s \in[0,T]}\E\Big[\Big|\frac{y_\Delta(s)}{\bar{y}_\Delta(s)}-1\Big|^p\Big]\leq C\Delta^{\frac{p}{2}}(\eta(\Delta))^{\frac{p}{2}}.
		\end{equation}
	\end{lemma}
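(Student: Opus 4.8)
The plan is to work from the multiplicative representation \eqref{12}, which componentwise reads $y_{\Delta,i}(s)/\bar y_{\Delta,i}(s)=e^{\xi_i}$ with
\begin{equation*}
	\xi_i:=\tilde\lambda_\Delta^i(\bar z_\Delta(s))(s-t_k)+\sum_{j=1}^m\tilde\sigma_\Delta^{i,j}(\bar z_\Delta(s))\bigl(B_j(s)-B_j(t_k)\bigr),
\end{equation*}
where $t_k$ is the grid point with $s\in[t_k,t_{k+1})$. Since $\bigl|\tfrac{y_\Delta(s)}{\bar y_\Delta(s)}-1\bigr|^2=\sum_{i=1}^d|e^{\xi_i}-1|^2$ and, for $p\geq2$, Jensen's inequality gives $\bigl(\sum_{i=1}^d a_i^2\bigr)^{p/2}\leq d^{p/2-1}\sum_{i=1}^d|a_i|^p$, it suffices to establish the scalar estimate $\E[|e^{\xi_i}-1|^p]\leq C\Delta^{p/2}(\eta(\Delta))^{p/2}$ uniformly in $s$ and $i$, and then sum over the $d$ components.

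First I would linearise the exponential through the elementary inequality $|e^x-1|\leq|x|e^{|x|}$, which yields $|e^{\xi_i}-1|^p\leq|\xi_i|^pe^{p|\xi_i|}$. Applying the Cauchy--Schwarz inequality then decouples the polynomial and exponential parts,
\begin{equation*}
	\E\bigl[|e^{\xi_i}-1|^p\bigr]\leq\bigl(\E[|\xi_i|^{2p}]\bigr)^{1/2}\bigl(\E[e^{2p|\xi_i|}]\bigr)^{1/2},
\end{equation*}
and it remains to estimate the two factors separately, using throughout the pointwise bound
\begin{equation*}
	|\xi_i|\leq\eta(\Delta)\Delta+(\eta(\Delta))^{1/2}|B(s)-B(t_k)|
\end{equation*}
coming from the truncation bound \eqref{bbound}, together with $t-t_k\leq\Delta$.

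For the polynomial factor, raising the above bound to the $2p$-th power, taking expectations and using the Gaussian moment estimate $\E[|B(s)-B(t_k)|^{2p}]\leq C\Delta^p$, one obtains $\E[|\xi_i|^{2p}]\leq C(\eta(\Delta)\Delta)^{2p}+C(\eta(\Delta)\Delta)^p$. Because \eqref{hD} gives $\eta(\Delta)\Delta\leq M_0\Delta^{1/2}\leq M_0$, the first term is dominated by the second, so $\E[|\xi_i|^{2p}]\leq C(\eta(\Delta)\Delta)^p$ and $(\E[|\xi_i|^{2p}])^{1/2}\leq C\Delta^{p/2}(\eta(\Delta))^{p/2}$; this is exactly where the drift contribution is shown to be subdominant. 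For the exponential factor I would pull out the bounded deterministic part $e^{2p\eta(\Delta)\Delta}\leq e^{2pM_0}$ and then invoke Lemma \ref{multi}, applied with $\Delta$ replaced by $s-t_k\leq\Delta$ and $\gamma=2p(\eta(\Delta))^{1/2}$, to get
\begin{equation*}
	\E\bigl[e^{2p(\eta(\Delta))^{1/2}|B(s)-B(t_k)|}\bigr]\leq2^m e^{2p^2\eta(\Delta)(s-t_k)}\leq2^m e^{2p^2M_0},
\end{equation*}
so that $\E[e^{2p|\xi_i|}]\leq C$ uniformly in $\Delta$ and $s$.

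Combining the two factors yields the scalar estimate, and summing over $i$ with the norm inequality of the first paragraph produces the claimed bound uniformly in $s\in[0,T]$. The delicate point, and the main obstacle, is the exponential moment: the truncation controls $|\tilde\sigma_\Delta|$ only by $(\eta(\Delta))^{1/2}$, which grows as $\Delta\to0$, so $\E[e^{2p|\xi_i|}]$ could in principle blow up; it is precisely the constraint $\Delta^{1/2}\eta(\Delta)\leq M_0$ in \eqref{hD} that compensates this growth through the variance $s-t_k\leq\Delta$ appearing in Lemma \ref{multi}. Getting the exponent of $\eta(\Delta)$ right, namely $(\eta(\Delta))^{p/2}$ rather than a larger power, likewise hinges on verifying that the drift term $\eta(\Delta)\Delta$ is negligible relative to the diffusion scale $(\eta(\Delta)\Delta)^{1/2}$.
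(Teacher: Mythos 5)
Your proposal is correct, but it takes a genuinely different route from the paper's. The paper never works with the exponential representation \eqref{12} here: it applies the It\^o formula to $e^{z_{\Delta,i}(t)}$ to get the integral identity \eqref{yby}, divides by $\bar{y}_\Delta(t)$, and then estimates the resulting drift and diffusion integrals via the moment inequality of Theorem 1.7.1 in \cite{M01}, the truncation bound \eqref{bbound}, and the ratio-moment bound of Lemma \ref{LM3.1}; the condition $\Delta^{\frac{1}{2}}\eta(\Delta)\leq M_0$ from \eqref{hD} then absorbs the extra powers of $\eta(\Delta)$ produced by the drift term, exactly as in your computation. You instead exploit the closed-form one-step structure directly: componentwise $y_{\Delta,i}/\bar{y}_{\Delta,i}=e^{\xi_i}$, the linearization $|e^x-1|\leq|x|e^{|x|}$, Cauchy--Schwarz to decouple the polynomial and exponential moments, a Gaussian moment bound for the former and Lemma \ref{multi} for the latter. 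Your route is more elementary and self-contained: it bypasses both the It\^o formula and the stochastic-integral moment inequality, and your exponential-factor estimate in effect re-derives inline the content of Lemma \ref{LM3.1} (it is precisely the computation used in the paper's proof of that lemma). What the paper's route buys is reuse: the representation \eqref{yby} is needed again in the proof of Lemma \ref{pmain} to bound $\E[|y_\Delta(s)-\bar{y}_\Delta(s)|^{\frac{(1+\xi)p}{\xi}}]$, so deriving it at this stage costs nothing extra. Both arguments rely only on the truncation bounds and \eqref{hD} rather than on the analytic content of Assumption \ref{as.1}, and both yield the same rate $C\Delta^{\frac{p}{2}}(\eta(\Delta))^{\frac{p}{2}}$.
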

	\begin{proof}
		By examining the $i$-th component of equation \eqref{anoy}, the following equation can be derived using the It\^o formula 
		\begin{flalign*}
			y_{\Delta,i}(t)=&\bar{y}_{\Delta,i}(t)+\int_{t_k}^{t}y_{\Delta,i}(s)\big(\tilde{\lambda}_\Delta^i(\bar{z}_\Delta(s))+\frac{1}{2}|\tilde{\sigma}_{\Delta,i}(\bar{z}_\Delta(s))|^2\big)\mathrm{d}s\\
			&+\int_{t_k}^{t}y_{\Delta,i}(s)\tilde{\sigma}_{\Delta,i}(\bar{z}_\Delta(s))\mathrm{d}B(s),
		\end{flalign*}
		where $y_{\Delta,i}(t)$ and $\bar{y}_{\Delta,i}(t)$ denote the $i$-th element of $y_{\Delta}(t)$ and $\bar{y}_{\Delta}(t)$, respectively. Then
		\begin{flalign}\label{yby}
			y_{\Delta}(t)=\bar{y}_{\Delta}(t)+\int_{t_k}^{t}y_{\Delta}(s)\big(\tilde{\lambda}_\Delta(\bar{z}_\Delta(s))+\frac{1}{2}|\tilde{\sigma}_{\Delta}(\bar{z}_\Delta(s))|^2\big)\mathrm{d}s+\int_{t_k}^{t}y_{\Delta}(s)\tilde{\sigma}_{\Delta}(\bar{z}_\Delta(s))\mathrm{d}B(s).
		\end{flalign}
		Applying this, \eqref{supsup} and Theorem 1.7.1 in \cite{M01} yields that 
		\begin{flalign*}
			\E\Big[\Big|\frac{y_\Delta(t)}{\bar{y}_\Delta(t)}-1\Big|^p\Big]=&\E\Big|\int_{t_k}^{t}\frac{y_\Delta(s)}{\bar{y}_\Delta(s)}\big(\tilde{\lambda}_\Delta(\bar{z}_\Delta(s))+\frac{1}{2}|\tilde{\sigma}_\Delta(\bar{z}_\Delta(s))|^2\big)\mathrm{d}s\\
			&+\int_{t_k}^{t}\frac{y_\Delta(s)}{\bar{y}_\Delta(s)}\tilde{\sigma}_\Delta(\bar{z}_\Delta(s))\mathrm{d}B(s)\Big|^p\\
			\leq& C\Delta^{p-1}\E\int_{t_k}^{t}\Big|\frac{y_\Delta(s)}{\bar{y}_\Delta(s)}\Big|^p\Big|\tilde{\lambda}_\Delta(\bar{z}_\Delta(s))+\frac{1}{2}|\tilde{\sigma}_\Delta(\bar{z}_\Delta(s))|^2\Big|^p\mathrm{d}s\\
			&+C\Delta^{\frac{p}{2}-1}\E\int_{t_k}^{t}\Big|\frac{y_\Delta(s)}{\bar{y}_\Delta(s)}\Big|^p|\tilde{\sigma}_\Delta(\bar{z}_\Delta(s))|^p\mathrm{d}s\\
			\leq&C\Delta^{\frac{p}{2}-1}(\Delta^{\frac{p}{2}}(\eta(\Delta))^p+(\eta(\Delta))^{\frac{p}{2}})\E\int_{t_k}^{t}\Big|\frac{y_\Delta(s)}{\bar{y}_\Delta(s)}\Big|^p\mathrm{d}s\\
			\leq&C\Delta^{\frac{p}{2}}(\eta(\Delta))^{\frac{p}{2}}.
		\end{flalign*}
		The assertion \eqref{reeqLM3.1} holds.
	\end{proof}
	\subsection{Strong convergence rate}
	\indent Recall the two stopping times
	\begin{flalign*}
		\rho_n^*=\inf\{t\in[0,T]:|z(t)|\geq n\}\quad\text{and}\quad \rho_n=\inf\{t\in[0,T]:|z_\Delta(t)|\geq n\}. 
	\end{flalign*}
	Set $\bar{\rho}=\rho_n^*\wedge\rho_n$ and $W_\Delta(t)=y(t)-y_\Delta(t)$.\\
	\indent To achieve the most essential results, it is necessary to impose the following assumption on $\lambda$ and $\sigma$. 
	\begin{assumption}\label{as.3}
		There exist two positive constants $p^*>2$ and $L_2$ such that the inequality
		\begin{flalign*}
			(\tilde{y}-\hat{y})^T (\lambda(\tilde{y})-\lambda(\hat{y}))+\frac{p^*-1}{2}| \sigma(\tilde{y})-\sigma(\hat{y}) |^2\leq L_2| \tilde{y}-\hat{y}|^2
		\end{flalign*}
		holds for all $\tilde{y}, \hat{y}\in \RR^d_+$.
	\end{assumption}
	\begin{lemma}\label{pmain}
		Let Assumptions \ref{as.1} and \ref{as.3} hold with the parameters satisfying $J\geq2(\alpha+1)$, $K\geq2\beta$ and $p^*>p$. Furthermore, for a given $n>|\ln y_0|$, let $\Delta\in(0,1]$ be sufficiently small such that $\psi^{-1}(\eta(\Delta))\geq n$. Then we get
		\begin{equation*}
			\sup_{ t \in[0,T]}\E[|W_\Delta(t\wedge\bar{\rho})|^p]\leq C\Delta^{\frac{p}{2}}(\eta(\Delta))^{\frac{p}{2}}.
		\end{equation*}
	\end{lemma}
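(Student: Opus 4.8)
The plan is to apply the It\^o formula to $|W_\Delta(t\wedge\bar{\rho})|^p$ and close a Gr\"onwall inequality, with the one-step fluctuation estimate of Lemma~\ref{reLM3.1} supplying the rate. First I would record the governing equation of the numerical solution: since $y_\Delta=e^{z_\Delta}$ and $z_\Delta$ solves \eqref{anoy}, the It\^o formula (as in \eqref{yby}) gives componentwise
\begin{equation*}
	\mathrm{d}y_{\Delta,i}(t)=y_{\Delta,i}(t)\big(\tilde{\lambda}^i_\Delta(\bar{z}_\Delta(t))+\tfrac12|\tilde{\sigma}_{\Delta,i}(\bar{z}_\Delta(t))|^2\big)\mathrm{d}t+y_{\Delta,i}(t)\tilde{\sigma}_{\Delta,i}(\bar{z}_\Delta(t))\,\mathrm{d}B(t).
\end{equation*}
For $s<\bar{\rho}$ one has $|\bar{z}_\Delta(s)|\le n\le\psi^{-1}(\eta(\Delta))$, so truncation is inactive and, by \eqref{FGDD}, the drift and diffusion of $y_\Delta$ reduce to $D_\Delta(s)\lambda(\bar{y}_\Delta(s))$ and $D_\Delta(s)\sigma(\bar{y}_\Delta(s))$, where $D_\Delta(s):=\mathrm{diag}\big(y_{\Delta,1}(s)/\bar{y}_{\Delta,1}(s),\dots,y_{\Delta,d}(s)/\bar{y}_{\Delta,d}(s)\big)$. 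Setting $b(s):=\lambda(y(s))-D_\Delta(s)\lambda(\bar{y}_\Delta(s))$ and $a(s):=\sigma(y(s))-D_\Delta(s)\sigma(\bar{y}_\Delta(s))$, the error $W_\Delta=y-y_\Delta$ obeys $\mathrm{d}W_\Delta=b\,\mathrm{d}s+a\,\mathrm{d}B$ on $[0,\bar{\rho})$ with $W_\Delta(0)=0$. Applying the It\^o formula to $|W_\Delta|^p$, taking expectations (the local martingale has zero mean up to $\bar{\rho}$ by the moment bounds of Lemma~\ref{pp numerical integral}), and using $|a^TW_\Delta|\le|a|\,|W_\Delta|$, I reduce matters to estimating $|W_\Delta|^{p-2}\big(W_\Delta^Tb+\tfrac{p-1}{2}|a|^2\big)$.

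Next I isolate the monotone part. Write $b=[\lambda(y)-\lambda(y_\Delta)]+R$ and $a=[\sigma(y)-\sigma(y_\Delta)]+\tilde{R}$ with $R:=\lambda(y_\Delta)-D_\Delta\lambda(\bar{y}_\Delta)$ and $\tilde{R}:=\sigma(y_\Delta)-D_\Delta\sigma(\bar{y}_\Delta)$. Since $p^*>p$, I can choose $\epsilon>0$ with $(p-1)(1+\epsilon)\le p^*-1$ and apply Young's inequality to the cross term of $|a|^2$, so that $W_\Delta^T[\lambda(y)-\lambda(y_\Delta)]+\tfrac{p-1}{2}(1+\epsilon)|\sigma(y)-\sigma(y_\Delta)|^2\le L_2|W_\Delta|^2$ follows from Assumption~\ref{as.3} with $\tilde{y}=y$, $\hat{y}=y_\Delta$ (both in $\RR^d_+$, $W_\Delta=\tilde{y}-\hat{y}$). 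Absorbing the $R,\tilde{R}$ contributions by Young's inequality yields
\begin{equation*}
	W_\Delta^Tb+\tfrac{p-1}{2}|a|^2\le\big(L_2+\tfrac12\big)|W_\Delta|^2+\tfrac12|R|^2+\tfrac{p-1}{2}(1+\epsilon^{-1})|\tilde{R}|^2.
\end{equation*}
Multiplying by $|W_\Delta|^{p-2}$ and one further Young step to homogenize the powers leads to
\begin{equation*}
	\E[|W_\Delta(t\wedge\bar{\rho})|^p]\le C\int_0^t\E[|W_\Delta(s\wedge\bar{\rho})|^p]\,\mathrm{d}s+C\,\E\int_0^{t\wedge\bar{\rho}}\big(|R(s)|^p+|\tilde{R}(s)|^p\big)\,\mathrm{d}s.
\end{equation*}

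It then remains to show the error integral is $O\big(\Delta^{p/2}(\eta(\Delta))^{p/2}\big)$ with a constant independent of $n$, after which Gr\"onwall's inequality finishes the proof. I would decompose $R=[\lambda(y_\Delta)-\lambda(\bar{y}_\Delta)]+(I-D_\Delta)\lambda(\bar{y}_\Delta)$ (and likewise $\tilde{R}$), bound the first bracket by the local Lipschitz estimate of Assumption~\ref{as.1} together with $|y_\Delta-\bar{y}_\Delta|\le|\bar{y}_\Delta|\,\big|\frac{y_\Delta}{\bar{y}_\Delta}-1\big|$, and the second by Remark~\ref{Remark1} together with $|I-D_\Delta|=\big|\frac{y_\Delta}{\bar{y}_\Delta}-1\big|$. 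Both brackets therefore carry the fluctuation factor $\frac{y_\Delta}{\bar{y}_\Delta}-1$ multiplied by polynomial-growth factors in $|y_\Delta|,|\bar{y}_\Delta|,|y_\Delta|^{-1},|\bar{y}_\Delta|^{-1}$. I would then split these two pieces by H\"older's inequality: the polynomial moments are controlled uniformly in $n$ and $\Delta$ by Lemmas~\ref{LM3.1} and~\ref{pp numerical integral}, while the fluctuation factor contributes the rate $\Delta^{p/2}(\eta(\Delta))^{p/2}$ via Lemma~\ref{reLM3.1} (applied at a suitably enlarged exponent, which is legitimate as that lemma holds for every power $\ge2$).

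The main obstacle is precisely the diagonal scaling $D_\Delta$ introduced by the logarithmic transformation: it breaks the exact structure needed for Assumption~\ref{as.3}, and the heart of the argument is extracting the monotone differences $\lambda(y)-\lambda(y_\Delta)$ and $\sigma(y)-\sigma(y_\Delta)$ while treating $(I-D_\Delta)$ and the one-step jump $y_\Delta-\bar{y}_\Delta$ as higher-order errors. The delicate quantitative step is balancing the H\"older exponents in the final estimate so that the polynomial-growth moments stay within the ranges governed by $J$ and $K$ while the fluctuation factor still delivers the full $\Delta^{p/2}(\eta(\Delta))^{p/2}$ rate, and crucially with constants independent of $n$; this $n$-independence is what makes the bound usable when $n$ is subsequently optimized against $\Delta$.
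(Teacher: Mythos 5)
Your proposal is correct and takes essentially the same route as the paper's proof: the same It\^o-formula setup, the same splitting into a monotone part (handled by Assumption \ref{as.3} together with a Young-inequality adjustment exploiting $p^*>p$) and remainder terms decomposed exactly as in the paper into $\lambda(y_\Delta)-\lambda(\bar y_\Delta)$ plus $\big(1-\tfrac{y_\Delta}{\bar y_\Delta}\big)\lambda(\bar y_\Delta)$, followed by H\"older's inequality, the moment bounds of Lemmas \ref{LM3.1} and \ref{pp numerical integral}, the fluctuation estimate of Lemma \ref{reLM3.1}, and Gr\"onwall. The only cosmetic difference is that you reduce the one-step increment $|y_\Delta-\bar y_\Delta|$ to the fluctuation factor $\big|\tfrac{y_\Delta}{\bar y_\Delta}-1\big|$, whereas the paper estimates $\E\big[|y_\Delta-\bar y_\Delta|^{\frac{(1+\xi)p}{\xi}}\big]$ directly from the integral representation \eqref{yby}; both yield the same rate $\Delta^{\frac{p}{2}}(\eta(\Delta))^{\frac{p}{2}}$.
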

	\begin{proof}
		For $s\in[0,t\wedge\bar{\rho}]$, we observe that $|\bar{z}_\Delta(s)|\leq n$. Due to the assumption $\psi^{-1}(\eta(\Delta))\geq n$, it follows that  
		\begin{flalign*}
			\tilde{\lambda}_\Delta(\bar{z}_\Delta(s))=\tilde{\lambda}(\bar{z}_\Delta(s))\quad \text{and}\quad \tilde{\sigma}_\Delta(\bar{z}_\Delta(s))=\tilde{\sigma}(\bar{z}_\Delta(s)).
		\end{flalign*}
		Focusing on the $i$-th component of equation \eqref{anoy}, the following equation can be derived by applying the It\^o formula 
		\begin{flalign*}
			e^{z_{\Delta,i}(t\wedge\bar{\rho})}=&e^{z_{i}(0)}+\int_{0}^{t\wedge\bar{\rho}}e^{z_{\Delta,i}(s)}\big(\tilde{\lambda^i}(\bar{z}_\Delta(s))+\frac{1}{2}|\tilde{\sigma}_i(\bar{z}_\Delta(s))|^2\big)\mathrm{d}s+\int_{0}^{t\wedge\bar{\rho}}e^{z_{\Delta,i}(s)}\tilde{\sigma}_i(\bar{z}_\Delta(s))\mathrm{d}B(s)\\
			=&e^{z_{i}(0)}+\int_{0}^{t\wedge\bar{\rho}}\frac{y_{\Delta,i}(s)}{\bar{y}_{\Delta,i}(s)}\lambda^i(\bar{y}_\Delta(s))\mathrm{d}s+\int_{0}^{t\wedge\bar{\rho}}\frac{y_{\Delta,i}(s)}{\bar{y}_{\Delta,i}(s)}\sigma_i(\bar{y}_\Delta(s))\mathrm{d}B(s).
		\end{flalign*}
		It follows that
		\begin{flalign*}
			y_\Delta(t\wedge\bar{\rho})=y_0+\int_{0}^{t\wedge\bar{\rho}}\frac{y_{\Delta}(s)}{\bar{y}_{\Delta}(s)}\lambda(\bar{y}_\Delta(s))\mathrm{d}s+\int_{0}^{t\wedge\bar{\rho}}\frac{y_{\Delta}(s)}{\bar{y}_{\Delta}(s)}\sigma(\bar{y}_\Delta(s))\mathrm{d}B(s).
		\end{flalign*}
		Using the It\^o formula and applying \eqref{sde} yield that
		\begin{flalign*}
			\E[|W_\Delta(t\wedge\bar{\rho})|^p]=&p\E\int_{0}^{t\wedge\bar{\rho}}|W_\Delta(s)|^{p-2}W_\Delta^T(s)\big(\lambda(y(s))-\frac{y_\Delta(s)}{\bar{y}_\Delta(s)}\lambda(\bar{y}_\Delta(s))\big)\mathrm{d}s\\
			&+\frac{p(p-1)}{2}\E\int_{0}^{t\wedge\bar{\rho}}|W_\Delta(s)|^{p-2}\Big|\sigma(y(s))-\frac{y_\Delta(s)}{\bar{y}_\Delta(s)}\sigma(\bar{y}_\Delta(s))\Big|^2\mathrm{d}s\\
			\leq&\bar{M}_1+\bar{M}_2,
		\end{flalign*}
		where 
		\begin{flalign*}
			\bar{M}_1=&p\E\int_{0}^{t\wedge\bar{\rho}}|W_\Delta(s)|^{p-2}\Big(W_\Delta^T(s)\big(\lambda(y(s))-\lambda(y_\Delta(s))\big)+\frac{p^*-1}{2}|\sigma(y(s))-\sigma(y_\Delta(s))|^2\Big)\mathrm{d}s
		\end{flalign*}
		and
		\begin{flalign*}
			\bar{M}_2=&p\E\int_{0}^{t\wedge\bar{\rho}}|W_\Delta(s)|^{p-2}W_\Delta^T(s)\big(\lambda(y_\Delta(s))-\lambda(\bar{y}_\Delta(s))+\lambda(\bar{y}_\Delta(s))-\frac{y_\Delta(s)}{\bar{y}_\Delta(s)}\lambda(\bar{y}_\Delta(s))\big)\mathrm{d}s\\
			&+\frac{p(p-1)(p^*-1)}{2(p^*-p)}\int_{0}^{t\wedge\bar{\rho}}|W_\Delta(s)|^{p-2}\Big|\sigma(y_\Delta(s))-\sigma(\bar{y}_\Delta(s))+\sigma(\bar{y}_\Delta(s))-\frac{y_\Delta(s)}{\bar{y}_\Delta(s)}\sigma(\bar{y}_\Delta(s))\Big|^2\mathrm{d}s.
		\end{flalign*}
		Here the Young inequality is used. Under Assumption \ref{as.3}, we obtain
		$\bar{M}_1\leq C\int_{0}^{t}\E|W_\Delta(s\wedge\bar{\rho})|^{p}\mathrm{d}s$ and
		\begin{flalign*}
			\bar{M}_2\leq&C\E\int_{0}^{t\wedge\bar{\rho}}|W_\Delta(s)|^p\mathrm{d}s+C\E\int_{0}^{t\wedge\bar{\rho}}\Big(|\lambda(y_\Delta(s))-\lambda(\bar{y}_\Delta(s))|^p+|1-\frac{y_\Delta(s)}{\bar{y}_\Delta(s)}|^p|\lambda(\bar{y}_\Delta(s))|^p\Big)\mathrm{d}s\\
			&+C\E\int_{0}^{t\wedge\bar{\rho}}\Big(|\sigma(y_\Delta(s))-\sigma(\bar{y}_\Delta(s))|^p+|1-\frac{y_\Delta(s)}{\bar{y}_\Delta(s)}|^p|\sigma(\bar{y}_\Delta(s))|^p\Big)\mathrm{d}s.
		\end{flalign*}
		Here the Young inequality is used. We obtain from Assumption \ref{as.1}, Remark \ref{Remark1} and the H\"older inequality that 
		\begin{flalign*}
			\bar{M}_2\leq& C\int_{0}^{t}\E|W_\Delta(s\wedge\bar{\rho})|^p\mathrm{d}s+C\int_{0}^{T}\Big(\E[1+|y_\Delta(s)|^{(1+\xi)\alpha p}+|\bar{y}_\Delta(s)|^{(1+\xi)\alpha p}+|y_\Delta(s)|^{-(1+\xi)\beta p}\\
			&+|\bar{y}_\Delta(s)|^{-(1+\xi)\beta p}]\Big)^{\frac{1}{1+\xi}}\big(\E|y_\Delta(s)-\bar{y}_\Delta(s)|^{\frac{(1+\xi)p}{\xi}}\big)^{\frac{\xi}{1+\xi}}\mathrm{d}s\\
			&+C\int_{0}^{T}\Big(\E\Big|1-\frac{y_\Delta(s)}{\bar{y}_\Delta(s)}\Big|^{\frac{(1+\xi)p}{\xi}}\Big)^{\frac{\xi}{1+\xi}}\big(\E[1+|\bar{y}_\Delta(s)|^{(1+\xi)(\alpha+1)p}+|\bar{y}_\Delta(s)|^{-(1+\xi)\beta p}]\big)^{\frac{1}{1+\xi}}\mathrm{d}s.
		\end{flalign*}
		By \eqref{yby}, Lemmas \ref{pp numerical integral} and \ref{Das1}, and Theorem 1.7.1 in \cite{M01}, we have
		\begin{flalign*}
			&\E[|y_\Delta(t)-\bar{y}_\Delta(t)|^{\frac{(1+\xi)p}{\xi}}]\\
			\leq& C\Delta^{{\frac{(1+\xi)p}{\xi}}-1}\E\int_{t_k}^{t}|y_\Delta(s)|^{\frac{(1+\xi)p}{\xi}}\Big|\tilde{\lambda}_\Delta(\bar{z}_\Delta(s))+\frac{1}{2}|\tilde{\sigma}_\Delta(\bar{z}_\Delta(s))|^2\Big|^{\frac{(1+\xi)p}{\xi}}\mathrm{d}s\\
			&+C\Delta^{\frac{(1+\xi)p}{2\xi}-1}\E\int_{t_k}^{t}|y_\Delta(s)|^{\frac{(1+\xi)p}{\xi}}|\tilde{\sigma}_\Delta(\bar{z}_\Delta(s))|^{\frac{(1+\xi)p}{\xi}}\mathrm{d}s\\
			\leq&C\Delta^{\frac{(1+\xi)p}{2\xi}}(\eta(\Delta))^{\frac{(1+\xi)p}{2\xi}}.
		\end{flalign*}
		With the aid of Lemmas \ref{pp numerical integral} and \ref{reLM3.1}, one can derive that 
		\begin{flalign}
			\bar{M}_2\leq C\int_{0}^{t}\E|W_\Delta(s\wedge\bar{\rho})|^p\mathrm{d}s+C\Delta^{\frac{p}{2}}(\eta(\Delta))^{\frac{p}{2}}.
		\end{flalign}
		Finally, the Gr\"onwall inequality implies that Lemma \ref{pmain} holds.
	\end{proof}
	
	\begin{theorem}\label{main}
		Suppose the conditions of Lemma \ref{pmain} are satisfied. If the inequality
		\begin{flalign}\label{hDi}
			\eta(\Delta)\geq \psi\Big(-\frac{J\ln(\Delta^{\frac{p}{2}}(\eta(\Delta))^{\frac{p}{2}})}{(J-p)(J\wedge K)}\Big)
		\end{flalign}
		holds for all sufficiently small $\Delta\in(0,1]$, then for all finite time $T>0$,
		\begin{equation*}
			\sup_{ t \in[0,T]}\E[|W_\Delta(t)|^p]\leq C\Delta^{\frac{p}{2}}(\eta(\Delta))^{\frac{p}{2}}.
		\end{equation*}
	\end{theorem}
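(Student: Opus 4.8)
The plan is to remove the stopping time $\bar\rho$ from the estimate of Lemma \ref{pmain} and thereby recover a bound on the unstopped error $\E[|W_\Delta(t)|^p]$. The starting point is the decomposition
\begin{equation*}
\E[|W_\Delta(t)|^p]=\E\big[|W_\Delta(t)|^p I_{\{\bar\rho>T\}}\big]+\E\big[|W_\Delta(t)|^p I_{\{\bar\rho\leq T\}}\big].
\end{equation*}
On the event $\{\bar\rho>T\}$ we have $t\wedge\bar\rho=t$ for every $t\in[0,T]$, so $W_\Delta(t)=W_\Delta(t\wedge\bar\rho)$ there, and dropping the indicator gives $\E[|W_\Delta(t)|^p I_{\{\bar\rho>T\}}]\leq\E[|W_\Delta(t\wedge\bar\rho)|^p]\leq C\Delta^{\frac{p}{2}}(\eta(\Delta))^{\frac{p}{2}}$ by Lemma \ref{pmain}. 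The whole difficulty is thus concentrated in the tail term on $\{\bar\rho\leq T\}$, where the exact or numerical solution may have left the region $\{|z|\leq n\}$ and the stopped estimate no longer applies.

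To control the tail term I would trade a higher moment of $W_\Delta$ against the smallness of $\PP(\bar\rho\leq T)$ by means of the Young inequality. Since $J>p$, for any $\epsilon>0$ the conjugate exponents $J/p$ and $J/(J-p)$ give
\begin{equation*}
\E\big[|W_\Delta(t)|^p I_{\{\bar\rho\leq T\}}\big]\leq\frac{p\epsilon}{J}\,\E[|W_\Delta(t)|^{J}]+\frac{J-p}{J}\,\epsilon^{-\frac{p}{J-p}}\,\PP(\bar\rho\leq T).
\end{equation*}
The $J$-th moment is uniformly bounded: writing $W_\Delta=y-y_\Delta$ and combining the moment bound of Lemma \ref{Lm2.1} for the exact solution with that of Lemma \ref{pp numerical integral} for the numerical solution yields $\sup_{t\in[0,T]}\E[|W_\Delta(t)|^{J}]\leq C$. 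For the probability, $\{\bar\rho\leq T\}\subseteq\{\rho_n^*\leq T\}\cup\{\rho_n\leq T\}$, so Remark \ref{tau*} and Corollary \ref{Corollary2} give $\PP(\bar\rho\leq T)\leq Ce^{-(J\wedge K)n}$.

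It remains to choose the free parameters $\epsilon$ and $n$ so that both contributions are of the target order. Taking $\epsilon=\Delta^{\frac{p}{2}}(\eta(\Delta))^{\frac{p}{2}}$ makes the first term $C\Delta^{\frac{p}{2}}(\eta(\Delta))^{\frac{p}{2}}$ at once, and reduces the tail term to the requirement
\begin{equation*}
e^{-(J\wedge K)n}\leq C\big(\Delta^{\frac{p}{2}}(\eta(\Delta))^{\frac{p}{2}}\big)^{\frac{J}{J-p}},
\end{equation*}
which after taking logarithms is met exactly by $n=n(\Delta):=-\tfrac{J\ln(\Delta^{\frac{p}{2}}(\eta(\Delta))^{\frac{p}{2}})}{(J-p)(J\wedge K)}$. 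Because \eqref{hD} forces $\Delta^{\frac{p}{2}}(\eta(\Delta))^{\frac{p}{2}}=(\Delta\,\eta(\Delta))^{\frac{p}{2}}\to0$, this $n(\Delta)$ tends to $+\infty$ as $\Delta\to0$, so $n(\Delta)>|\ln y_0|$ for all sufficiently small $\Delta$.

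The main obstacle, and precisely the role of hypothesis \eqref{hDi}, is the consistency requirement that this \emph{same} $n(\Delta)$ also be admissible for the first term: invoking Lemma \ref{pmain} needs $\psi^{-1}(\eta(\Delta))\geq n$. Now \eqref{hDi} reads exactly $\eta(\Delta)\geq\psi(n(\Delta))$, and applying the increasing map $\psi^{-1}$ gives $\psi^{-1}(\eta(\Delta))\geq n(\Delta)$. Hence one single choice of $n$ simultaneously activates the stopped estimate in the first term and renders the tail term of order $\Delta^{\frac{p}{2}}(\eta(\Delta))^{\frac{p}{2}}$; collecting the three pieces yields $\sup_{t\in[0,T]}\E[|W_\Delta(t)|^p]\leq C\Delta^{\frac{p}{2}}(\eta(\Delta))^{\frac{p}{2}}$, completing the proof.
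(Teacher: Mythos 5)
Your proposal is correct and follows essentially the same route as the paper's proof: the same decomposition over $\{\bar\rho>T\}$ and $\{\bar\rho\leq T\}$, the same Young-inequality trade-off with exponents $J/p$ and $J/(J-p)$, the same choice $\epsilon=\Delta^{\frac{p}{2}}(\eta(\Delta))^{\frac{p}{2}}$ and $n=-\frac{J\ln(\Delta^{\frac{p}{2}}(\eta(\Delta))^{\frac{p}{2}})}{(J-p)(J\wedge K)}$, and the same invocation of Lemma \ref{pmain}, Remark \ref{tau*} and Corollary \ref{Corollary2}. In fact you are slightly more careful than the paper in two spots: you spell out why \eqref{hDi} is exactly the condition $\psi^{-1}(\eta(\Delta))\geq n(\Delta)$ needed to apply Lemma \ref{pmain} with this $n$, and you correctly attribute the $J$-th moment bound of $y_\Delta$ to Lemma \ref{pp numerical integral} (the paper's citation of Lemma \ref{LM3.1} there appears to be a typo).
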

	\begin{proof}
		We first perform the following decomposition
		\begin{flalign*}
			\sup_{ t \in[0,T]}\E[|W_\Delta(t)|^p]=\sup_{ t \in[0,T]}\E[|W_\Delta(t)|^pI_{\{\bar{\rho}>T\}}]+\sup_{ t \in[0,T]}\E[|W_\Delta(t)|^pI_{\{\bar{\rho}\leq T\}}]=:I_1+I_2.
		\end{flalign*}
		Using the Young inequality, Lemmas \ref{Lm2.1} and \ref{LM3.1}, Remark \ref{tau*} and Corollary  \ref{Corollary2} yields that
		\begin{flalign*}
		I_2	=&\sup_{ t \in[0,T]}\E[|W_\Delta(t)|^p\rho^{\frac{p}{J}}I_{\{\bar{\rho}\leq T\}}\rho^{-\frac{p}{J}}]\\
			\leq& \frac{p}{J}\sup_{ t \in[0,T]}\E|W_\Delta(t)|^{J}\rho+\frac{J-p}{J}\PP(\bar{\rho}\leq T)\rho^{-\frac{p}{J-p}}\\
			\leq&C\rho +C(\PP(\rho_n^*\leq T)+\PP(\rho_n\leq T))\rho^{-\frac{p}{J-p}}\\
			\leq&C\rho+Ce^{-(J\wedge K)n}\rho^{-\frac{p}{J-p}}
		\end{flalign*}
		for $\rho>0$. Choosing 
		\begin{flalign*}
			\rho = \Delta^{\frac{p}{2}}(\eta(\Delta))^{\frac{p}{2}}\quad\text{and}\quad n=-\frac{J\ln(\Delta^{\frac{p}{2}}(\eta(\Delta))^{\frac{p}{2}})}{(J-p)(J\wedge K)},
		\end{flalign*}
		we derive 
		\begin{flalign}
			I_2\leq C\Delta^{\frac{p}{2}}(\eta(\Delta))^{\frac{p}{2}}.
		\end{flalign}
		Using Lemma \ref{pmain}, we obtain
		\begin{flalign*}
			I_1\leq C\Delta^{\frac{p}{2}}(\eta(\Delta))^{\frac{p}{2}}.
		\end{flalign*}
		The proof is therefore completed.
	\end{proof}
	
	\subsection{Example and discussion}
	\begin{example}\label{Example1}
		Consider $d$-dimensional stochastic LV competition model
		\begin{flalign}\label{LV}
			\mathrm{d}y(t)&=\mathrm{diag}(y_1(t),y_2(t)\cdots,y_d(t))[f(y(t))\mathrm{d}t+\mu\mathrm{d}B(t)] \nonumber\\
			&=:\lambda(y(t))\mathrm{d}t+\sigma(y(t))\mathrm{d}B(t),
		\end{flalign}
		where $f(y)=(f^1(y),f^2(y),\cdots,f^d(t))^T=b+Ay$, the parameters $b=(b_1,b_2,\cdots,b_d)^T$, $A=(a_{ij})_{d\times d}$ and $\mu=(\mu_1,\mu_2,\cdots,\mu_d)^T$. For any $s,t\in\RR_+^d$, we define $\mathrm{L}(s,t):=\{s+l(t-s)|l\in[0,1]\}$. The mean value theorem indicates 
		\begin{flalign*}
			\lambda(s)-\lambda(t)=\lambda(w)(s-t),
		\end{flalign*}
		where a point $w\in \mathrm{L}(s,t)$. Due to $D\lambda(y)=b+2\mathrm{diag}(y_1,y_2,\cdots,y_d)A$, we can derive  
		\begin{flalign*}
			|\lambda(s)-\lambda(t)|\leq|D\lambda(u)||s-t|\leq C(1+|s|+|t|)|s-t|.
		\end{flalign*}
		Thus we see that Assumption \ref{as.1} holds with $\alpha=1$ and $\beta=0$. Under the parameter $a_{ij}\leq 0$ for all $1\leq i,j\leq d$ in \cite{YXF2024}, for any $i\in\{1,2,\cdots,d\}$ with $y_i\in(0,y_i^*)$, we derive
		\begin{flalign*}
			y_i\lambda^i(y)-\frac{K+1}{2}|\sigma_i(y)|^2=b_iy_i^2+\sum_{j=1}^{d}a_{ij}y_jy_i^2-\frac{K+1}{2}\mu_i^2y_i^2
			\geq y_i^2(b_i-\frac{K+1}{2}\mu_i^2).
		\end{flalign*}
		That is to say, if $y_i^2\Big(b_i-\frac{K+1}{2}\mu_i^2\Big)\geq 0$ for $i\in\{1,2,\cdots,d\}$, then there exists a sufficiently small $y_i^*>0$ such that for $ y_i\in(0,y_i^*)$,
		\begin{flalign*}
			y_i\lambda^i(y)-\frac{K+1}{2}|\sigma_i(y)|^2\geq0.
		\end{flalign*}
		Furthermore, for any $i\in\{1,2,\cdots,d\}$ with $y_i\in[y_i^*,\infty)$, we get
		\begin{flalign*}
			y_i\lambda^i(y)+\frac{J-1}{2}|\sigma_i(y)|^2\leq C(1+y_i^2)
		\end{flalign*}
		since the left side of the inequality above tends to negative infinite as $y_i\rightarrow \infty$. Therefore, Assumptions \ref{as.1} and \ref{as.3} hold with $y_i^2\Big(b_i-\frac{K+1}{2}\mu_i^2\Big)\geq 0$, where $i\in\{1,2,\cdots,d\}$. \\
		\indent Take $ b_1 = 2, b_2 =4,a_{11}=-4,a_{22}=-4,\mu_1=1,\mu_2= 2$ and other unspecified parameters as zero.\\
		\indent By the It\^o formula, we get 
		\begin{flalign*}
			&\mathrm{d}z_1(t)=\big(1.5-4e^{z_1(t)}\big)\mathrm{d}t+\mathrm{d}B_1(t),\nonumber\\
			&\mathrm{d}z_2(t)=\big(2-4e^{z_2(t)}\big)\mathrm{d}t+2\mathrm{d}B_2(t).
		\end{flalign*}
		Noticing that
		\begin{equation*}
			\sup_{|z|\leq v} \big(|\tilde{\lambda}(z)| \vee |\tilde{\sigma}(z)|^2 \big) \leq 4e^{r}, \quad \forall v > 0.
		\end{equation*}
		We can set $\psi(v)=4e^{r}$. Then it holds that $\psi^{-1}(r)=\ln\frac{r}{4}$. Let $\eta(\Delta)=\Delta^{-\epsilon}$ for $\epsilon\in(0,\frac{1}{2})$. Then the inequality \eqref{hDi} becomes 
		\begin{flalign*}
			\Delta^{-\epsilon}\geq 4e^{-\frac{J\ln(\Delta^{\frac{p}{2}(1-\epsilon)})}{(J-p)(J\wedge K)}}\quad i.e.\quad 1\geq 4\Delta^{\epsilon-\frac{\bar{p}p(1-\epsilon)}{(\bar{p}-p)(\bar{p}\wedge\bar{q})}}.
		\end{flalign*}
		However, for $\epsilon\in(0,\frac{1}{2})$, we can always choose sufficiently large $J$ and $K$ such that $\epsilon-\frac{Jp(1-\epsilon)}{(J-p)(J\wedge K)}>0$. Therefore, we can derive from Theorem \ref{main} that 
		\begin{equation*}
			\E[|W_\Delta(t)|^p]\leq C\Delta^{\frac{p}{2}(1-\epsilon)}, \quad\forall  \Delta\in(0,1].
		\end{equation*}
		This example exhibits that the order of $L^p$-convergence of the LTEM method is close to $\frac{p}{2}$.
	\end{example}
	
	\section{One-dimensional case}
	The LTEM method in one-dimensional case, has been proposed in \cite{PPlogTM}, opens a new chapter for explicitly solving \eqref{resde}. This work exhibits its suboptimal strong convergence rate. Subsequently, in \cite{LG2023}, the authors consider the LTEM method with weaker condition. The proposed method has optimal strong convergence rate. Regrettably, these two types of methods still have shortcomings (see e.g., Example 2 in \cite{arxivhu}). Recently, the LTEM method, further studied in \cite{TX2024}, has been applied to solve scalar SDEs with positive solutions with weak conditions. This method effectively lifts some parameter restrictions, but its strong convergence rate remains suboptimal.\\
	\indent In previous section, we derive the suboptimal convergence rate of the LTEM method for multi-dimensional SDE. The corollary of the suboptimal convergence rate in the multi-dimensional case, when restricted to the one-dimensional case, is consistent with the results of \cite{TX2024}. In this section, we will apply a proof strategy to theoretically improve the error results, thereby obtaining the optimal convergence rate of the LTEM method for the scalar SDE.\\
	\indent Consider the scalar SDE with positive solutions
	\begin{equation}\label{resde}
		\mathrm{d}y(t)=\lambda(y(t))\mathrm{d}t+\sigma(y(t))\mathrm{d}B(t),\quad 0<t\leq T, \quad y(0)=y_0\in \RR_+.
	\end{equation}
	\indent When $d=1$, Assumption \ref{as.1} degenerates into the following conditions.
	\begin{assumption}\label{reas.1}
		Suppose that $\lambda$ and $\sigma$ satisfy the non-globally Lipschitz condition: for all $\tilde{y},\hat{y}\in \RR_+$,
		\begin{flalign*}
			|\lambda(\tilde{y})-\lambda(\hat{y})|\vee|\sigma(\tilde{y})-\sigma(\hat{y})|\leq L_1(1+\tilde{y}^\alpha+\hat{y}^\alpha+\tilde{y}^{-\beta}+\hat{y}^{-\beta})|\tilde{y}-\hat{y}|,
		\end{flalign*} 
		where $L_1>0$, $\alpha>0$ and $\beta>0$. Besides, there exist some positive constants $y^*,H,K$ along with $J>1$ such that for any $\check{y}\in\RR_+$,
		\begin{flalign*}
			\left\{ 
			\begin{aligned}
				&\check{y}\lambda(\check{y})-\frac{K+1}{2}|\sigma(\check{y})|^2\geq0, &\check{y}&\in(0,y^*);\\
				&\check{y}\lambda(\check{y})+\frac{J-1}{2}|\sigma(\check{y})|^2\leq H(1+\check{y}^2), &\check{y}&\in[y^*,\infty).
			\end{aligned}
			\right.
		\end{flalign*}
	\end{assumption}
	
	\begin{remark}\label{reRemark1}
		As noted in Remark 2.1 in \cite{TX2024}, we may deduce from Assumption \ref{as.1} that there exists a constant $C_0>1$ such that 
		\begin{flalign*}
			|\lambda(y)|\leq C_0(1+y^{\alpha+1}+y^{-\beta})\quad \text{and}\quad |\sigma(y)|^2\leq C_0(1+y^{\alpha+2}+y^{-\beta+1}), \quad \forall y\in\RR_+. 
		\end{flalign*}
	\end{remark}
	
	\indent In \cite{TX2024}, the moment and inverse moment bounds of the analytical solutions, which we present in the following lemma, were derived using Assumption \ref{reas.1}.
	\begin{lemma}(Lemma 2.1 in \cite{TX2024})\label{Lm4.1}
		Suppose Assumption \ref{reas.1} holds with $\alpha\vee(\beta+1)\leq J+K$. Then SDE \eqref{resde} has unique strong solution $\{y(t)\}_{t\in[0,T]}$, and
		\begin{flalign*}
			\PP(y(t)\in\RR_+, \forall t\in[0,T])=1.
		\end{flalign*}
		Define $\theta$ is an arbitrary stopping time. It satisfies that 
		\begin{flalign*}
			\sup_{ t \in[0,T]}\E[|y(t\wedge\theta)|^{J}]<\infty  \quad \text{and} \quad \sup_{ t \in[0,T]}\E[|y(t\wedge\theta)|^{-K}]< \infty.
		\end{flalign*}
	\end{lemma}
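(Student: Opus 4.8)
The plan is to run a standard Khasminskii-type argument built around the Lyapunov function $V(y)=y^{J}+y^{-K}$, $y\in\RR_+$, which simultaneously tracks the moment and the inverse-moment bounds. First I would record that the non-globally Lipschitz bound in Assumption~\ref{reas.1} makes $\lambda$ and $\sigma$ locally Lipschitz on every compact subset of $(0,\infty)$, since the factor $1+\tilde y^\alpha+\hat y^\alpha+\tilde y^{-\beta}+\hat y^{-\beta}$ is bounded there, staying away from both $0$ and $\infty$. Classical SDE theory then yields a unique maximal strong solution up to the exit time $\tau_e=\inf\{t\ge0:y(t)\notin(0,\infty)\}$, and pathwise uniqueness is inherited from local Lipschitzness. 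It remains to show $\tau_e>T$ a.s., which simultaneously gives global existence and strict positivity, together with the two uniform bounds.

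Then I would apply It\^o's formula to $V$. A direct computation gives the generator
\begin{equation*}
\mathcal{L}V(y)=Jy^{J-2}\Big(y\lambda(y)+\tfrac{J-1}{2}|\sigma(y)|^2\Big)+Ky^{-K-2}\Big(-y\lambda(y)+\tfrac{K+1}{2}|\sigma(y)|^2\Big)=:T_1(y)+T_2(y).
\end{equation*}
The decisive step is to prove $\mathcal{L}V(y)\le C(1+V(y))$ for all $y\in\RR_+$, which I would establish by splitting $\RR_+$ at $y^{*}$. On $(0,y^{*})$ the lower Khasminskii condition gives $-y\lambda(y)+\tfrac{K+1}{2}|\sigma(y)|^2\le0$, so $T_2(y)\le0$, while $T_1(y)$ is bounded using Remark~\ref{reRemark1}; the only term that can blow up as $y\to0^{+}$ is of order $y^{J-\beta-1}$, which is dominated by $y^{-K}\le V(y)$ precisely when $\beta+1\le J+K$. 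Symmetrically, on $[y^{*},\infty)$ the upper Khasminskii condition gives $T_1(y)\le JHy^{J-2}(1+y^2)\le C(1+V(y))$, while for $T_2(y)$ the only term growing as $y\to\infty$ is of order $y^{\alpha-K}$, which is dominated by $y^{J}\le V(y)$ precisely when $\alpha\le J+K$. Thus the hypothesis $\alpha\vee(\beta+1)\le J+K$ is exactly what makes $\mathcal{L}V\le C(1+V)$ hold on all of $\RR_+$.

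With this inequality in hand I would localize: set $\theta_k=\inf\{t:y(t)\notin(1/k,k)\}$, apply It\^o to $V(y(t\wedge\theta_k))$, take expectations so the martingale part vanishes, and use $\mathcal{L}V\le C(1+V)$ to obtain $\E[V(y(t\wedge\theta_k))]\le V(y_0)+C\int_0^t\big(1+\E[V(y(s\wedge\theta_k))]\big)\,\mathrm{d}s$. Gr\"onwall's inequality then yields a bound $C_T$ uniform in $k$. Since $V\to\infty$ at both endpoints $1/k$ and $k$, the estimate $\PP(\theta_k\le T)\cdot\min_{y\in\{1/k,k\}}V(y)\le C_T$ forces $\PP(\theta_k\le T)\to0$, so $\theta_\infty=\lim_k\theta_k\ge T$ a.s.; this gives $\tau_e>T$ a.s., hence global existence and positivity. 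Fatou's lemma upgrades the uniform bound to $\sup_{t\in[0,T]}\E[V(y(t))]\le C_T$, which contains both $\sup_t\E[|y(t)|^{J}]<\infty$ and $\sup_t\E[|y(t)|^{-K}]<\infty$. For an arbitrary stopping time $\theta$ I would repeat the same Gr\"onwall estimate with $t\wedge\theta\wedge\theta_k$ in place of $t\wedge\theta_k$ and pass to the limit, using that the solution is now known to be non-explosive so that $t\wedge\theta\wedge\theta_k\to t\wedge\theta$.

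The main obstacle is the region-splitting estimate $\mathcal{L}V\le C(1+V)$: one must keep separate track of the behaviour near $0$ and near $\infty$, use the two one-sided Khasminskii inequalities to annihilate the genuinely dangerous terms ($T_2$ near $0$ and $T_1$ near $\infty$), and then verify that the surviving terms are matched by $V$ under the single scalar constraint $\alpha\vee(\beta+1)\le J+K$. Everything after this reduction is routine localization, Gr\"onwall, and Fatou.
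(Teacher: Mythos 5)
The paper never proves this lemma itself---it is quoted verbatim as Lemma 2.1 of \cite{TX2024}---so your argument stands on its own. Your overall route (local Lipschitzness on compacts of $(0,\infty)$, the Lyapunov function $V(y)=y^{J}+y^{-K}$, the splitting $\mathcal{L}V=T_1+T_2$ at $y^{*}$, localization, Gr\"onwall, Fatou, and the extension to an arbitrary stopping time) is the standard and correct one, and everything downstream of the key inequality $\mathcal{L}V\le C(1+V)$ is sound. The gap is inside that key inequality, on the region $(0,y^{*})$. You bound $T_1$ ``using Remark~\ref{reRemark1}'' and assert that the only term that can blow up as $y\to0^{+}$ is of order $y^{J-\beta-1}$. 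That is not true: Remark~\ref{reRemark1} gives $|\sigma(y)|^{2}\le C_{0}(1+y^{\alpha+2}+y^{1-\beta})$, and the constant $1$ inside produces the term $\tfrac{J(J-1)}{2}C_{0}\,y^{J-2}$ in $T_1$, which also blows up when $1<J<2$. Dominating $y^{J-2}$ by $C(1+y^{-K})$ requires $J+K\ge2$, and this is \emph{not} implied by the hypothesis $\alpha\vee(\beta+1)\le J+K$: take, e.g., $\beta=\tfrac12$, $J=1.2$, $K=0.4$, so that $J+K=1.6\ge\beta+1=1.5$, yet $y^{J-2}=y^{-0.8}$ is not $O(1+y^{-0.4})$ near zero. (Your claim is fine when $\beta\ge1$, since then $y^{J-2}\le y^{J-\beta-1}$ for $y<1$; the problematic regime, allowed by the hypotheses, is $\beta<1$ with $J+K<2$.)

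The repair is short and uses an ingredient you already exploit for $T_2$: on $(0,y^{*})$ the lower Khasminskii condition gives $\tfrac{K+1}{2}|\sigma(y)|^{2}\le y\lambda(y)$ (in particular $\lambda\ge0$ there), hence
\begin{equation*}
T_1(y)\le J\Bigl(1+\tfrac{J-1}{K+1}\Bigr)y^{J-1}\lambda(y)\le C\,y^{J-1}\bigl(1+y^{\alpha+1}+y^{-\beta}\bigr)\le C\bigl(1+y^{J-\beta-1}\bigr)\le C\bigl(1+y^{-K}\bigr),
\end{equation*}
where the last two steps use only $J>1$, $\alpha>0$ and exactly $\beta+1\le J+K$. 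In other words, near zero you must control $|\sigma|^{2}$ through the Khasminskii condition itself (which forces $|\sigma(y)|^{2}\le Cy|\lambda(y)|\le C(y+y^{\alpha+2}+y^{1-\beta})$, with no spurious constant term), rather than through the global bound of Remark~\ref{reRemark1}. With this substitution, your inequality $\mathcal{L}V\le C(1+V)$ holds on all of $\RR_{+}$ under the stated hypothesis, and the remainder of your proof (localization at $\theta_{k}$, Gr\"onwall, the exit-probability estimate, Fatou, and the stopped-moment bounds) goes through unchanged.
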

	
	\subsection{The LTEM mthod}
	\indent To construct the LTEM method, the following transformed SDE are derived using the It\^o formula
	\begin{flalign*}
		\mathrm{d}z(t)=\tilde{\lambda}(z(t))\mathrm{d}t+\tilde{\sigma}(z(t))\mathrm{d}B(t),
	\end{flalign*}
	where 
	\begin{flalign}\label{reFGDD}
		\tilde{\lambda}(z)=e^{-z}\lambda(e^z)-\frac{1}{2}e^{-2z}|\sigma(e^z)|^2\quad\text{and}\quad \tilde{\sigma}(z)=e^{-z}\sigma(z)
	\end{flalign}
	for $z\in\RR$.\\
	\indent  We evaluate from Remark \ref{reRemark1} that 
	\begin{flalign}\label{perserve}
		|\tilde{\lambda}(z)|\vee|\tilde{\sigma}(z)|^2\leq C_0(1+e^{\alpha z}+e^{-(\beta+1)z}).
	\end{flalign}
	To begin with, we define the function $\phi(r)=4C_0e^{(\alpha\vee(\beta+1))r}$, which is strictly increasing and satisfies 
	\begin{equation*}
		\sup_{|z|\leq r} \big(|\lambda(z)| \vee |\sigma(z)|^2\big) \leq \phi(r), \quad \forall r > 0.
	\end{equation*}
	Then we defined $\phi^{-1}$ as the inverse function $\phi$, which has the property that $(4C_0, \infty) \rightarrow (0,\infty)$ and is also increasing. Besides, we choose a strictly decreasing function $\eta:(0,1]\rightarrow (4C_0, \infty)$ satisfying the following property 
	\begin{equation}\label{rehD}
		\lim_{\Delta \rightarrow 0}\eta(\Delta) =\infty \quad \mbox{and} \quad  \Delta \eta(\Delta) \leq J_0,
	\end{equation}
	where $J_0$ is a positive constant with $J_0\geq1\vee4C_0$. Fix $\Delta \in (0,1]$, let $\tilde{\lambda}_\Delta(z)$ and $\tilde{\sigma}_\Delta(z)$, referred as truncated functions, be defined as follows
	\begin{flalign*}
		\tilde{\lambda}_\Delta(z):=\left\{ 
		\begin{aligned}
			&\tilde{\lambda}\Big((|z|\wedge\phi^{-1}(\eta(\Delta)))\frac{z}{|z|}\Big), &z&\in\RR^d\setminus \{0\};\\
			&0, &z&=0
		\end{aligned}
		\right .
	\end{flalign*}
	and
	\begin{flalign*}
		\tilde{\sigma}_\Delta(z):=\left\{ 
		\begin{aligned}
			&\tilde{\sigma}\Big((|z|\wedge\phi^{-1}(\eta(\Delta)))\frac{z}{|z|}\Big), &z&\in\RR^d\setminus \{0\};\\
			&0, &z&=0. 
		\end{aligned}
		\right .
	\end{flalign*} 
	Clearly,
	\begin{equation}\label{rebbound}
		|\tilde{\lambda}_\Delta(z)|\vee|\tilde{\sigma}_\Delta(z)|^2\leq \phi(\phi^{-1}(\eta(\Delta)))=\eta(\Delta)
	\end{equation}
	for any $z\in\RR$.\\
	\indent The LTEM method for the original SDE \eqref{resde} is the special case of in Section 2, which is created by
	\begin{equation}\label{rebxx}
		y_\Delta(t)=e^{z_\Delta(t)}, \quad \forall t\in[0,T].
	\end{equation}
	
	\indent In order to develop a strong convergence theory for the LTEM method, we introduce several properties of the numerical solutions.
	\begin{lemma}\label{repp numerical integral}(Lemmas 3.1 and 3.2 in \cite{TX2024})
		For any real number $p$, it holds
		\begin{equation*}
			\sup_{\Delta\in (0,1]}\sup_{ t \in[0,T]}\E\Big[\Big|\frac{y_\Delta(t)}{\bar{y}_\Delta(t)}\Big|^p\Big]\leq C_p,
		\end{equation*}
		where $C_p$ depends on $p$. Furthermore, suppose Assumption \ref{reas.1} holds with $\alpha\vee(\beta+1)\leq J+K$ and define $\theta^*$ is an arbitrary stopping time. Then it holds
		\begin{flalign*}
			\sup_{\Delta\in (0,1]}\sup_{ t \in[0,T]}\E[|y_\Delta(t\wedge\theta^*)|^{J}]\leq C \quad \text{and} \quad \sup_{\Delta\in (0,1]}\sup_{ t \in[0,T]}\E[|y_\Delta(t\wedge\theta^*)|^{-K}]\leq C.
		\end{flalign*}
	\end{lemma}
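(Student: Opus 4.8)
The two assertions are the scalar specializations of Lemmas \ref{LM3.1} and \ref{pp numerical integral}, so the plan is to transcribe those two proofs to the one-dimensional setting, the only substantive change being the passage from the scaling \eqref{hD} to \eqref{rehD}.

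For the ratio bound I would begin from the one-step interpolation identity obtained exactly as in \eqref{12}, namely $y_\Delta(t)/\bar{y}_\Delta(t)=\exp\big(\tilde{\lambda}_\Delta(\bar{z}_\Delta(t))(t-t_k)+\tilde{\sigma}_\Delta(\bar{z}_\Delta(t))(B(t)-B(t_k))\big)$ for $t\in[t_k,t_{k+1})$. Raising this to the power $p$ (allowing $p$ of either sign), using the truncation bound \eqref{rebbound} in the forms $|\tilde{\lambda}_\Delta|\le\eta(\Delta)$ and $|\tilde{\sigma}_\Delta|\le\sqrt{\eta(\Delta)}$, and applying Lemma \ref{multi} with $m=1$ and $\gamma=|p|\sqrt{\eta(\Delta)}$ to the Gaussian increment $B(t)-B(t_k)$ (whose variance is at most $\Delta$), I would obtain
\begin{equation*}
\E\Big[\Big|\frac{y_\Delta(t)}{\bar{y}_\Delta(t)}\Big|^p\Big]\le 2\,e^{|p|\eta(\Delta)\Delta+\frac{p^2\eta(\Delta)\Delta}{2}}.
\end{equation*}
The essential difference from the multi-dimensional case is that \eqref{rehD} controls $\eta(\Delta)\Delta\le J_0$ directly (rather than $\Delta^{1/2}\eta(\Delta)\le M_0$), so the exponent is bounded uniformly in $\Delta$, yielding the $\Delta$-independent constant $C_p$ and hence the supremum over $\Delta\in(0,1]$ and $t\in[0,T]$.

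For the moment and inverse-moment bounds I would follow the structure of Lemma \ref{pp numerical integral}. Introducing the localizing stopping time $\rho_n=\inf\{t:|z_\Delta(t)|\ge n\}$ and working on $[0,t\wedge\theta^*\wedge\rho_n]$, where the truncated coefficients coincide with $\tilde{\lambda},\tilde{\sigma}$, I would apply the It\^o formula to $|y_\Delta(\cdot)|^{J}+|y_\Delta(\cdot)|^{-K}$ and rewrite the frozen-coefficient drift through the logarithmic-transform relations \eqref{reFGDD}, so that the combinations $\check{y}\lambda(\check{y})-\frac{K+1}{2}|\sigma(\check{y})|^2$ and $\check{y}\lambda(\check{y})+\frac{J-1}{2}|\sigma(\check{y})|^2$ of Assumption \ref{reas.1} emerge at $\check{y}=\bar{y}_\Delta(s)$. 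Splitting each integral over $\{\bar{y}_\Delta<y^*\}$ and $\{\bar{y}_\Delta\ge y^*\}$, the near-zero part of the inverse moment and the near-infinity part of the moment are killed by the sign conditions, while the complementary regions are dominated by $C\big(1+|y_\Delta|^{J}+|y_\Delta|^{-K}\big)$ via the growth estimates of Remark \ref{reRemark1}. This closes into
\begin{equation*}
\sup_{s\le t}\E[U(s\wedge\theta^*\wedge\rho_n)]\le C+C\int_0^t\sup_{u\le s}\E[U(u\wedge\theta^*\wedge\rho_n)]\,\mathrm{d}s,\qquad U=|y_\Delta|^{J}+|y_\Delta|^{-K},
\end{equation*}
and Gr\"onwall's inequality gives a bound independent of $n$, $\Delta$ and $\theta^*$. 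Letting $n\to\infty$, invoking the Corollary \ref{Corollary2} estimate $\PP(\rho_n\le T)\le Ce^{-(J\wedge K)n}\to0$ (so $\rho_n\to\infty$ a.s. and $t\wedge\theta^*\wedge\rho_n\to t\wedge\theta^*$), and applying Fatou's lemma then removes the localization and yields both claimed bounds for $y_\Delta(t\wedge\theta^*)$.

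The main obstacle, as already in Lemma \ref{pp numerical integral}, is the frozen ratio $y_\Delta(s)/\bar{y}_\Delta(s)$: since the coefficients are evaluated at $\bar{z}_\Delta$ while $y_\Delta(s)$ continues to evolve across the step, the It\^o expansion does not produce the clean quantities at $\bar{y}_\Delta(s)$ but rather those quantities multiplied by powers of $y_\Delta/\bar{y}_\Delta$. Absorbing these mixed factors into constants — by H\"older's inequality together with the uniform ratio bound from the first part, and without destroying the sign structure that tames the small-$y$ and large-$y$ regimes — is the delicate step; once it is done, the remaining Gr\"onwall-and-Fatou argument is routine.
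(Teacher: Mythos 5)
First, a point of reference: the paper itself contains no proof of this lemma --- it is imported verbatim from Lemmas 3.1 and 3.2 of \cite{TX2024} --- so your reconstruction can only be judged against the multi-dimensional analogues (Lemmas \ref{LM3.1} and \ref{pp numerical integral}) that you propose to specialize. For the ratio bound your argument is correct and is exactly that specialization: the one-step exponential identity, the truncation bound \eqref{rebbound}, Lemma \ref{multi} with $m=1$, and the scalar scaling \eqref{rehD} (which controls $\Delta\eta(\Delta)$ directly) give $\E[|y_\Delta(t)/\bar{y}_\Delta(t)|^p]\le 2e^{|p|\eta(\Delta)\Delta+p^2\eta(\Delta)\Delta/2}\le C_p$ uniformly in $\Delta\in(0,1]$ and $t\in[0,T]$.

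The gap is in the moment and inverse-moment part, and it is not the step you flag as delicate but the hypothesis under which that step must be closed. Your premise that ``the only substantive change is the passage from \eqref{hD} to \eqref{rehD}'' is false: the parameter condition also changes, from $J\ge2(\alpha+1)$ and $K\ge2\beta$ in Lemma \ref{pp numerical integral} to the much weaker $\alpha\vee(\beta+1)\le J+K$ here, which permits, for instance, $K<\alpha$ provided $J$ compensates. A verbatim transcription cannot reach this. Concretely, after the It\^o step and the split at $y^*$, the dangerous terms are of the form $\E\big[(\tfrac{y_\Delta}{\bar{y}_\Delta})^{J}\,\bar{y}_\Delta^{\,J-\beta-1}I_{\{\bar{y}_\Delta<y^*\}}\big]$ and $\E\big[(\tfrac{\bar{y}_\Delta}{y_\Delta})^{K}\,\bar{y}_\Delta^{\,\alpha-K}I_{\{\bar{y}_\Delta\ge y^*\}}\big]$ (a literal copy of the multi-dimensional proof produces even worse exponents, $J-2\beta-2$ and $2\alpha+2-K$, because Remark \ref{Remark1} squares the drift-type bound). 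The multi-dimensional proof disposes of such terms by demanding that the power of $\bar{y}_\Delta$ have the favourable sign on the indicated set --- that is precisely what its stronger parameter conditions are for --- and your proposed fix, H\"older plus the uniform ratio bound, only replaces the exponent $J-\beta-1$ (resp.\ $\alpha-K$) by $(J-\beta-1)q'$ (resp.\ $(\alpha-K)q'$) with $q'>1$, which is worse in the regime the stated hypothesis allows and fails outright in the borderline case $\alpha\vee(\beta+1)=J+K$. What is actually needed, and what the proof in \cite{TX2024} exploits, is twofold: (i) the sharper one-dimensional growth bound of Remark \ref{reRemark1}, $|\sigma(y)|^2\le C_0(1+y^{\alpha+2}+y^{1-\beta})$, which is not the square of the drift bound; and (ii) the ratio estimate in \emph{conditional} form: since $\bar{y}_\Delta(s)$ is $\F_{t_k}$-measurable and $\E[(y_\Delta(s)/\bar{y}_\Delta(s))^{r}\mid\F_{t_k}]\le C_r$, the mixed terms collapse to $C\,\E[\bar{y}_\Delta^{\,J-\beta-1}]$ and $C\,\E[\bar{y}_\Delta^{\,\alpha-K}]$, and these are dominated by $C(1+\E[\bar{y}_\Delta^{\,J}]+\E[\bar{y}_\Delta^{\,-K}])$ precisely because $-K\le J-\beta-1$ and $\alpha-K\le J$, i.e.\ because $\alpha\vee(\beta+1)\le J+K$. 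Without this conditioning device your Gr\"onwall loop closes only under the stronger multi-dimensional parameter conditions, not under the hypothesis the lemma actually states.
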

	\subsection{The optimal convergence rate of the LTEM method in one-dimensional case}
	\indent To achieve the main results, it is necessary to impose the another condition on $\lambda$ and $\sigma$. For the case when $d=1$, Assumption \ref{as.3} is given as follows.
	\begin{assumption}\label{reas.3}
		There exist two positive constants $p^*>2$ and $L_2$ such that the inequality
		\begin{flalign*}
			(\tilde{y}-\hat{y})(\lambda(\tilde{y})-\lambda(\hat{y}))+\frac{p^*-1}{2}| \sigma(\tilde{y})-\sigma(\hat{y}) |^2\leq L_2| \tilde{y}-\hat{y}|^2
		\end{flalign*}
		holds for all $\tilde{y}, \hat{y}\in \RR_+$.
	\end{assumption}
	
	Define $\tilde{W}_\Delta(t)=y(t)-y_\Delta(t)$ and 
	\begin{flalign*}
		\theta=\inf\{t\in[0,T]:|z(t)|\geq R\}\quad\text{and}\quad \theta^*=\inf\{t\in[0,T]:|z_\Delta(t)|\geq R\}
	\end{flalign*}
	for any given $R>|\ln y_0|$, and set $\bar{\theta}=\theta\wedge\theta^{*}$. From Remark \ref{reRemark1}, we evaluate the truncated functions $\tilde{\lambda}_\Delta(x)$ and $\tilde{\sigma}_\Delta(x)$ as follows, which helps us to eliminate the infinitesimal factor $\eta(\Delta)$ in theory.
	\begin{lemma}\label{Das1}
		Suppose Assumption \ref{reas.1} holds. Then for all $\Delta\in(0,1]$,
		\begin{equation}\label{Das1eq}
			|\tilde{\lambda}_\Delta(z)|\vee
			|\tilde{\sigma}_\Delta(z)|^2\leq C(1+e^{\alpha z}+e^{-(\beta+1)z}).
		\end{equation}
	\end{lemma}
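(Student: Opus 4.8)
The plan is to show that the truncated coefficients inherit the growth bound \eqref{perserve} enjoyed by the untruncated $\tilde{\lambda},\tilde{\sigma}$, but with a constant $C$ that does not depend on $\Delta$. The starting point is the definition of the truncation itself: for $z\neq 0$ one has $\tilde{\lambda}_\Delta(z)=\tilde{\lambda}(w)$ and $\tilde{\sigma}_\Delta(z)=\tilde{\sigma}(w)$, where $w:=(|z|\wedge\phi^{-1}(\eta(\Delta)))\frac{z}{|z|}$ is the radial truncation of $z$ onto the ball of radius $\phi^{-1}(\eta(\Delta))$. Applying \eqref{perserve} at the point $w$ gives immediately
\begin{flalign*}
|\tilde{\lambda}_\Delta(z)|\vee|\tilde{\sigma}_\Delta(z)|^2\leq C_0\big(1+e^{\alpha w}+e^{-(\beta+1)w}\big),
\end{flalign*}
so the entire lemma reduces to controlling this right-hand side, which is written in terms of $w$, by the analogous expression in terms of $z$.

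The decisive observation is that the truncation only moves $z$ toward the origin: by construction $|w|\leq|z|$ and $w$ shares the sign of $z$ for $z\neq 0$, while $\tilde{\lambda}_\Delta(0)=\tilde{\sigma}_\Delta(0)=0$ disposes of $z=0$ trivially. It therefore suffices to establish the elementary inequality
\begin{flalign*}
1+e^{\alpha w}+e^{-(\beta+1)w}\leq 2\big(1+e^{\alpha z}+e^{-(\beta+1)z}\big)
\end{flalign*}
whenever $|w|\leq|z|$ and $w,z$ carry the same sign, after which one takes $C=2C_0$.

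To verify this inequality I would split on the sign of $z$. If $z>0$ then $0<w\leq z$, so monotonicity of the exponential gives $e^{\alpha w}\leq e^{\alpha z}$ (using $\alpha>0$), while $e^{-(\beta+1)w}\leq 1$ because $w>0$; hence the left side is at most $2+e^{\alpha z}\leq 2(1+e^{\alpha z}+e^{-(\beta+1)z})$. If $z<0$ the two roles swap: now $z\leq w<0$ yields $e^{-(\beta+1)w}\leq e^{-(\beta+1)z}$ and $e^{\alpha w}\leq 1$, giving the bound $2+e^{-(\beta+1)z}$. In both cases the claim follows. There is no substantial obstacle here; the only point demanding care is the sign bookkeeping, namely that truncation tames precisely the exponential that would otherwise blow up (the growing one on each half-line) while the opposite exponential, evaluated at a point on the correct side of the origin, automatically stays below $1$. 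This is exactly the mechanism that allows the $\Delta$-dependent factor $\eta(\Delta)$ in \eqref{rebbound} to be replaced by a fixed constant, which is what the subsequent optimal-rate argument relies on.
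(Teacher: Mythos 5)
Your proof is correct and takes essentially the same approach as the paper: both apply the growth bound \eqref{perserve} at the truncated point and exploit that the truncation shrinks $|z|$ while preserving its sign, so the growing exponential on each half-line is dominated by its value at $z$ and the opposite exponential stays below $1$. Your packaging of the paper's two cases $|z|\leq\phi^{-1}(\eta(\Delta))$ and $|z|>\phi^{-1}(\eta(\Delta))$ into a single monotonicity inequality for same-signed $w$ with $|w|\leq|z|$ is only a cosmetic streamlining, not a different argument.
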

	\begin{proof}
		Fix $\Delta\in(0,1]$. For $z\in\RR$ with $|z|\leq \phi^{-1}(\eta(\Delta))$, we obtain from Assumption \ref{reas.1} and Remark \ref{reRemark1} that
		\begin{equation*}
			|\tilde{\lambda}_\Delta(z)|=|\tilde{\lambda}(z)|\leq C(1+e^{\alpha z}+e^{-(\beta+1)z}).
		\end{equation*}
		Given $z\in\RR$ with $|z|>\phi^{-1}(\eta(\Delta))$, the condition  $e^z\geq1$ allows us to derive
		\begin{flalign*}
			|\tilde{\lambda}_\Delta(z)|=&|\tilde{\lambda}(\phi^{-1}(\eta(\Delta))\frac{z}{|z|})|\leq C(1+e^{\alpha\phi^{-1}(\eta(\Delta))\frac{z}{|z|}}+e^{-(\beta+1)\psi^{-1}(\eta(\Delta))\frac{z}{|z|}}) \\
			\leq&C(2+e^{\alpha\frac{\phi^{-1}(\eta(\Delta))}{|z|}z})\leq C(1+e^{\alpha z}). 
		\end{flalign*}
		The condition $e^z<1$ allows us to derive
		\begin{flalign*}
			|\tilde{\lambda}_\Delta(z)|\leq C(2+e^{-(\beta+1)\frac{\psi^{-1}(\eta(\Delta))}{|z|}z})\leq C(1+e^{-(\beta+1)z}).
		\end{flalign*}
		Therefore, it follows
		\begin{flalign*}
			|\tilde{\lambda}_\Delta(z)|\leq C(1+e^{\alpha z}+e^{-(\beta+1)z}).
		\end{flalign*}
		Similarly, it holds
		\begin{equation*}
			|\tilde{\sigma}_\Delta(z)| \leq  C(1+e^{\alpha z}+e^{-(\beta+1)z}).
		\end{equation*}
		The assertion \eqref{Das1eq} hold.
	\end{proof}
	
	\indent The above lemma implies that the truncated functions $|\tilde{\lambda}_\Delta(z)|$ and $|\tilde{\sigma}_\Delta(z)|$ are not estimated by $\eta(\Delta)$ anymore. Furthermore, the above estimates are barely achievable in multi-dimensional scenarios, precluding us from deriving its optimal convergence rate. On the contrary, using Lemma \ref{Das1}, the moment and inverse moment bounds, we re-evaluate the following estimation without the necessary infinitesimal factors $\eta(\Delta)$, rather than the estimation $C\Delta^{\frac{p}{2}}(\eta(\Delta))^{\frac{p}{2}}$ in \cite{TX2024}. This step plays a critical role in attaining the optimal strong convergence rate.
	
	\begin{lemma}\label{LM6.1}
		Let Assumption \ref{reas.1} hold with $\frac{J}{\alpha+1}\wedge\frac{K}{\beta+1}>p$ and $p\geq2$. Then for all $\Delta\in(0,1]$, there exists a constant $C$ dependent on $p$ such that 
		\begin{equation}\label{eqLM6.1}
			\sup_{ s \in[0,T]}\E\Big[\Big|\frac{y_\Delta(s)}{\bar{y}_\Delta(s)}-1\Big|^p\Big]\leq C\Delta^{\frac{p}{2}}.
		\end{equation}
	\end{lemma}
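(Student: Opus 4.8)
The plan is to follow the same skeleton as the proof of Lemma \ref{reLM3.1}, but to replace, at the decisive step, the crude truncation bound \eqref{rebbound} (which is what forces the spurious factor $\eta(\Delta)$) by the refined polynomial bound of Lemma \ref{Das1}. First I would fix $t\in[t_k,t_{k+1})$ and, applying the It\^o formula to $y_\Delta=e^{z_\Delta}$ on one step (using \eqref{anoy} and \eqref{rebxx}), record the one-step representation
$$y_\Delta(t)=\bar y_\Delta(t)+\int_{t_k}^{t}y_\Delta(s)\Big(\tilde\lambda_\Delta(\bar z_\Delta(s))+\tfrac12|\tilde\sigma_\Delta(\bar z_\Delta(s))|^2\Big)\mathrm{d}s+\int_{t_k}^{t}y_\Delta(s)\tilde\sigma_\Delta(\bar z_\Delta(s))\mathrm{d}B(s).$$
Since $\bar y_\Delta$ is constant on the step, dividing by $\bar y_\Delta(t)$, taking the $p$-th moment and invoking Theorem 1.7.1 in \cite{M01} gives
\begin{align*}
\E\Big[\Big|\tfrac{y_\Delta(t)}{\bar y_\Delta(t)}-1\Big|^p\Big]
\leq{}& C\Delta^{p-1}\E\int_{t_k}^{t}\Big|\tfrac{y_\Delta(s)}{\bar y_\Delta(s)}\Big|^p\Big|\tilde\lambda_\Delta(\bar z_\Delta(s))+\tfrac12|\tilde\sigma_\Delta(\bar z_\Delta(s))|^2\Big|^p\mathrm{d}s\\
&+C\Delta^{\frac p2-1}\E\int_{t_k}^{t}\Big|\tfrac{y_\Delta(s)}{\bar y_\Delta(s)}\Big|^p|\tilde\sigma_\Delta(\bar z_\Delta(s))|^p\mathrm{d}s.
\end{align*}

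The decisive step is the estimate of the truncated coefficients. Instead of \eqref{rebbound}, I would use Lemma \ref{Das1} together with $e^{\bar z_\Delta(s)}=\bar y_\Delta(s)$ to obtain
$$|\tilde\lambda_\Delta(\bar z_\Delta(s))|\vee|\tilde\sigma_\Delta(\bar z_\Delta(s))|^2\leq C\big(1+\bar y_\Delta(s)^{\alpha}+\bar y_\Delta(s)^{-(\beta+1)}\big),$$
whence $\big|\tilde\lambda_\Delta+\tfrac12|\tilde\sigma_\Delta|^2\big|^p\leq C\big(1+\bar y_\Delta(s)^{\alpha p}+\bar y_\Delta(s)^{-(\beta+1)p}\big)$ and $|\tilde\sigma_\Delta|^p\leq C\big(1+\bar y_\Delta(s)^{\alpha p/2}+\bar y_\Delta(s)^{-(\beta+1)p/2}\big)$. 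This is exactly the point at which $\eta(\Delta)$ disappears, being replaced by powers of $\bar y_\Delta$ that possess bounded moments.

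It then remains to control the resulting expectations. For each term I would apply the H\"older inequality to separate the factor $|y_\Delta(s)/\bar y_\Delta(s)|^p$, whose moments of every order are bounded by the first part of Lemma \ref{repp numerical integral}, from the polynomial factor in $\bar y_\Delta(s)=y_\Delta(t_k)$, whose $J$-th and $(-K)$-th moments are bounded by the second part of the same lemma. The largest exponents of $\bar y_\Delta$ arise in the drift term, namely $\bar y_\Delta^{\alpha p}$ and $\bar y_\Delta^{-(\beta+1)p}$; here the hypotheses $\tfrac{J}{\alpha+1}>p$ and $\tfrac{K}{\beta+1}>p$ yield $J>\alpha p$ and $K>(\beta+1)p$ with strict room, so a conjugate exponent $q$ slightly larger than $1$ can be chosen keeping $\alpha p q\leq J$ and $(\beta+1)pq\leq K$ (and a fortiori for the smaller diffusion-term powers). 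Consequently all integrands are bounded by a constant, so the drift contributes $C\Delta^{p-1}\!\cdot\!\Delta=C\Delta^{p}$ and the diffusion contributes $C\Delta^{\frac p2-1}\!\cdot\!\Delta=C\Delta^{\frac p2}$; since $p\geq2$ forces $\Delta^{p}\leq\Delta^{\frac p2}$ for $\Delta\in(0,1]$, the two combine to the claimed $C\Delta^{\frac p2}$, uniformly in $t\in[0,T]$.

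The main obstacle is precisely this decisive replacement: one must recognise that Lemma \ref{Das1} lets the truncated coefficients be dominated by powers of $\bar y_\Delta$ rather than by $\eta(\Delta)$, and then verify that the H\"older exponents can be chosen so that \emph{every} power of $\bar y_\Delta$ remains inside the admissible moment range dictated by $J$ and $-K$. This bookkeeping is what the combined condition $\tfrac{J}{\alpha+1}\wedge\tfrac{K}{\beta+1}>p$ is designed to guarantee, and it is the essential improvement over the argument of \cite{TX2024}, which stops at the weaker bound $C\Delta^{\frac p2}(\eta(\Delta))^{\frac p2}$.
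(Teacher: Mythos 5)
Your proposal is correct and follows essentially the same route as the paper's proof: the same one-step representation, the same moment inequality (Theorem 1.7.1 in \cite{M01}), the same decisive replacement of the crude bound \eqref{rebbound} by the polynomial estimate of Lemma \ref{Das1}, and the same H\"older separation of the ratio $y_\Delta/\bar{y}_\Delta$ from powers of $\bar{y}_\Delta$, with the condition $\frac{J}{\alpha+1}\wedge\frac{K}{\beta+1}>p$ providing exactly the slack (your exponent $q$ slightly above $1$ is the paper's $1+\xi$) needed to keep all moments within the range covered by Lemma \ref{repp numerical integral}.
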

	\begin{proof}
		Considering the one-dimensional case of Lemma \ref{reLM3.1}, one can get
		\begin{flalign}\label{ydy}
			y_{\Delta}(t)=\bar{y}_{\Delta}(t)+\int_{t_k}^{t}y_{\Delta}(s)\big(\tilde{\lambda}_\Delta(\bar{z}_\Delta(s))+\frac{1}{2}|\tilde{\sigma}_{\Delta}(\bar{z}_\Delta(s))|^2\big)\mathrm{d}s+\int_{t_k}^{t}y_{\Delta}(s)\tilde{\sigma}_{\Delta}(\bar{z}_\Delta(s))\mathrm{d}B(s).
		\end{flalign}
		By leveraging this, along with \eqref{repp numerical integral}, Lemma \ref{Das1}, H\"older's inequality, and Theorem 1.7.1 in \cite{M01}, it follows that 
		\begin{flalign*}
			&\E\Big[\Big|\frac{y_\Delta(s)}{\bar{y}_\Delta(s)}-1\Big|^p\Big]\\
			\leq&C\Delta^{p-1}\Big(\E\int_{t_k}^{s}\Big|\frac{y_\Delta(u)}{\bar{y}_\Delta(u)}\Big|^{p(1+\frac{1}{\xi})}\mathrm{d}u\Big)^{\frac{\xi}{1+\xi}}\Big(\E\int_{t_k}^{s}\Big|\tilde{\lambda}_\Delta(\bar{z}_\Delta(u))+\frac{1}{2}|\tilde{\sigma}_\Delta(\bar{z}_\Delta(u))|^2\Big|^{p(1+\xi)}\mathrm{d}u\Big)^{\frac{1}{1+\xi}}\\
			&+C\Delta^{\frac{p}{2}-1}\Big(\E\int_{t_k}^{s}\Big|\frac{y_\Delta(u)}{\bar{y}_\Delta(u)}\Big|^{p(1+\frac{1}{\xi})}\mathrm{d}u\Big)^{\frac{\xi}{1+\xi}}\Big(\E\int_{t_k}^{s}|\tilde{\sigma}_\Delta(\bar{z}_\Delta(u))|^{p(1+\xi)}\mathrm{d}u\Big)^{\frac{1}{1+\xi}}\\
			\leq&C\Delta^p(1+\E|\bar{y}_\Delta(u)|^{p\alpha(1+\xi)}+\E|\bar{y}_\Delta(u)|^{-p(\beta+1)(1+\xi)})^{\frac{1}{1+\xi}}\\
			&+C\Delta^{\frac{p}{2}}(1+\E|\bar{y}_\Delta(u)|^{\frac{p\alpha(1+\xi)}{2}}+\E|\bar{y}_\Delta(u)|^{\frac{-p(\beta+1)(1+\xi)}{2}})^{\frac{1}{1+\xi}}.
		\end{flalign*}
		Under the condition $\frac{J}{\alpha+1}\wedge\frac{K}{\beta+1}>p$, there exists $\xi>0$ such that  $\frac{J}{\alpha+1}\wedge\frac{K}{\beta+1}\geq(1+\xi)p$. It means that 
		\begin{flalign}\label{as.4}
			J\geq p(\alpha+1)(1+\xi) \quad\text{and}\quad K\geq p(\beta+1)(1+\xi).
		\end{flalign}
		Since $p\geq 2$, we have $J+K>\alpha\wedge\beta+1$, we can derive from Lemma \ref{repp numerical integral} that the assertion \eqref{eqLM6.1} holds.
	\end{proof}
	
	\begin{lemma}\label{repmain}
		Suppose Assumptions \ref{reas.1} and \ref{reas.3} hold with $\frac{J}{\alpha+1}\wedge\frac{K}{\beta+1}>p$. Given $R>|\ln y_0|$, let $\theta$ and $\theta^*$ be the stopping times defined above. Let $\Delta\in(0,1]$ be sufficiently small such that $\phi^{-1}(\eta(\Delta))\geq R$. Then we obtain
		\begin{equation*}
			\sup_{ t \in[0,T]}\E[|\tilde{W}_\Delta(t\wedge\bar{\theta})|^p]\leq C\Delta^{\frac{p}{2}}.
		\end{equation*}
	\end{lemma}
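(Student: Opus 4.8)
The plan is to run the same argument as in the proof of Lemma \ref{pmain}, but to invoke Lemmas \ref{Das1} and \ref{LM6.1} at the two decisive steps so that the spurious factor $(\eta(\Delta))^{p/2}$ disappears. Since $\phi^{-1}(\eta(\Delta))\geq R$, on the interval $s\in[0,t\wedge\bar\theta]$ we have $|\bar z_\Delta(s)|\leq R\leq\phi^{-1}(\eta(\Delta))$, so the truncated coefficients coincide with the genuine ones, $\tilde\lambda_\Delta(\bar z_\Delta(s))=\tilde\lambda(\bar z_\Delta(s))$ and $\tilde\sigma_\Delta(\bar z_\Delta(s))=\tilde\sigma(\bar z_\Delta(s))$. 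Applying the It\^o formula to $e^{z_\Delta(t\wedge\bar\theta)}=y_\Delta(t\wedge\bar\theta)$, exactly as in Lemma \ref{pmain}, recovers the integral representation
\begin{equation*}
y_\Delta(t\wedge\bar\theta)=y_0+\int_{0}^{t\wedge\bar\theta}\frac{y_\Delta(s)}{\bar y_\Delta(s)}\lambda(\bar y_\Delta(s))\,\mathrm{d}s+\int_{0}^{t\wedge\bar\theta}\frac{y_\Delta(s)}{\bar y_\Delta(s)}\sigma(\bar y_\Delta(s))\,\mathrm{d}B(s).
\end{equation*}
Differencing this against \eqref{resde} and applying the It\^o formula to $|\tilde W_\Delta|^p$, then inserting and subtracting $\lambda(y_\Delta),\sigma(y_\Delta)$ and $\lambda(\bar y_\Delta),\sigma(\bar y_\Delta)$ and using Young's inequality, produces a monotone term $\bar M_1$ and a discretization term $\bar M_2$ of precisely the same shape as in Lemma \ref{pmain}.

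For $\bar M_1$, Assumption \ref{reas.3} (used with $p<p^*$, as in the $\bar M_1$ bound of Lemma \ref{pmain}) gives immediately $\bar M_1\leq C\int_0^t\E|\tilde W_\Delta(s\wedge\bar\theta)|^p\,\mathrm{d}s$. For $\bar M_2$, a further Young step isolates $C\int_0^t\E|\tilde W_\Delta(s\wedge\bar\theta)|^p\,\mathrm{d}s$ together with the genuine discretization contributions $\E|\lambda(y_\Delta(s))-\lambda(\bar y_\Delta(s))|^p$, $\E|1-\tfrac{y_\Delta(s)}{\bar y_\Delta(s)}|^p|\lambda(\bar y_\Delta(s))|^p$, and the two analogous $\sigma$-terms. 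Applying the local Lipschitz bound of Assumption \ref{reas.1}, the growth bounds of Remark \ref{reRemark1}, and H\"older's inequality with a small exponent $\xi>0$, each of these factorises into a uniformly bounded moment factor times either $\big(\E|y_\Delta(s)-\bar y_\Delta(s)|^{(1+\xi)p/\xi}\big)^{\xi/(1+\xi)}$ or $\big(\E|y_\Delta(s)/\bar y_\Delta(s)-1|^{(1+\xi)p/\xi}\big)^{\xi/(1+\xi)}$.

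The two estimates that remove $\eta(\Delta)$ are the crux. First, Lemma \ref{LM6.1} already supplies $\E|y_\Delta/\bar y_\Delta-1|^q\leq C\Delta^{q/2}$ for every admissible exponent $q$. Second, starting from \eqref{ydy} and using Theorem 1.7.1 in \cite{M01} together with Lemma \ref{Das1} — which bounds $|\tilde\lambda_\Delta|$ and $|\tilde\sigma_\Delta|^2$ by $C(1+e^{\alpha\bar z_\Delta}+e^{-(\beta+1)\bar z_\Delta})=C(1+\bar y_\Delta^{\alpha}+\bar y_\Delta^{-(\beta+1)})$ instead of by $\eta(\Delta)$ — the drift contribution scales like $\Delta^{q}$ and the diffusion contribution like $\Delta^{q/2}$, so that $\E|y_\Delta-\bar y_\Delta|^q\leq C\Delta^{q/2}$ with no $\eta(\Delta)$ present. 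Here the hypothesis $\frac{J}{\alpha+1}\wedge\frac{K}{\beta+1}>p$ is exactly what permits choosing $\xi>0$ with $(1+\xi)p\leq\frac{J}{\alpha+1}\wedge\frac{K}{\beta+1}$, whence $J\geq(\alpha+1)(1+\xi)p$ and $K\geq(\beta+1)(1+\xi)p$, guaranteeing that every positive- and inverse-moment factor arising after H\"older is finite and uniformly bounded by Lemma \ref{repp numerical integral}. Raising the $\Delta^{(1+\xi)p/(2\xi)}$ estimates to the power $\xi/(1+\xi)$ converts them into $\Delta^{p/2}$, so $\bar M_2\leq C\int_0^t\E|\tilde W_\Delta(s\wedge\bar\theta)|^p\,\mathrm{d}s+C\Delta^{p/2}$, and the Gr\"onwall inequality closes the argument.

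I expect the main obstacle to be the bookkeeping of the H\"older exponents and the power-counting in the $\E|y_\Delta-\bar y_\Delta|^q$ estimate, namely verifying that replacing the crude bound $|\tilde\lambda_\Delta|\vee|\tilde\sigma_\Delta|^2\leq\eta(\Delta)$ by the sharper Lemma \ref{Das1} does not demand more integrability than $\frac{J}{\alpha+1}\wedge\frac{K}{\beta+1}>p$ provides, so that the $\eta(\Delta)$ dependence genuinely cancels while all moment factors stay finite. The monotonicity step via Assumption \ref{reas.3} and the final Gr\"onwall closure are routine.
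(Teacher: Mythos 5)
Your proposal follows essentially the same route as the paper's proof: the same It\^o/Young decomposition into a monotone term controlled by Assumption \ref{reas.3} and a discretization term, the same H\"older factorization with $\xi$ chosen via the hypothesis $\frac{J}{\alpha+1}\wedge\frac{K}{\beta+1}>p$ (i.e.\ \eqref{as.4}) so that all moment factors are covered by Lemma \ref{repp numerical integral}, the same use of Lemma \ref{Das1} in place of the crude bound $\eta(\Delta)$ inside the $\E|y_\Delta-\bar y_\Delta|^{(1+\xi)p/\xi}$ estimate, Lemma \ref{LM6.1} for the ratio term, and a final Gr\"onwall step. The exponent bookkeeping you flag as the main obstacle is indeed the only delicate point, and the paper resolves it exactly as you anticipate, via a second H\"older application that shifts the large power $(1+\xi)p/\xi$ onto quantities bounded for arbitrary exponents while the coefficient functions only carry the power $(1+\xi)p$ admissible under \eqref{as.4}.
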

	\begin{proof}
		For $s\in[0,t\wedge\bar{\theta}]$, we observe that $|\bar{y}_\Delta(s)|\leq R$. Due to the assumption $\phi^{-1}(\eta(\Delta))\geq R$, it follows that  $\tilde{\lambda}_\Delta(\bar{y}_\Delta(s))=\tilde{\lambda}(\bar{y}_\Delta(s))$ and $\tilde{\sigma}_\Delta(\bar{y}_\Delta(s))=\tilde{\sigma}(\bar{y}_\Delta(s))$ for $s\in[0,t\wedge\bar{\theta}]$. 
		It is similar to the one-dimensional case of Lemma \ref{pmain}, thus we obtain
		\begin{flalign*}
			\E[|\tilde{W}_\Delta(t\wedge\bar{\theta})|^p]=&p\E\int_{0}^{t\wedge\bar{\theta}}|\tilde{W}_\Delta(s)|^{p-2}\tilde{W}_\Delta(s)\big(\lambda(y(s))-\frac{y_\Delta(s)}{\bar{y}_\Delta(s)}\lambda(\bar{y}_\Delta(s))\big)\mathrm{d}s\\
			&+\frac{p(p-1)}{2}\E\int_{0}^{t\wedge\bar{\theta}}|\tilde{W}_\Delta(s)|^{p-2}\Big|\sigma(y(s))-\frac{y_\Delta(s)}{\bar{y}_\Delta(s)}\sigma(\bar{y}_\Delta(s))\Big|^2\mathrm{d}s\\
			\leq&\hat{I_1}+\hat{I_2},
		\end{flalign*}
		where 
		\begin{flalign*}
			\hat{I_1}=&p\E\int_{0}^{t\wedge\bar{\theta}}|\tilde{W}_\Delta(s)|^{p-2}\Big(\tilde{W}_\Delta(s)\big(\lambda(y(s))-\lambda(y_\Delta(s))\big)+\frac{p^*-1}{2}|\sigma(y(s))-\sigma(y_\Delta(s))|^2\Big)\mathrm{d}s
		\end{flalign*}
		and
		\begin{flalign*}
			\hat{I_2}=&p\int_{0}^{t\wedge\bar{\theta}}|\tilde{W}_\Delta(s)|^{p-2}\tilde{W}_\Delta(s)\big(\lambda(y_\Delta(s))-\frac{y_\Delta(s)}{\bar{y}_\Delta(s)}\lambda(\bar{y}_\Delta(s))\big)\mathrm{d}s\\
			&+\frac{p(p-1)(p^*-1)}{2(p^*-p)}\int_{0}^{t\wedge\bar{\theta}}|\tilde{W}_\Delta(s)|^{p-2}\Big|\sigma(y_\Delta(s))-\frac{y_\Delta(s)}{\bar{y}_\Delta(s)}\sigma(\bar{y}_\Delta(s))\Big|^2\mathrm{d}s.
		\end{flalign*}
		Here the Young inequality is used. Under Assumption \ref{reas.3}, we obtain
		$\hat{I_1}\leq C\int_{0}^{t}\E|\tilde{W}_\Delta(s\wedge\bar{\theta})|^{p}\mathrm{d}s$, and derive from the Young inequality that
		\begin{flalign*}
			\hat{I_2}\leq&C\E\int_{0}^{t\wedge\bar{\theta}}|\tilde{W}_\Delta(s)|^{p-1}\Big|\lambda(y_\Delta(s))-\lambda(\bar{y}_\Delta(s))+\lambda(\bar{y}_\Delta(s))-\frac{y_\Delta(s)}{\bar{y}_\Delta(s)}\lambda(\bar{x}_\Delta(s))\Big|\mathrm{d}s\\
			&+C\E\int_{0}^{t\wedge\bar{\theta}}|\tilde{W}_\Delta(s)|^{p-2}\Big|\sigma(y_\Delta(s))-\sigma(\bar{y}_\Delta(s))+\sigma(\bar{y}_\Delta(s))-\frac{y_\Delta(s)}{\bar{y}_\Delta(s)}\sigma(\bar{y}_\Delta(s))\Big|^2\mathrm{d}s\\
			\leq&C\E\int_{0}^{t\wedge\bar{\theta}}|\tilde{W}_\Delta(s)|^p\mathrm{d}s+C\E\int_{0}^{t\wedge\bar{\theta}}\Big(|\lambda(y_\Delta(s))-\lambda(\bar{y}_\Delta(s))|^p+|1-\frac{y_\Delta(s)}{\bar{y}_\Delta(s)}|^p|\lambda(\bar{y}_\Delta(s))|^p\Big)\mathrm{d}s\\
			&+C\E\int_{0}^{t\wedge\bar{\theta}}\Big(|\sigma(y_\Delta(s))-\sigma(\bar{y}_\Delta(s))|^p+|1-\frac{y_\Delta(s)}{\bar{y}_\Delta(s)}|^p|\sigma(\bar{y}_\Delta(s))|^p\Big)\mathrm{d}s.
		\end{flalign*}
		Using Assumption \ref{reas.1}, Remark \ref{reRemark1} and the H\"older inequality, we obtain 
		\begin{flalign*}
			\hat{I_2}\leq& C\int_{0}^{t}\E|\tilde{W}_\Delta(s\wedge\bar{\theta})|^p\mathrm{d}s+C\int_{0}^{T}\Big(\E[1+|y_\Delta(s)|^{\alpha(1+\xi)p}+|\bar{y}_\Delta(s)|^{\alpha(1+\xi)p}+|y_\Delta(s)|^{-\beta(1+\xi)p}\\
			&+|\bar{y}_\Delta(s)|^{-\beta(1+\xi)p}]\Big)^{\frac{1}{1+\xi}}\big(\E|y_\Delta(s)-\bar{y}_\Delta(s)|^{\frac{(1+\xi)p}{\xi}}\big)^{\frac{\xi}{1+\xi}}\mathrm{d}s\\
			&+C\int_{0}^{T}\Big(\E\Big|1-\frac{y_\Delta(s)}{\bar{y}_\Delta(s)}\Big|^{\frac{(1+\xi)p}{\xi}}\Big)^{\frac{\xi}{1+\xi}}\big(\E[1+|\bar{y}_\Delta(s)|^{(\alpha+1)(1+\xi)p}+|\bar{y}_\Delta(s)|^{-\beta(1+\xi)p}]\big)^{\frac{1}{1+\xi}}\mathrm{d}s\\
			&+C\int_{0}^{T}\Big(\E\Big|1-\frac{y_\Delta(s)}{\bar{y}_\Delta(s)}\Big|^{\frac{(1+\xi)p}{\xi}}\Big)^{\frac{\xi}{1+\xi}}\big(\E[1+|\bar{y}_\Delta(s)|^{\frac{(\alpha+2)(1+\xi)p}{2}}+|\bar{y}_\Delta(s)|^{\frac{-(\beta-1)(1+\xi)p}{2}}]\big)^{\frac{1}{1+\xi}}\mathrm{d}s.
		\end{flalign*}
		By \eqref{ydy}, \eqref{as.4}, Lemmas \ref{repp numerical integral} and \ref{Das1}, the H\"older inequality and Theorem 1.7.1 in \cite{M01}, we have
		\begin{flalign*}
			&\E[|y_\Delta(s)-\bar{y}_\Delta(s)|^{\frac{(1+\xi)p}{\xi}}]\\
			\leq& C\Delta^{{\frac{(1+\xi)p}{\xi}}-1}\E\int_{t_k}^{s}|y_\Delta(u)|^{\frac{(1+\xi)p}{\xi}}\Big|\tilde{\lambda}(\bar{z}_\Delta(u))+\frac{1}{2}|\tilde{\sigma}(\bar{z}_\Delta(u))|^2\Big|^{\frac{(1+\xi)p}{\xi}}\mathrm{d}u\\
			&+C\Delta^{\frac{(1+\xi)p}{2\xi}-1}\E\int_{t_k}^{s}|y_\Delta(u)|^{\frac{(1+\xi)p}{\xi}}|\tilde{\sigma}(\bar{y}_\Delta(u))|^{\frac{(1+\xi)p}{\xi}}\mathrm{d}u\\
			\leq&C\Delta^{{\frac{(1+\xi)p}{\xi}}-1}\Big(\E\int_{t_k}^{s}|y_\Delta(u)|^{\frac{(\xi+1)p}{\xi-1}}\mathrm{d}u\Big)^{\frac{\xi-1}{\xi}}\Big(\E\int_{t_k}^{s}\Big|\tilde{\lambda}(\bar{z}_\Delta(u))+\frac{1}{2}|\tilde{\sigma}(\bar{z}_\Delta(u))|^2\Big|^{(1+\xi)p}\mathrm{d}u\Big)^{\frac{1}{\xi}}\\
			&+C\Delta^{\frac{(1+\xi)p}{2\xi}-1}\Big(\E\int_{t_k}^{s}|y_\Delta(u)|^{\frac{(\xi+1)p}{\xi-1}}\mathrm{d}u\Big)^{\frac{\xi-1}{\xi}}\Big(\E\int_{t_k}^{s}|\tilde{\sigma}(\bar{z}_\Delta(u))|^{(1+\xi)p}\mathrm{d}u\Big)^{\frac{1}{\xi}}\\
			\leq&C\Delta^{{\frac{(1+\xi)p}{\xi}}}(1+\E|\bar{y}_\Delta(u)|^{p\alpha(1+\xi)}+\E|\bar{y}_\Delta(u)|^{-p(\beta+1)(1+\xi)})^{\frac{1}{\xi}}\\
			&+C\Delta^{\frac{(1+\xi)p}{2\xi}}(1+\E|\bar{y}_\Delta(u)|^{\frac{p\alpha(1+\xi)}{2}}+\E|\bar{y}_\Delta(u)|^{\frac{-p(\beta+1)(1+\xi)}{2}})^{\frac{1}{\xi}}\\
			\leq&C\Delta^{\frac{(1+\xi)p}{2\xi}}.
		\end{flalign*}
		With the aid of \eqref{as.4}, Lemmas \ref{repp numerical integral} and \ref{LM6.1} can be used to yield that
		\begin{flalign}
			\hat{I_2}\leq C\int_{0}^{t}\E|\tilde{W}_\Delta(s\wedge\bar{\theta})|^p\mathrm{d}s+C\Delta^{\frac{p}{2}}.
		\end{flalign}
		Finally, the Gr\"onwall inequality implies that Lemma \ref{repmain} holds.
	\end{proof}
	
	\begin{theorem}\label{remain}
	Suppose the conditions of Lemma \ref{repmain} are satisfied. If 
		\begin{flalign}\label{rehDi}
			\eta(\Delta)\geq \phi\Big(-\frac{Jp\ln\Delta}{2(J-p)(J\wedge K)}\Big)
		\end{flalign}
		holds for all sufficiently small $\Delta\in(0,1]$, then we have
		\begin{equation*}
			\sup_{ t \in[0,T]}\E[|\tilde{W}_\Delta(t)|^p]\leq C\Delta^{\frac{p}{2}}
		\end{equation*}
		for any fixed $T=N\Delta>0$.
	\end{theorem}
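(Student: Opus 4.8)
The plan is to follow the architecture of the proof of Theorem \ref{main}, but to feed in the sharpened estimate of Lemma \ref{repmain}, which already carries the factor $\Delta^{p/2}$ rather than $\Delta^{p/2}(\eta(\Delta))^{p/2}$; this is exactly where the gain over \cite{TX2024} originates. First I would split the error according to whether the joint stopping time $\bar{\theta}=\theta\wedge\theta^*$ is reached before time $T$:
\begin{flalign*}
\sup_{t\in[0,T]}\E[|\tilde{W}_\Delta(t)|^p]=\sup_{t\in[0,T]}\E[|\tilde{W}_\Delta(t)|^pI_{\{\bar{\theta}>T\}}]+\sup_{t\in[0,T]}\E[|\tilde{W}_\Delta(t)|^pI_{\{\bar{\theta}\leq T\}}]=:I_1+I_2.
\end{flalign*}
On $\{\bar{\theta}>T\}$ one has $t\wedge\bar{\theta}=t$, so Lemma \ref{repmain} bounds $I_1$ by $C\Delta^{p/2}$ directly, provided its hypothesis $\phi^{-1}(\eta(\Delta))\geq R$ is in force.

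The substance of the argument is the tail term $I_2$. I would insert a free parameter $\rho>0$ and apply the Young inequality with the conjugate exponents $J/p$ and $J/(J-p)$ (legitimate since $J>p(\alpha+1)>p$):
\begin{flalign*}
I_2=\sup_{t\in[0,T]}\E[|\tilde{W}_\Delta(t)|^p\rho^{p/J}I_{\{\bar{\theta}\leq T\}}\rho^{-p/J}]\leq\frac{p}{J}\sup_{t\in[0,T]}\E[|\tilde{W}_\Delta(t)|^J]\rho+\frac{J-p}{J}\PP(\bar{\theta}\leq T)\rho^{-p/(J-p)}.
\end{flalign*}
The $J$-th moment of $\tilde{W}_\Delta=y-y_\Delta$ is uniformly bounded through Lemmas \ref{Lm4.1} and \ref{repp numerical integral}, while the same computation as in Remark \ref{tau*} and Corollary \ref{Corollary2}, now invoking those two lemmas, yields $\PP(\bar{\theta}\leq T)\leq\PP(\theta\leq T)+\PP(\theta^*\leq T)\leq Ce^{-(J\wedge K)R}$. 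Consequently $I_2\leq C\rho+Ce^{-(J\wedge K)R}\rho^{-p/(J-p)}$.

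The decisive step is the joint calibration of $\rho$ and $R$. I would take
\begin{flalign*}
\rho=\Delta^{\frac{p}{2}}\quad\text{and}\quad R=-\frac{Jp\ln\Delta}{2(J-p)(J\wedge K)},
\end{flalign*}
after which $e^{-(J\wedge K)R}=\Delta^{\frac{Jp}{2(J-p)}}$ and $\rho^{-p/(J-p)}=\Delta^{-\frac{p^2}{2(J-p)}}$, so that their product telescopes to $\Delta^{\frac{p(J-p)}{2(J-p)}}=\Delta^{p/2}$ and hence $I_2\leq C\Delta^{p/2}$. It then remains to confirm that this $R$ is admissible for Lemma \ref{repmain}: the requirement $\phi^{-1}(\eta(\Delta))\geq R$ is, by monotonicity of $\phi^{-1}$, equivalent to $\eta(\Delta)\geq\phi(R)$, which is precisely hypothesis \eqref{rehDi}; moreover $R\to\infty$ as $\Delta\to0$, so $R>|\ln y_0|$ for all sufficiently small $\Delta$ and the stopping times are defined as intended. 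Adding $I_1$ and $I_2$ then delivers the claimed bound.

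Within the proof of the theorem itself the only delicate point is this simultaneous tuning: $\rho$ and $R$ must be chosen so that the two contributions to $I_2$ balance at the \emph{same} power $\Delta^{p/2}$, and the resulting $R$ must remain compatible with \eqref{rehDi}. I emphasise that the substantive work of removing the factor $\eta(\Delta)$ has already been absorbed into Lemmas \ref{Das1}, \ref{LM6.1} and \ref{repmain} — replacing the crude bound $|\tilde{\lambda}_\Delta|\vee|\tilde{\sigma}_\Delta|^2\leq\eta(\Delta)$ by the polynomial-in-$e^{\pm z}$ control of Lemma \ref{Das1} and absorbing the ensuing powers of $\bar{y}_\Delta$ into the moment and inverse-moment bounds, which is exactly why the constraint $\frac{J}{\alpha+1}\wedge\frac{K}{\beta+1}>p$ is imposed. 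Had $I_1$ instead carried the factor $\eta(\Delta)$, as in the multi-dimensional analogue Lemma \ref{reLM3.1}, the identical calibration would reproduce only the suboptimal rate $C\Delta^{p/2}(\eta(\Delta))^{p/2}$ of \cite{TX2024}; it is precisely the $\eta$-free estimate that upgrades the conclusion to the optimal $C\Delta^{p/2}$.
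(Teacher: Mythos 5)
Your proposal is correct and follows essentially the same route as the paper's own proof: the identical decomposition via $I_{\{\bar{\theta}>T\}}$ and $I_{\{\bar{\theta}\leq T\}}$, the Young inequality with conjugate exponents $J/p$ and $J/(J-p)$, the probability bound $\PP(\bar{\theta}\leq T)\leq Ce^{-(J\wedge K)R}$ from the stopped moment bounds, and the same calibration $\rho=\Delta^{p/2}$, $R=-\frac{Jp\ln\Delta}{2(J-p)(J\wedge K)}$. Your explicit verification that the two contributions to $I_2$ balance at $\Delta^{p/2}$ and that this $R$ is compatible with \eqref{rehDi} is a welcome addition that the paper leaves implicit.
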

	\begin{proof}
		We first perform the following decomposition
		\begin{flalign*}
		\sup_{ t \in[0,T]}\E[|\tilde{W}_\Delta(t)|^p]=\sup_{ t \in[0,T]}\E[|\tilde{W}_\Delta(t)|^pI_{\{\bar{\theta}>T\}}]+\sup_{ t \in[0,T]}\E[|\tilde{W}_\Delta(t)|^pI_{\{\bar{\theta\leq T}\}}]=:\bar{I}_1+\bar{I}_2.
		\end{flalign*}
		Using the Young inequality, Lemmas \ref{Lm4.1} and \ref{repp numerical integral} yields that
		\begin{flalign*}
			\bar{I}_2=&\sup_{ t \in[0,T]}\E[|\tilde{W}_\Delta(t)|^p\delta^{\frac{p}{J}}I_{\{\bar{\theta}\leq T\}}\delta^{-\frac{p}{J}}]\\
			\leq& \frac{p}{J}\sup_{ t \in[0,T]}\E|\tilde{W}_\Delta(t)|^{J}\delta+\frac{J-p}{J}\PP(\bar{\theta}\leq T)\delta^{-\frac{p}{J-p}}\\
			\leq&C\delta +C(\PP(\theta\leq T)+\PP(\theta^*\leq T))\delta^{-\frac{p}{J-p}}\\
			\leq&C\delta+C\Big(\frac{\E[|y(T\wedge\bar{\theta})|^{J}]+\E[|y(T\wedge\bar{\theta})|^{-K}]}{e^{(J\wedge K)R}}+\frac{\E[|y_\Delta(T\wedge\bar{\theta})|^{J}]+\E[|y_\Delta(T\wedge\bar{\theta})|^{-K}]}{e^{(J\wedge K)R}}\Big)\delta^{-\frac{p}{J-p}}\\
			\leq&C\delta+Ce^{-(J\wedge K)R}\delta^{-\frac{p}{J-p}}.
		\end{flalign*}
		Choosing 
		\begin{flalign*}
			\delta = \Delta^{\frac{p}{2}}\quad\text{and}\quad R=-\frac{Jp\ln\Delta}{2(J-p)(J\wedge K)},
		\end{flalign*}
		we have 
		\begin{flalign}
			\bar{I}_2\leq C\Delta^{\frac{p}{2}}.
		\end{flalign}
		Using Lemma \ref{repmain}, we obtain
		\begin{flalign*}
			\bar{I}_1\leq C\Delta^{\frac{p}{2}}.
		\end{flalign*}
		The proof is therefore completed. 
	\end{proof}

	\section{Numerical examples}
	In this section, we will explore one example and present simulations to demonstrate the advantages and efficiency of our new results. Before discussing the numerical examples, it is necessary to present the following specifications. The expression for evaluating the strong convergence error in the $L^1(\Omega)$-norm is as follows: 
	\begin{equation*}
		\E[|y(T)-y_T| ]= \frac{1}{M}\sum_{i=1}^{M}\Big|y^i(T) - y^i_T\Big|,
	\end{equation*}
	where $T$ is the terminal time and $M$ represent the number of trajectories, while $y^i(T)$ and $y^i_T$ present the $i$-th exact solution and numerical solution, respectively. Throughout our numerical experiments, the reference solution is generated via the LTEM method with a step size of $\Delta = 2^{-17}$. To investigate convergence rates, we compute numerical solutions with different step sizes of $\Delta = 2^{-14}, 2^{-13}, 2^{-12}, 2^{-11}$ and $2^{-10}$.\\
	\indent We proceed the continuity of Example \ref{Example1}. Take the initial value $y_0=(y_1(0), y_2(0))^T = (1,2)^T$ and other parameters as Example \ref{Example1}. As shown in Figure \ref{figure1}, the strong convergence order of the LTEM method is close to 1, which beyonds theoretical result in Theorem \ref{main}. Actually, by using the logarithmic transformation, the noise of the transformed SDE becomes additive. Therefore, the result of the first order is predictable.\\
	\indent Besides, we take the parameters as follows: $y_1(0) = 1, y_2(0) = 2,b_1 = 10, b_2 =6,a_{11}=-10,a_{22}=-8,\sigma_1=3$ and $\sigma_2= 2 $, with all other unspecified parameters set to zero. We generate 10 trajectories of the numerical solutions using both the truncated EM and LTEM methods with the step size $\Delta=2^{-5}$ over the time interval $[0, 2]$. In Figure \ref{figure2}, the numerical solutions generated by the truncated EM method exhibit negative values. In contrast, the LTEM method ensures that the values remain positive at all times. 
	\begin{figure}[htbp]
		\centering
		\includegraphics[width=11cm,height=8cm]{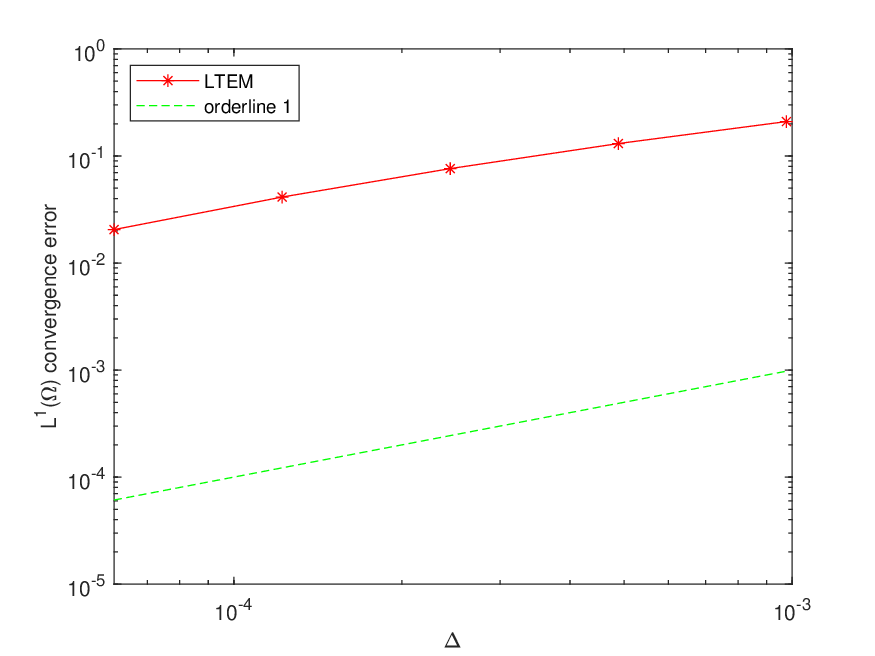}		
		\caption{Convergence rate }
		\label{figure1}
	\end{figure}
	
	\begin{figure}[htbp]
		\centering
		\begin{minipage}{0.4\linewidth}
			\centering
			\includegraphics[height = 5cm, width=5cm]{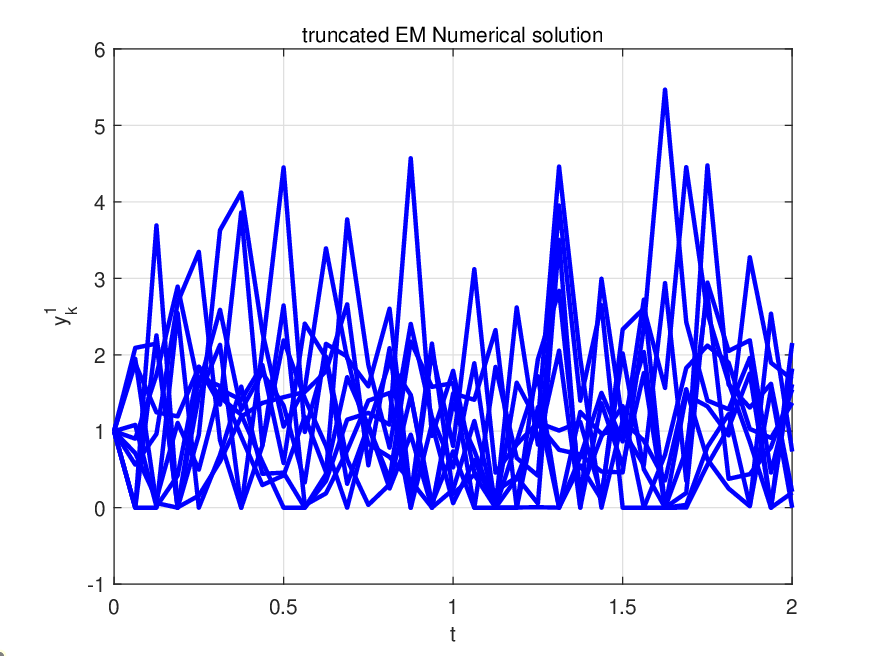}
		\end{minipage}
		\begin{minipage}{0.4\linewidth}
			\centering
			\includegraphics[height = 5cm, width=5cm]{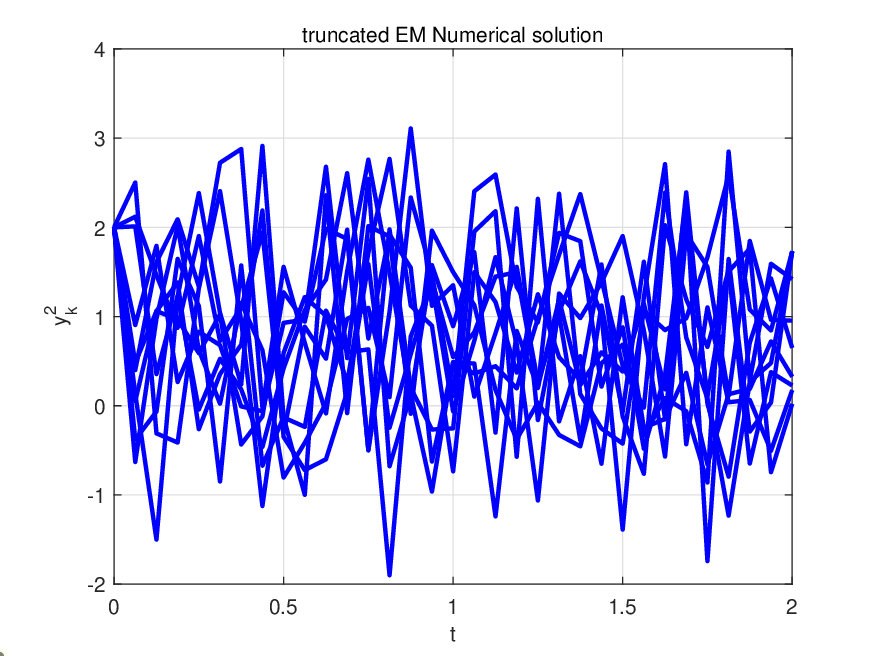}
		\end{minipage}
		\begin{minipage}{0.4\linewidth}
			\centering
			\includegraphics[height = 5cm, width=5cm]{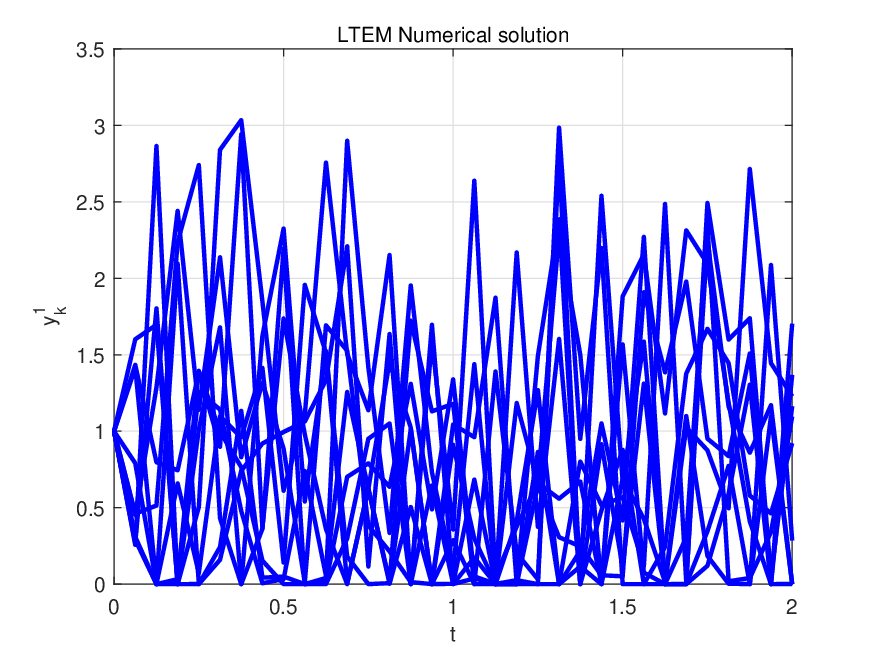}
		\end{minipage}
		\begin{minipage}{0.4\linewidth}
			\centering
			\includegraphics[height = 5cm, width=5cm]{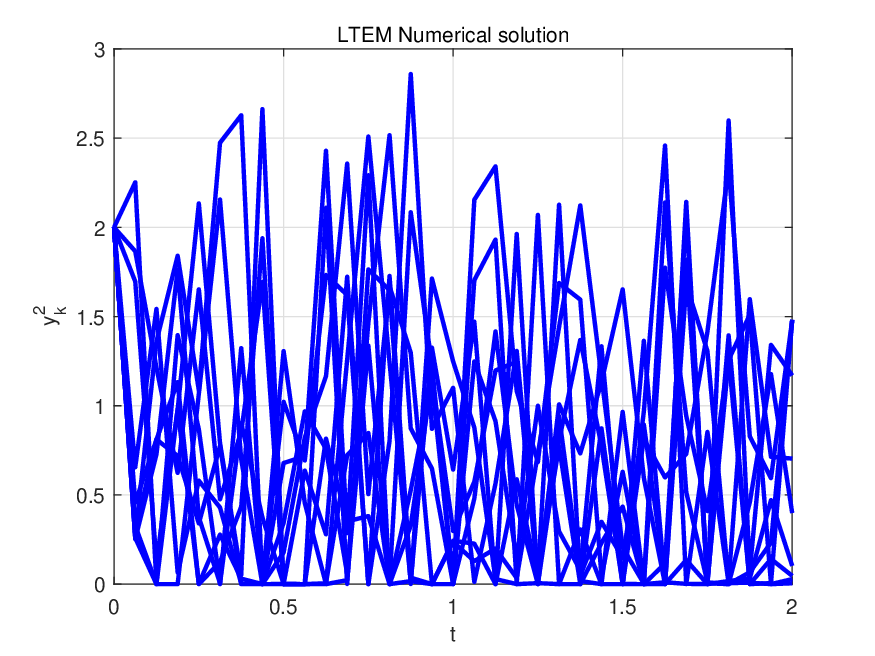}
		\end{minipage}
		\caption{10 trajectories of numerical solutions $y_k^1$ and $y_k^2$ generated by the truncated EM and LTEM methods for the stochastic LV model, using a step size $\Delta=2^{-6}$ and $T=2$.}
		\label{figure2}
	\end{figure}
	
	\begin{example}\label{Example2}
		Consider the $3$-dimensional Lotka--Volterra system 
		\begin{flalign}\label{LV3eq}
			&\mathrm{d}y_1(t)=\big(50y_1(t)-55y^2_1(t)\big)\mathrm{d}t+y_1(t)\Big(7+\frac{\sin(y_1(t))+\sin(y_2(t))+\sin(y_3(t))}{1+y_1(t)+y_2(t)+y_3(t)}\Big)\mathrm{d}B(t),\nonumber\\
			&\mathrm{d}y_2(t)=\big(30y_2(t)-10y^2_2(t)\big)\mathrm{d}t+y_2(t)\Big(2+\frac{y_1(t)+y_2(t)+y_3(t)}{1+(y_1(t)+y_2(t)+y_3(t))^2}\Big)\mathrm{d}B(t),\\
			&\mathrm{d}y_3(t)=\big(20y_3(t)-15y^2_3(t)\big)\mathrm{d}t+y_3(t)\Big(5+\frac{\cos(y_1(t))+\cos(y_2(t))}{1+y_3^2(t)}\Big)\mathrm{d}B(t)\nonumber
		\end{flalign}
		with $y_1(0) = 0.5, y_2(0) = 2,y_3(0) = 1$. From Example \ref{Example1}, we can verify that Assumptions \ref{as.1} and \ref{as.3} hold with $\alpha=1$ and $\beta=0$.\\
		\indent By the It\^o formula, we get 
		\begin{flalign*}
			&\mathrm{d}z_1(t)=\big(50-50e^{z_1(t)}-0.5N_1^2\big)\mathrm{d}t+N_1\mathrm{d}B(t),\nonumber\\
			&\mathrm{d}z_2(t)=\big(30-10e^{z_2(t)}-0.5N_2^2\big)\mathrm{d}t+N_2\mathrm{d}B(t),\\
			&\mathrm{d}z_3(t)=\big(20-15e^{z_3(t)}-0.5N_3^2\big)\mathrm{d}t+N_3\mathrm{d}B(t),\nonumber
		\end{flalign*}
		where $N_1=\Big(7+\frac{\sin(e^{z_1(t)}+\sin(e^{z_2(t)}+\sin(e^{z_3(t)}}{1+e^{z_13(t)}+e^{z_2(t)}+e^{z_3(t)}}\Big)$, $N_2=\Big(2+\frac{e^{z_1(t)}+e^{z_2(t)}+e^{z_3(t)}}{1+(e^{z_1(t)}+e^{z_2(t)}+e^{z_3(t)})^2}\Big)$ and $N_3=\Big(5+\frac{\cos(e^{z_1(t)})+\cos(e^{z_2(t)})}{1+e^{z_3(t)}}\Big)$. We define the function $\psi(r)=50e^{r}$, for which the corresponding inverse function is given by $\psi^{-1}(r)=\ln\frac{r}{50}$. We also define $\eta(\Delta)=50\Delta^{-0.5}$, which satisfies the condition stated in \eqref{hD}. In Figure \ref{figure3}, we observe that the strong convergence order of the LTEM method is close to 1/2, which consists with theoretical result in Theorem \ref{main}. Furthermore, we observe that our method consistently preserves positivity as shown in Table \ref{tab}. \\
		\indent By combining Figure \ref{figure2} and Table \ref{tab}, we can see that the LTEM method is better at preserving positivity than the truncated EM method.
		\begin{figure}[htbp]
			\centering
			\includegraphics[width=11cm,height=8cm]{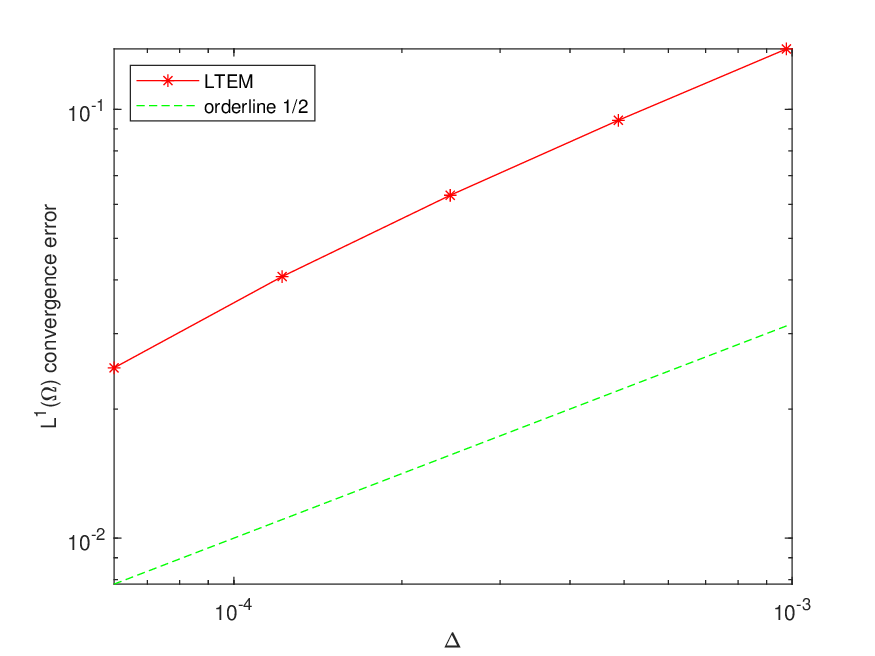}		
			\caption{Convergence rate }
			\label{figure3}
		\end{figure}
	\end{example}
	
	\begin{table}[htbp]
		\centering
		\renewcommand{\arraystretch}{1.2}
		\caption{The percentages of non-positive numerical values of $y_k^1$, $y_k^2$ and $y_k^3$ produced by the truncated EM and LTEM methods, using different \(T\) and \(\Delta\), are based on \(10^5\) sample paths for Example \ref{Example2}.}  
		\begin{center}  
			\begin{tabular}{c c c c c}
				\toprule[1.2pt]
				Solution  &Time \qquad & $\Delta$\hspace{1em} \qquad &Truncated EM  \hspace{1em}\qquad & LTEM \qquad \\
				\hline
				$y_k^1$\hspace{2em} & \tabincell{c}{ $T=2$ \\ $T=4$ \\ $T=8$ } & \tabincell{c}{$2^{-11}$\\$2^{-10}$\\$2^{-9}$} & \tabincell{c}{0\\0.83\\62.31} & \tabincell{c}{0\\0\\0} \\
				\hline
				$y_k^2$\hspace{2em} & \tabincell{c}{  $T=2$ \\ $T=4$ \\ $T=8$ } & \tabincell{c}{$2^{-11}$\\$2^{-10}$\\$2^{-9}$} & \tabincell{c}{0\\0\\0.64} & \tabincell{c}{0\\0\\0}\\
				\hline
				$y_k^3$\hspace{2em} & \tabincell{c}{ $T=2$ \\ $T=4$ \\ $T=8$ } & \tabincell{c}{$2^{-11}$\\$2^{-10}$\\$2^{-9}$} & \tabincell{c}{0\\0\\1.05} & \tabincell{c}{0\\0\\0}\\
				\bottomrule[1.2pt]   
			\end{tabular}
			\label{tab}
		\end{center}
	\end{table}
	
	\section{Conclusion}
	\indent In this paper, we focus on the LTEM method for the general SDEs with positive solutions. The primary result of this paper is that we have successfully extended the LTEM method to the multi-dimensional setting and derived its suboptimal convergence rate; in other words, the proposed method is now capable of solving general multi-dimensional SDEs with positive solutions and its convergence rate close to 1/2. Secondly, by eliminating unnecessary infinitesimal factors $\eta(\Delta)$, we achieve a theoretical enhancement in the strong convergence rate of the LTEM method in one-dimensional case, elevating it from suboptimal to optimal. Finally, the numerical results align with our theoretical conclusions, confirming both the positivity-preserving property and the strong convergence rate of the LTEM method.\\
	
	\noindent\small\textbf{Author Contribution}~~Xingwei Hu: Data curation, Formal analysis, Investigation, Methodology, Software, Visualization, Writing -- original draft, Writing -- review \& editing. Xinjie Dai: Conceptualization, Formal analysis, Funding acquisition, Supervision, Validation, Writing -- review \& editing. Aiguo Xiao: Conceptualization, Formal analysis, Funding acquisition, Methodology, Project administration, Resources, Supervision, Validation, Writing -- original draft, Writing -- review \& editing.\\
	\\
	\noindent\small\textbf{Data Availability}~~Data sharing is not applicable to this article as no datasets were generated or analyzed during the current study.
	
	\section*{Declarations}
	\noindent\small\textbf{Competing Interests }~~The authors declare no competing interests.

	%

	\end{document}